\documentclass[a4paper,10pt]{article}
\usepackage[utf8]{inputenc}
\usepackage{epsfig}
\usepackage[centertags]{amsmath}
\usepackage{amssymb, textcomp}
\usepackage{amsthm}
\usepackage{url}
\usepackage{comment}

\newtheorem{lemma}{Lemma}[section]
\newtheorem{corollary}[lemma]{Corollary}

\newtheorem{remark}[lemma]{Remark}

\newtheorem{proposition}[lemma]{Proposition}
\newtheorem{theorem}[lemma]{Theorem}

\def\dps{\displaystyle}

\def\al{\alpha}

\def\eps{\varepsilon}

\def\R{\mathbb R}

\def\CC{\mathbb C}

\def\cX{\mathcal X}
\def\EE{\mathcal E}
\def\CCC{\mathcal C}

\def\DDD{\mathcal D}

\def\MM{\mathcal M}

\def\FF{\mathcal F}

\def\GG{\mathcal G}

\def\XX{\mathcal X}
\def\RRR{\mathcal R}
\def\AAA{\mathcal A}

\def\TTT{\mathcal T}

\def\OO{\mathcal O}
\def\KK{\mathcal K}
\def \SSS{\mathcal S}

\def\B{\mathcal B}
\def\tB{\mathtt B}
\def\tA{\mathtt A}
\def\tC{\mathtt C}
\def\tD{\mathtt D}

\def\~{\tilde}

\def\de{\delta}
\def\ii{i}
\def\rr{\rho}

\def\wt{\widetilde}

\def\wh{\widehat}
\def\kk{\kappa}
\def\eps{\varepsilon}
\def\pa{\partial}

\def\d{\delta}
%opening
\def\max{\mathrm{max}}

\newcommand{\xb}{\bar{x}}
\newcommand{\yb}{\bar{y}}
\newcommand{\zb}{\bar{z}}
\newcommand{\tb}{\bar{t}}

\newcommand{\het}{z_0}
\newcommand{\uns}{\mathrm{u}}
\newcommand{\sta}{\mathrm{s}}

\newcommand{\h}{h_0}

\title{Breakdown of heteroclinic connections in the analytic Hopf-Zero singularity: Rigorous computation of the Stokes constant}

\author{Inmaculada Baldom\'a$^{1, 2, 3}$, Maciej J. Capi\'nski$^{4}$, Marcel Guardia$^{1,2, 3}$, Tere M. Seara$^{1,2, 3}$  \\
\small${}^{1}$ Departament de Matem\`atiques, Universitat Polit\`ecnica de Catalunya (UPC), Barcelona, Spain.\\
\small${}^{2}$ IMTECH, Universitat Polit\`ecnica de Catalunya (UPC), Barcelona, Spain.\\
\small${}^{3}$ Centre de Recerca Matem\`atica, Barcelona, Spain.\\
\small${}^{4}$AGH University of Science and Technology,
Faculty of Applied Mathematics\\
\small al. Mickiewicza 30, 30-059 Kraków, Poland}

\begin{document}

\maketitle

\begin{abstract}
    Consider analytic generic unfoldings of the three dimensional conservative Hopf-Zero singularity. Under open conditions on the parameters determining the singularity, the unfolding possesses two saddle-foci when the unfolding parameter is small enough. One of them has one dimensional stable manifold and two dimensional unstable manifold whereas the other one has  one dimensional unstable manifold and two dimensional stable manifold. Baldom\'a, Castej\'on and Seara \cite{BaCaSe13} gave an asymptotic formula for the distance between the one dimensional invariant manifolds in a suitable transverse section. This distance is exponentially small with respect to the perturbative parameter, and it depends on what is usually called a  Stokes constant. The non-vanishing of this constant implies that the distance between the invariant manifolds at the section is not zero. However, up to now there do not exist analytic techniques to check that condition. 
    %In this paper, we provide a computer assisted proof to give accurate estimates of the Stokes constant. We apply the method to two concrete unfoldings of the Hopf-Zero singularity.
    In this paper we provide a method for obtaining accurate rigorous computer assisted bounds for the Stokes constant. We apply it to two concrete unfoldings of the Hopf-Zero singularity, obtaining a computer assisted proof that the constant is non-zero
\end{abstract}

\tableofcontents

\section{Introduction}\label{sec:intro}
One of the fundamental questions in dynamical systems is to assess whether a given model possesses chaotic dynamics or not. In particular, one would like to prove whether the model has a hyperbolic invariant set whose dynamics is conjugated to the symbolic dynamics  of the usual Bernouilli shift by means of the construction of a Smale horseshoe. Since the pioneering works by Smale and Shilnikov, it is well known that the construction of such invariant sets may be attained by analyzing the stable and unstable invariant manifolds of hyperbolic invariant objects (critical points, periodic orbits, invariant tori) and their intersections.

Such analysis can be done by classical perturbative techniques such as (suitable versions of) Melnikov Theory~\cite{Melnikov63} or by means of Computer Assisted Proofs~\cite{MR3567489, MR3843291}. However, there are settings where Melnikov Theory nor ``direct'' Computer Assisted Proofs (that is, rigorous computation of the invariant manifolds) cannot be applied. For instance, in the so-called exponentially small splitting of separatrices setting. That is, on models which depend on a small parameter and where the distance between the stable and unstable invariant manifolds is exponentially small with respect to this parameter. 

This phenomenon of exponentially small splitting of separatrices often appears in analytic systems with different time scales, which couple fast rotation with slow hyperbolic motion. Example of such settings are nearly integrable Hamiltonian systems at resonances, near the identity area preserving maps or local bifurcations in Hamiltonian, reversible or volume preserving settings. In such settings, one needs more sophisticated techniques rather than Melnikov Theory to analyze the distance between the  stable and unstable invariant manifolds. Most of the results in the area follow the seminal approach proposed by Lazutkin in \cite{Lazutkin84} (there are though other approaches such as \cite{Treshev97}). Using these techniques, one can provide an asymptotic formula for the distance between the invariant manifolds, with respect to the perturbation parameter. If we denote by $\eps$ the small parameter, the distance is usually of the form 
\[
d=d(\eps)\sim \Theta \eps^\alpha e^{\frac{a}{\eps^\beta}}\qquad \text{as}\qquad \eps\to 0
\]
for some constants $\Theta$, $\alpha$, $a$ and $\beta$. In most of the settings, the constants $\alpha$, $a$ and $\beta$ have explicit formulas and can be ``easily'' computed for given models. However, the constant $\Theta$ is of radically different nature and much harder to compute. Indeed, the constants $\alpha$, $a$ and $\beta$ depend on certain first order terms of the model whereas $\Theta$, which we refer to as the Stokes constant, depends in a non-trivial way on the ``whole jet'' of the considered model. Note that it is crucial to know whether $\Theta$ vanishes or not, since its vanishing makes the whole first order between the invariant manifolds vanish and, consequently, chaos can not be guaranteed in the system. 

The purpose of this paper is to provide (computer assisted) methods to check, in given models, that  the Stokes constant does not vanish. Moreover, our method provides a rigorous accurate computation of this constant. To show the main ideas of the method and avoid technicalities, we  focus on the simplest setting where this method can be implemented: the breakdown of a one-dimensional heteroclinic connection for generic analytic unfoldings of the volume preserving Hopf-zero singularity. 

This problem was analyzed in \cite{BaSe08, BaCaSe13}. In these papers and the companions~\cite{MR3846870,MR3817789,BaIbSe20}, the authors prove that, in generic unfoldings of an open set of Hopf-zero singularities, one can encounter Shilnikov chaos \cite{Shil70}. The fundamental difficulty in these models is to prove that the one dimensional and two-dimensional heteroclinic manifolds connecting two saddle-foci in a suitable truncated normal form of the unfolding,  break down when one considers the whole vector field. These breakdowns, which are exponentially small,  plus some additional generic conditions lead to existence of chaotic motions.

\begin{remark}
A bifurcation with very similar behavior to that of the conservative Hopf-zero singularity is the Hamiltonian Hopf-zero singularity where a critical point of a 2 degree of freedon Hamiltoian system has a pair of elliptic eigenvalues and a pair of $0$ eigenvalues forming a Jordan block (see for instance \cite{GelfreichL14}). In generic unfoldings, the $0$ eigenvalues become a pair of small real eigenvalues and therefore the critical point becomes a saddle-center. In this setting, one can analyze the one dimensional invariant manifolds of the critical point and obtain an asymptotic formula for their distance (in a suitable section). This distance is exponentially with respect to the perturbative parameter. Then, to prove that they indeed do not intersect, one has to show that a certain Stokes constant is not zero as in the Hopf-zero conservative singularity. The methods presented in this paper can be adapted to this other setting. The Hamiltonian Hopf-zero singularity appears in many physical models, for instance in the Restricted Planar 3 Body Problem (see \cite{BaldomaGG21a,BaldomaGG21b}). It also plays an important role in the breakdown of small amplitude breathers for the Klein-Gordon equation (albeit in an infinite dimensional setting), see \cite{KS87,GomideGSZ21}. We plan to provide a computer assisted proof of the Stokes constant to guarantee the non-existence of small breathers in given Klein-Gordon equations in a future work.
\end{remark}

In this paper, we provide a method to compute the Stokes constant associated to the breakdown of the one-dimensional heteroclinic connection in analytic unfoldings of the conservative Hopf-zero singularity. 

Let us first explain the Hopf-zero singularity and state the main results about the breakdown of its one-dimensional heteroclinic connection obtained in  \cite{BaSe08, BaCaSe13}.

\subsection{Hopf-zero singularity and its unfoldings}\label{sec:introhopfZero}

The Hopf-zero  singularity takes place on a vector field 
$X^*:\mathbb{R}^3\rightarrow\mathbb{R}^3$, which has the origin as a critical point, and such that the eigenvalues of the linear part at this point are $0$, $\pm i\alpha^*$, for some $\alpha^*\neq0$.
Hence, after a linear change of variables, we can assume that the linear part of this vector field at
the origin is
$$
DX^*(0,0,0)=\left(\begin{array}{ccc}0 &\alpha^* &0\\-\alpha^* &0  &0\\0 &0 &0\end{array}\right).$$
We assume that $X^*$ is  analytic. Since $DX^*(0,0,0)$ has zero trace, it is reasonable to study it in the context of analytic conservative vector fields (see \cite{BV84} for the analysis of this singularity in the $\CCC^\infty$ class). In this case, the generic singularity can be met by a generic linear family depending on one parameter, and so it has codimension one.

We  study generic analytic families $X_{\mu}$ of conservative  vector fields on $\mathbb{R}^3$ depending on a parameter $\mu\in\mathbb{R}$, such that $X_{0}=X^*$, the vector field described above.

Following \cite{Guc81} and \cite{GH90}, after some changes of variables, we can write $X_{\mu}$ in its normal form up to order two, namely
\begin{eqnarray}\label{normalformcartabc}
 \frac{d\xb}{d\tb}&=&-\beta_1\xb\zb+\yb\left(\alpha^*+\alpha_2\mu+\alpha_3\zb\right)+\OO_3(\xb,\yb,\zb,\mu),\medskip\nonumber\\
\frac{d\yb}{d\tb}&=&-\xb\left(\alpha^*+\alpha_2\mu+\alpha_3\zb\right)-\beta_1\yb\zb+\OO_3(\xb,\yb,\zb,\mu),\medskip\\
\frac{d\zb}{dt}&=&-\gamma_0\mu+\beta_1\zb^2+\gamma_2(\xb^2+\yb^2)+\gamma_3\mu^2+\OO_3(\xb,\yb,\zb,\mu).\nonumber
\end{eqnarray}
Note that the coefficients $\beta_1$, $\gamma_2$ and $\alpha_3$ depend exclusively on the vector field $X^*$.

From now on, we will assume that $X^*$ and its unfolding $X_\mu$ satisfy the following generic conditions:
%\marginpar{MC: I glued $\beta_1$ and $\gamma_0$ conditions. Hope this is ok.}
\begin{equation}\label{condicionsX0}
 \beta_1\neq0,
 %, \qquad \gamma_1\neq 0 
 \qquad
\gamma_0\neq 0.
\end{equation}
%Moreover, we will consider unfoldings satisfying the generic conditions:
%$$
%\beta_0\neq0, \qquad
%\gamma_0\neq 0.
%$$
% In this case, redefining the parameters $\mu$ and $\nu$, we can assume that:
% \begin{equation}\label{condicionsunfolding1}
%  \beta_0=\gamma_0=1.
% \end{equation}
Depending on the other coefficients $\alpha_i$ and $\gamma_i$, one obtains different qualitative behaviors for the orbits of the vector field $X_{\mu}$.
We  consider  $\mu$ satisfying
\begin{equation}
\beta_1 \gamma_0\mu>0
 %,\qquad |\beta_0\nu|<|\beta_1|\sqrt{|\mu|}
 \label{condicionsparams}.
\end{equation}
In fact, redefining the parameters $\mu$  and the variable
$\zb$, one can achieve
\begin{equation}\label{condicionscompactes}
\beta_1>0,\qquad \gamma_0=1,
\end{equation}
and consequently the open set defined by \eqref{condicionsparams} is now
\begin{equation}\label{condicionsparamsconcr}
 \mu>0.
 \end{equation}
Moreover, dividing the variables $\xb,\yb$ and $\zb$ by $\sqrt{\beta_1}$, and scaling time by $\sqrt{\beta_1}$, redefining the coefficients and denoting $\alpha_0=\alpha^*/\sqrt{\beta_1}$, we can assume that $\beta_1=1$, and therefore system \eqref{normalformcartabc} becomes
\begin{eqnarray}\label{sistema}
 \frac{d\xb}{d\tb}&=&-\xb\zb+\yb\left(\alpha_0+\alpha_2\mu+\alpha_3\zb\right)+\OO_3(\xb,\yb,\zb,\mu),\medskip\nonumber\\
\frac{d\yb}{d\tb}&=&-\xb\left(\alpha_0+\alpha_2\mu+\alpha_3\zb\right)-\yb\zb+\OO_3(\xb,\yb,\zb,\mu),\medskip\\
\frac{d\zb}{dt}&=&-\mu+\zb^2+\gamma_2(\xb^2+\yb^2)+\gamma_3\mu^2+
\OO_3(\xb,\yb,\zb,\mu).\nonumber
\end{eqnarray}
We denote by $X_{\mu}^2$, usually called the normal form of second order, the vector field obtained considering the terms of \eqref{sistema} up to order two. Therefore, one has
$$
X_{\mu}=X_{\mu}^2+F_{\mu}^2, \quad \mbox{where } \ F_{\mu}^2(\xb,\yb,\zb)=
\OO_{3}(\xb,\yb,\zb,\mu).
$$
It can be easily seen that system \eqref{sistema} has two critical points at distance $\OO(\sqrt{\mu})$ to the origin.
Therefore, we scale the variables and parameters so that the critical points are $\OO(1)$ and not $\OO(\sqrt{\mu})$. That is, we  define the new parameter
$\delta=\sqrt{\mu}$, and the new variables $x=\delta^{-1}\xb$, $y=\delta^{-1}\yb$, $z=\delta^{-1}\zb$ and $t=\delta\tb$.
Then, renaming the coefficients $b=\gamma_2$, $c=\alpha_3$,
%and $\coef=\beta_1$,
 system \eqref{sistema} becomes
\begin{equation}\label{initsys}
 \begin{array}{rcl}
 \displaystyle\frac{dx}{dt}&=&\displaystyle - x  z+\left(\frac{\alpha(\delta^2)}{\delta}+cz\right)y+\delta^{-2}f(\delta x,\delta y, \delta z, \delta),\medskip\\
  \displaystyle\frac{dy}{dt}&=&\displaystyle-\left(\frac{\alpha(\delta^2)}{\delta}+cz\right)x-y z+\delta^{-2}g(\delta x,\delta y, \delta z, \delta),\medskip\\
  \displaystyle\frac{dz}{dt}&=&-1+b(x^2+y^2)+z^2+\delta^{-2}h(\delta x, \delta y, \delta z, \delta,),
 \end{array}
\end{equation}
where  $f$, $g$ and $h$ are real analytic functions of order three in all their variables, $\delta>0$ is a small parameter and  $\alpha(\delta^2)=\alpha_0+\alpha_2\de^2$.
%is an analytic function such that $\alpha(0)=\alpha_0\neq0$ and $\alpha'(0)=\alpha_2$.
%\textbf{I think that we can write $\alpha$ independent of $\d^2$ because the terms $O(\d)$ are included in $\delta^{-2} f$. In addition we can also skip $c$}

\begin{remark}
Without loss of generality, we can assume that $\alpha_0$ and $c$ are both positive constants. In particular, for $\delta$ small enough, $\alpha(\delta^2)$ will be also positive.
\end{remark}

Observe that, if we do not consider the higher order terms (that is, $f=g=h=0$), we obtain the unperturbed system
\begin{equation}\label{initsys0}
 \begin{array}{rcl}
  \displaystyle\frac{dx}{dt}&=&\displaystyle - x z+\left(\frac{\alpha(\delta^2)}{\delta}+cz\right)y,\medskip\\
  \displaystyle\frac{dy}{dt}&=&\displaystyle-\left(\frac{\alpha(\delta^2)}{\delta}+cz\right)x-y  z,\medskip\\
  \displaystyle\frac{dz}{dt}&=&-1+b(x^2+y^2)+z^2.
 \end{array}
\end{equation}
The next lemma gives the main properties of this system.

\begin{lemma}[\cite{BaCaSe13}]\label{lemma:unper}
For any value of $\delta>0$, the unperturbed system \eqref{initsys0} has the following properties:
\begin{enumerate}
 \item
 It possesses two hyperbolic fixed points $S_{\pm}^0=(0,0,\pm1)$ which are of saddle-focus type with eigenvalues $\mp 1+|\frac{\alpha}{\delta}\pm c|i$,
$\mp 1-|\frac{\alpha}{\delta}\pm c|i$, and $\pm2$.
\item
The one-dimensional unstable manifold of $S_{+}^0$ and the one-dimensional stable manifold of $S_{-}^0$ coincide along the heteroclinic connection
$\{(0,0,z):\, -1<z<1\}.$
The time parameterization of this heteroclinic connection is given by 
$$
\Upsilon_0(t)=(0,0,\het(t))=(0,0,-\tanh t),
$$
if we require $\Upsilon_0(0)=(0,0,0).$
\end{enumerate}
\end{lemma}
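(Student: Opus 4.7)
The lemma is a direct calculation and its proof should split naturally into the two stated items. My plan is to reduce everything to elementary ODE manipulations on the explicit system \eqref{initsys0}.

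For item (1), I would first locate the fixed points. Setting $x=y=0$ kills both the first and second components of the vector field automatically (each monomial there contains a factor of $x$ or $y$), so the condition for a fixed point becomes $-1+z^2=0$, giving exactly the two candidates $S_\pm^0=(0,0,\pm1)$. Next I would compute the Jacobian at $(0,0,z_0)$; using $x=y=0$ simplifies almost every partial derivative, leaving
\[
DF(0,0,z_0)=\begin{pmatrix} -z_0 & \tfrac{\alpha(\delta^2)}{\delta}+cz_0 & 0\\[2pt] -\bigl(\tfrac{\alpha(\delta^2)}{\delta}+cz_0\bigr) & -z_0 & 0\\[2pt] 0 & 0 & 2z_0\end{pmatrix}.
\]
The block-triangular form makes the spectrum trivial: the upper $2\times 2$ antisymmetric-plus-scalar block contributes $-z_0\pm i\bigl|\tfrac{\alpha(\delta^2)}{\delta}+cz_0\bigr|$, while the last diagonal entry contributes $2z_0$. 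Specializing to $z_0=\pm 1$ gives precisely the eigenvalue lists claimed in the statement, and hyperbolicity follows from $\pm 1\neq 0$ and $\pm 2\neq 0$. The dimensions of the invariant manifolds follow from counting eigenvalues by sign of real part.

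For item (2), I would verify that the $z$-axis $\{x=y=0\}$ is invariant, which again is immediate from the structure of \eqref{initsys0}, and then restrict to it. On the axis the system reduces to the scalar autonomous ODE $\dot z = z^2-1$. This is separable; writing
\[
\frac{dz}{z^2-1}=dt,\qquad \frac12\ln\Bigl|\frac{z-1}{z+1}\Bigr|=t+C,
\]
and imposing $z(0)=0$ forces $C=0$, from which one solves explicitly $z(t)=-\tanh t$. Since $\tanh t\to\mp 1$ as $t\to\pm\infty$, this trajectory satisfies $\Upsilon_0(t)\to S_+^0$ as $t\to-\infty$ and $\Upsilon_0(t)\to S_-^0$ as $t\to+\infty$, so it lies in the unstable manifold of $S_+^0$ and the stable manifold of $S_-^0$. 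Both manifolds are one-dimensional by item (1), hence they coincide with this orbit along the open segment $\{(0,0,z):-1<z<1\}$.

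There is no real obstacle here; the only place where one must be slightly careful is keeping track of the absolute value in the imaginary part of the eigenvalues, since $\tfrac{\alpha(\delta^2)}{\delta}\pm c$ need not have a fixed sign a priori. The remark right before the lemma (which assumes $\alpha_0, c>0$ and $\delta$ small) would actually let one drop the absolute values, but the statement as written is phrased to remain valid in full generality, so I would keep the $|\cdot|$ throughout.
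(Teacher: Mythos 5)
Your proof is correct, and since the paper does not reproduce an argument for this lemma (it is imported as known from \cite{BaCaSe13}), there is no competing proof to compare against — the direct computation you carry out is the natural one and is surely what the cited reference does. Locating the fixed points by setting $x=y=0$, reading the eigenvalues off the block-diagonal Jacobian, and integrating the scalar ODE $\dot z=z^2-1$ on the invariant $z$-axis are all executed correctly, and the identification of the heteroclinic orbit with the one-dimensional unstable/stable manifolds follows from the eigenvalue count exactly as you say.

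One trivial slip worth fixing: the intermediate sentence ``$\tanh t\to\mp 1$ as $t\to\pm\infty$'' has the signs reversed; the correct statement is $\tanh t\to\pm 1$ as $t\to\pm\infty$, equivalently $z(t)=-\tanh t\to\mp 1$ as $t\to\pm\infty$. The conclusion you draw from it ($\Upsilon_0\to S_+^0$ as $t\to-\infty$, $\Upsilon_0\to S_-^0$ as $t\to+\infty$) is nevertheless correct, so this is only a typographical matter. One optional strengthening: you could note that $(0,0,\pm1)$ are in fact the \emph{only} fixed points of \eqref{initsys0}, since at a fixed point the linear system $\binom{-z\phantom{-}\;\; a}{-a\;-z}\binom{x}{y}=0$ with $a=\tfrac{\alpha(\delta^2)}{\delta}+cz$ has determinant $z^2+a^2$, which never vanishes (if $z=0$ then $a=\alpha(\delta^2)/\delta\ne0$), forcing $x=y=0$. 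The lemma as stated does not require uniqueness, so this is not a gap, merely a sharper observation.
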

 Their $2$-dimensional stable/unstable manifolds also coincide, but we will not deal with this problem in this paper.

The critical points given in Lemma \ref{lemma:unper} are persistent for system \eqref{initsys} for small values of $\de>0$.  Below we summarize some properties of system \eqref{initsys}.
\begin{lemma}[\cite{BaCaSe13}]\label{lemaptscritics}
If $\delta>0$ is small enough, system \eqref{initsys} has two fixed points $S_{\pm}(\delta)$ of saddle-focus type,
$$
S_{\pm}(\delta)=(x_\pm(\delta), y_\pm(\delta),z_\pm(\delta)),
$$
with
$$
x_\pm(\delta)=\OO(\delta^2),\quad y_\pm(\delta)=\OO(\delta^2),\quad z_\pm(\delta)=\pm1+\OO(\delta).
$$
The point $S_+(\delta)$ has a one-dimensional unstable manifold and a two-dimensional stable one.
Conversely, $S_-(\delta)$ has a one-dimensional stable manifold and a two-dimensional unstable one.

Moreover, there are no other fixed points of \eqref{initsys} in the closed ball $B(\delta^{-1/3}).$
\end{lemma}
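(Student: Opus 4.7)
The plan is to view system \eqref{initsys} as a perturbation of the integrable system \eqref{initsys0} and to combine a quantitative Implicit Function Theorem argument near each $S_\pm^0$ with a separate bootstrap that handles the global uniqueness inside $B(\delta^{-1/3})$.

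For existence and the asymptotic sizes I would observe that, since $f,g,h$ are of order three in $(\delta x,\delta y,\delta z,\delta)$, the perturbation $(\delta^{-2}f,\delta^{-2}g,\delta^{-2}h)$ evaluated at $(0,0,\pm 1)$ is $\OO(\delta)$. The Jacobian of \eqref{initsys0} at $S_\pm^0$ is block-diagonal, with scalar $z$-block $\pm 2$ and $(x,y)$-block
$$
\begin{pmatrix} \mp 1 & \alpha(\delta^2)/\delta\pm c \\ -\alpha(\delta^2)/\delta\mp c & \mp 1\end{pmatrix},
$$
whose inverse has operator norm $\OO(\delta)$ because the rotational frequency $\alpha/\delta$ dominates. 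A Newton–Kantorovich / quantitative Implicit Function Theorem argument, using that the second derivatives of the right-hand side of \eqref{initsys} are bounded uniformly in $\delta$, then produces a unique fixed point $S_\pm(\delta)$ in a fixed $\OO(1)$ neighborhood of $S_\pm^0$. Applying the inverse of the Jacobian to the $\OO(\delta)$ residual yields $z_\pm(\delta)=\pm 1+\OO(\delta)$ and $x_\pm(\delta),y_\pm(\delta)=\OO(\delta^2)$, matching the claimed asymptotics.

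The saddle-focus character follows from continuous dependence of the spectrum on the matrix entries: the Jacobian of \eqref{initsys} at $S_\pm(\delta)$ differs from that of \eqref{initsys0} at $S_\pm^0$ entry-wise by an $\OO(\delta)$ correction, so its eigenvalues stay close to $\mp 1\pm\ii|\alpha/\delta\pm c|$ and $\pm 2$, preserving hyperbolicity and the one/two-dimensional splitting of the invariant manifolds.

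The main obstacle is uniqueness in the large ball $B(\delta^{-1/3})$, since the perturbation $\delta^{-2}(f,g,h)$ is merely $\OO(1)$ there: for $|(x,y,z)|\leq\delta^{-1/3}$ the arguments $(\delta x,\delta y,\delta z,\delta)$ have size $\OO(\delta^{2/3})$, whose cube divided by $\delta^2$ is only $\OO(1)$. I would rule out extra zeros by bootstrapping. Combining the first two equations of \eqref{initsys} as $y\cdot(\mathrm{eq}_1)-x\cdot(\mathrm{eq}_2)$ eliminates the $z$ terms and yields, at any fixed point,
$$
\left(\frac{\alpha(\delta^2)}{\delta}+cz\right)(x^2+y^2)=\delta^{-2}\bigl(xg(\delta x,\delta y,\delta z,\delta)-yf(\delta x,\delta y,\delta z,\delta)\bigr).
$$
On $B(\delta^{-1/3})$ the right-hand side is $\OO(\delta^{-1/3})$, while $\alpha/\delta$ dominates $cz$ uniformly for small $\delta$ (as $|cz|\leq|c|\delta^{-1/3}\ll\alpha_0/(2\delta)$), so any fixed point satisfies $x^2+y^2=\OO(\delta^{2/3})$. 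Feeding this back into the third equation bounds $|z|$ uniformly, which in turn reduces the size of $(\delta x,\delta y,\delta z)$ to $\OO(\delta)$; iterating the two identities a finite number of times refines the estimates to $|z\mp 1|=\OO(\delta)$ and $x,y=\OO(\delta^2)$, at which scale the local uniqueness from the Implicit Function Theorem excludes any further zero in $B(\delta^{-1/3})$.
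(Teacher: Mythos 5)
The paper cites this lemma from \cite{BaCaSe13} and does not reproduce a proof, so there is no in-paper argument to compare against; I evaluate your proposal on its own.

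Your argument is correct and is, in essence, the standard one. The local half --- a quantitative Newton--Kantorovich step at $S_\pm^0$, using that the residual $\delta^{-2}(f,g,h)(0,0,\pm\delta,\delta)$ is $\OO(\delta)$, that the unperturbed Jacobian at $S_\pm^0$ is normal and block-diagonal with $(x,y)$-block inverse of operator norm $\OO(\delta)$ and $z$-block inverse of norm $\OO(1)$, and that the Jacobian of \eqref{initsys} is Lipschitz with an $\OO(1)$ constant --- gives a unique zero in an $\OO(1)$ ball together with the anisotropic sizes $x_\pm,y_\pm=\OO(\delta^2)$ and $z_\pm=\pm1+\OO(\delta)$. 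Normality also justifies your eigenvalue-continuity claim (Bauer--Fike gives an $\OO(\delta)$ perturbation of each eigenvalue of the normal $\OO(\delta^{-1})$-norm Jacobian, and the unperturbed eigenvalues are $\OO(\delta^{-1})$-separated), so the real parts keep their signs $\mp 1$ and $\pm 2$, the complex pair stays complex, and the saddle-focus character and the stated manifold dimensions follow.

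For the global uniqueness inside $B(\delta^{-1/3})$, the combination $(\alpha(\delta^2)/\delta+cz)(x^2+y^2)=\delta^{-2}(xg-yf)$ is exactly the right a priori estimate, and your first pass correctly gives $x^2+y^2=\OO(\delta^{2/3})$, then $|z|=\OO(1)$, then $|(\delta x,\delta y,\delta z,\delta)|=\OO(\delta)$. One clarification worth making in the final sentence: at this point the right-hand side is bounded by $2\delta^{-2}\max(|x|,|y|)\cdot C\delta^{3}=\OO(\delta)\max(|x|,|y|)$, while the left-hand side dominates $\tfrac{\alpha_0}{2\delta}\max(|x|,|y|)^{2}$; dividing by $\max(|x|,|y|)$ yields $\max(|x|,|y|)=\OO(\delta^{2})$ in one more pass, and the $z$-equation then gives $|z\mp1|=\OO(\delta)$. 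Stating this self-improving cancellation explicitly is cleaner than ``iterate a finite number of times,'' which, if read as the naive exponent update $a\mapsto(a+2)/2$ in the bound $\max(|x|,|y|)=\OO(\delta^a)$, only approaches exponent $2$ in the limit. Either way, any fixed point in $B(\delta^{-1/3})$ ends up well inside the $\OO(1)$ uniqueness ball around $S_+^0$ or $S_-^0$, so the argument is sound.
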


The theorem proven in \cite{BaCaSe13} is the following.
\begin{theorem}[\cite{BaCaSe13}]\label{maintheoremrescaled}
Consider system \eqref{initsys}, with $\delta>0$ small enough.
Then, there exists a constant $C^*$, such that the distance $d^{\uns,\sta}$ between the one-dimensional stable manifold of $S_-(\delta)$ and the one-dimensional unstable manifold of $S_+(\delta)$, when they meet the plane $z=0$, is given by
$$
d^{\uns,\sta}=\delta^{-2}e^{-\frac{\alpha_0\pi}{2\delta}}e^{\frac{\pi}{2}(\alpha_0h_0+c)}\left(C^*+\OO\left(\frac{1}{\log(1/\delta)}\right)\right),
$$
where $\alpha_0=\alpha(0)$,  and $h_0=-\lim_{z\to 0} z^{-3} h(0,0,z,0,0)$.
\end{theorem}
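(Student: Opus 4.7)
The plan is to follow the Lazutkin-type paradigm for exponentially small splitting, adapted to the three-dimensional volume-preserving setting. First I would construct analytic parametrizations $\gamma^{\uns}(t)$ of the one-dimensional unstable manifold of $S_+(\delta)$ and $\gamma^{\sta}(t)$ of the one-dimensional stable manifold of $S_-(\delta)$ as perturbations of $\Upsilon_0(t)=(0,0,-\tanh t)$ from Lemma~\ref{lemma:unper}. Since $\Upsilon_0$ is singular at $t=\pm i\pi/2$, the natural domains are complex strips
\[
D^{\uns,\sta}_{\kappa}=\{t\in\CC:|\Im t|<\pi/2-\kappa\delta,\ \mp\Re t>-T\},
\]
reaching within $\OO(\delta)$ of the nearest singularity. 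These extensions would be produced by a contraction argument in a Banach space of analytic functions with weights capturing the polynomial blow-up of the variational equation near $t=\pm i\pi/2$. The conclusion is that $\gamma^{\uns}-\Upsilon_0$ and $\gamma^{\sta}-\Upsilon_0$ are bounded (in the appropriate weighted norms) by $\OO(\delta^{2})$ on these domains.

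On the overlap $R=D^{\uns}_{\kappa}\cap D^{\sta}_{\kappa}$ I would study the difference $\Delta(t)=\gamma^{\uns}(t)-\gamma^{\sta}(t)$. In coordinates adapted to the variational equation along $\Upsilon_0$, the leading eigenvalues in the rotating plane are $\pm i(\alpha_0/\delta+c\het(t))+\OO(\delta)$, so the homogeneous solutions behave to leading order like $(\cosh t)^{\pm(1+ic)}\exp(\pm i\alpha_0 t/\delta)$. Thus $\Delta$ decomposes into an ``upper'' mode exponentially small in the upper half-strip and a ``lower'' mode exponentially small in the lower one, and evaluating at $t=0$, the size of $d^{\uns,\sta}$ is governed by the amplitude of the lower mode near $t=i\pi/2$. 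Deforming the contour to $\Im t=\pi/2-\kappa\delta$ already produces the exponential factor $\exp(-\alpha_0\pi/(2\delta))$; the refined task is to compute the prefactor and to verify that it is non-zero.

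To extract the prefactor I would perform the inner rescaling $t=i\pi/2+\delta s$ and let $\delta\to 0$ in system \eqref{initsys}, obtaining a $\delta$-independent \emph{inner equation}. The inner analogues $\Gamma^{\uns},\Gamma^{\sta}$ of the unstable and stable manifolds share the same formal asymptotic expansion as $|s|\to\infty$, and their mismatch satisfies
\[
\Gamma^{\uns}(s)-\Gamma^{\sta}(s)=C^{*}\,\Phi(s)+o(\Phi(s)),\qquad \Im s\to-\infty,
\]
where $\Phi(s)$ is the explicit dominant mode of the linearized inner equation; this \emph{defines} the Stokes constant $C^{*}$. A matching argument on the annulus $\delta\ll|t-i\pi/2|\ll 1$ then identifies the lower-mode amplitude of $\Delta$ with $C^{*}$ times the natural pull-back to outer variables. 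Tracking this pull-back through the weights $(\cosh t)^{\pm(1+ic)}$ of the variational equation, with $h_0$ entering as the cubic coefficient of the $\dot z$ equation contributing to the next-order correction of $\Upsilon_0$, produces exactly the polynomial prefactor $\delta^{-2}\exp(\pi(\alpha_0 h_0+c)/2)$ in the statement.

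The main obstacle is the matching itself: one must control the outer solutions uniformly all the way to distance $\OO(\delta)$ from $t=i\pi/2$ and simultaneously extract the $|s|\gg 1$ asymptotics of the inner solutions with an error strictly smaller than $\Phi(s)$, across an overlap annulus whose width shrinks with $\delta$. The best uniform error one can propagate through this annulus is $\OO(1/\log(1/\delta))$, which is exactly the error recorded in the statement. Finally, the non-vanishing of $C^{*}$ is an intrinsically transcendental datum of the inner equation that cannot be detected at any finite order of the outer expansion — precisely the motivation for the rigorous computer-assisted bounds on $C^{*}$ developed in the remainder of the paper.
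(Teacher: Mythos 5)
This theorem is not proven in the present paper; it is quoted verbatim from \cite{BaCaSe13}, and the only thing the paper itself supplies is the change of variables leading to the inner equation \eqref{eq:InnerModifiedgen} and the statement of Theorem \ref{th:inner} (from \cite{BaSe08}) from which $C^*=|\Theta|$ is extracted. So there is no detailed ``paper's own proof'' to compare against, only the strategy the paper attributes to those references.

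Against that strategy, your outline --- complexify the parametrizations up to distance $\OO(\delta)$ of the singularity $t=\pm i\pi/2$, rescale $t=i\pi/2+\delta s$, pass to a $\delta$-independent inner equation, and match on an intermediate annulus with relative error $\OO(1/\log(1/\delta))$ --- is the correct Lazutkin-type scheme and does agree with what the paper indicates. Two points where your sketch drifts from what the paper actually records and which would affect the prefactor computation. First, the variational equation along $\Upsilon_0$ in the complex variable $u=x+iy$ reads $\dot u=(1+ic)\tanh t\,u - i(\alpha/\delta)u$, and in $\bar u=x-iy$ it reads $\dot{\bar u}=(1-ic)\tanh t\,\bar u + i(\alpha/\delta)\bar u$; so the weights are $(\cosh t)^{1\pm ic}$, not $(\cosh t)^{\pm(1+ic)}$. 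Both homogeneous modes carry the \emph{same} real exponent $1$, and the discrimination between them comes entirely from the oscillatory factor $e^{\mp i\alpha t/\delta}$, not from a growth/decay dichotomy in $\cosh t$. Second, according to Theorem \ref{th:inner} the dominant asymptotics of the inner mismatch is
\[
\Delta\psi(s)=s\,e^{-i\alpha(s-h_0\log s)}\left[\begin{pmatrix}\Theta\\0\end{pmatrix}+\OO\!\left(\frac{1}{|s|}\right)\right],
\]
i.e.\ $h_0$ enters through a logarithmic correction to the \emph{phase of the inner solution itself}; the factor $e^{\alpha_0 h_0\pi/2}$ in the final formula is produced when the term $e^{i\alpha h_0\log s}$ is evaluated along $\arg s\to -\pi/2$, not only through the outer pull-back of a next-order correction of $\Upsilon_0$ as you suggest. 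With those corrections your sketch is consistent with the approach of \cite{BaCaSe13}, though of course all the delicate quantitative work (weighted Banach spaces near the singularity, uniform control over the shrinking matching annulus, and the source of the $\OO(1/\log(1/\delta))$ error) is only asserted, not carried out.
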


In~\cite{BaCaSe13} was proven that the constant $C^*$ comes from the so called \textit{inner equation} and that, generically, it does not vanish. However, for a given model is usually very hard to prove analytically whether the associated $C^*$ vanishes or not. In this paper we provide a rigorous (computer assisted) method to check whether it vanishes and to compute its value.

\subsection{The inner equation}
One of the key parts of the proof of Theorem \ref{maintheoremrescaled} is to analyze an inner equation. This equation provides the Stokes constant $C^*$ and it was obtained and analyzed in  \cite{BaSe08}. To obtain it, 
\begin{comment}
After normal form procedure and some scalings, a generic unfolding of the Hopf-zero singularity can be written as:
\begin{equation}\label{initsys}
 \begin{array}{rcl}
 \displaystyle\frac{dx}{dt}&=&\displaystyle - x  z+\left(\frac{\al}{\delta}+cz\right)y+\delta^{-2}f(\delta x,\delta y, \delta z, \delta),\medskip\\
  \displaystyle\frac{dy}{dt}&=&\displaystyle-\left(\frac{\al}{\delta}+cz\right)x-y z+\delta^{-2}g(\delta x,\delta y, \delta z, \delta),\medskip\\
  \displaystyle\frac{dz}{dt}&=&-1+b(x^2+y^2)+z^2+\delta^{-2}h(\delta x, \delta y, \delta z, \delta,),
 \end{array}
\end{equation}
where  $f$, $g$ and $h$ are real analytic functions of order three in all their variables, $\delta>0$ is a small parameter.
\end{comment}
we perform the change of coordinates
$(\phi,\varphi,\eta)=C_{\d}(x,y,z)$ given by
\begin{equation*}
\phi=\d(x+\ii y), \qquad \varphi = \d(x-\ii y),\qquad \eta = \d z, \qquad
\tau=\frac{t-\ii\pi/2}{\d}.
\end{equation*}
Applying this change to  system~\eqref{initsys}, one obtains
\begin{eqnarray}\label{almostinner}
\frac{d\phi}{d\tau}&=&\big (-\al\ii -\eta\big )\phi
+\tilde{F}_1(\phi,\varphi,\eta ,\d) \notag\\
\frac{d\varphi}{d\tau}&=&\big (\al \ii -\eta \big )\varphi
+\tilde{F}_2(\phi,\varphi,\eta,\d) \\
\frac{d\eta}{d\tau}&=& -\d^2 + b\phi\varphi +\eta^2 +
\tilde{H}(\phi,\varphi,\eta,\d) \notag
\end{eqnarray}
where
\begin{eqnarray*}
\tilde{F}_1(\phi,\varphi, \eta,\d)
&=& f(C_{\d}^{-1}(\phi,\varphi,\eta),\d)+\ii g(C_{\d}^{-1}(\phi,\varphi,\eta),\d),\\
\tilde{F}_2(\phi,\varphi, \eta,\d)
&=& f(C_{\d}^{-1}(\phi,\varphi,\eta),\d)-\ii g(C_{\d}^{-1}(\phi,\varphi,\eta),\d),\\
\tilde{H}(\phi,\varphi, \eta,\d) &=&h(C_{\d}^{-1}(\phi,\varphi,\eta),\d).
\end{eqnarray*}

The inner equation comes from~\eqref{almostinner} taking $\d=0$. Defining
$F_i(\phi,\varphi, \eta)=\tilde{F}_i(\phi,\varphi, \eta,0)$ and
$H(\phi,\varphi,\eta)=\tilde{H}(\phi,\varphi,\eta,0)$ and, for technical reasons, performing the change $\eta =-s^{-1}$, we get
\begin{eqnarray}\label{inner}
\frac{d\phi}{d\tau}&=&-\left (\al \ii - \frac{1}{s}\right )
+ F_1\left(\phi,\varphi,-s^{-1}\right) \notag\\
\frac{d\varphi}{d\tau}&=&\left (\al \ii + \frac{1}{s}\right )
+ F_2\left(\phi,\varphi,-s^{-1}\right) \label{eq:s-system}\\
\frac{ds}{d\tau}&=&1+s^2\left(b \phi \varphi +  H\left(\phi,\varphi,-s^{-1}\right)\right).\notag
\end{eqnarray}
We reparameterize time so that equation~\eqref{inner} becomes a
non-autonomous 2--dimensional equation with time $s$,
\begin{equation}\label{eq:Inner}
\begin{split}
\phi'&= \frac{-\left (\al \ii - \frac{1}{s}\right ) \phi
+ F_1(\phi,\varphi,-s^{-1})}{1+s^2(b \phi \varphi +  H(\phi,\varphi,-s^{-1}))}
\\
\varphi'&= \frac{\left ( \al \ii + \frac{1}{s}\right )\varphi
+ F_2(\phi,\varphi,-s^{-1})}{1+s^2(b \phi \varphi +  H(\phi,\varphi,-s^{-1}))}
\end{split}
\end{equation}
with $'=\frac{d}{ds}$.

To analyze this system, we separate its linear terms from the nonlinear ones. Indeed,  defining
\begin{equation}\label{def:MatrixA}
\AAA(s)= \begin{pmatrix}
-i\al+\frac{1}{s}&0\\
0&i\al+\frac{1}{s}
 \end{pmatrix}
\end{equation}
and
\begin{equation}\label{def:Rgen}
\SSS(\phi,\varphi,s)=
 \begin{pmatrix}
 \dps \frac{\left (\al \ii - \frac{1}{s}\right )\phi s^2\big (b \phi \varphi +  H(\phi,\varphi,-s^{-1})\big )+F_1(\phi,\varphi,-s^{-1}) }{1+s^2(b \phi \varphi +  H(\phi,\varphi,-s^{-1}))} \\
 \dps \frac{-\left (\al \ii + \frac{1}{s}\right )\varphi s^2 \big (b \phi \varphi +  H(\phi,\varphi,-s^{-1})\big )+F_2(\phi,\varphi,-s^{-1}) }{1+s^2(b \phi \varphi +  H(\phi,\varphi,-s^{-1}))}
 \end{pmatrix},
\end{equation}
equation \eqref{eq:Inner} can be expressed as
\begin{equation}\label{eq:InnerModifiedgen}
 \begin{pmatrix} \frac{d\phi}{ds}\\ \frac{d\varphi}{ds}\end{pmatrix}=\AAA(s) \begin{pmatrix}\phi\\ \varphi\end{pmatrix}+\SSS(\phi,\varphi,s).
\end{equation}
From now on we will refer to~\eqref{eq:InnerModifiedgen} as the inner equation.

%\subsection{Preliminary results}
For $s\in \CC$ we shall write $\Re s$ and $\Im s$ for its real and imaginary part, respectively. Following~\cite{BaSe08}, we define the  \emph{inner domains} as
\begin{equation}\label{def:domainInnner}
% \begin{split}
\DDD^-_\rr = \left\{ s \in \CC:
|\Im s|\geq -\tan \beta \Re s -\rr, \; \Re s\leq 0
\right\}, \qquad
\DDD^+_\rr=\{s:-s\in \DDD^-\}
%  = \DuInn \cap \DsInn \cap
% \left\{ U \in \complexs \st |\Im(U)|\geq 0
% \right\},
% \end{split}
\end{equation}
for some $\rr>0$.

%The following result is proven in~\cite{BaSe08}.
\begin{theorem}[\cite{BaSe08}]\label{th:inner}
If $\rr$ is big enough, the inner equation has two solutions
$\psi^\pm=(\phi^{\pm},\varphi^{\pm})$ defined in $\DDD^\pm_\rr$ satisfying the asymptotic condition
\begin{equation}\label{def:asymptcond}
 \lim_{\Re s\to\pm\infty}\psi^\pm(s)=0.
\end{equation}
Moreover its difference satisfies that, for $s\in \DDD^+_\rr\cap \DDD^-_\rr \cap \{\Im s<0\}$
\begin{equation}\label{def:asymptformulainner}
\Delta \psi(s) = \psi^{+}(s)-\psi^-(s) =  s e^{-i \al (s -  \h \log s
)}\left[ \left (\begin{array}{c}\Theta \\ 0\end{array}\right )+\mathcal{O}\left(\frac{1}{|s|}\right)\right].
\end{equation}
In addition $\Theta\neq 0$ if and only if $\Delta \psi\not\equiv 0$.
%\textcolor{red}{mention the domain of definition}
\end{theorem}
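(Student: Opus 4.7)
My plan has three stages: (1) existence of $\psi^\pm$ on $\DDD^\pm_\rr$ by a Banach fixed-point argument, (2) derivation and asymptotic analysis of the variational equation satisfied by $\Delta\psi$ on the overlap, and (3) identification of $\Theta$ as the obstruction to $\Delta\psi \equiv 0$.

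For step (1), I would construct $\psi^-$ on $\DDD^-_\rr$, the situation on $\DDD^+_\rr$ being symmetric under $s\mapsto -s$. The fundamental matrix of the linear part $\AAA(s)$ in~\eqref{def:MatrixA} is $\Phi(s)=\mathrm{diag}(s\, e^{-\ii\al s},\, s\, e^{\ii\al s})$, and I would set up $\psi^-$ as a fixed point of the variation-of-constants operator
\[
\TTT(\psi)(s) = \Phi(s)\int^{s}_{-\infty} \Phi(\sigma)^{-1}\,\SSS(\psi(\sigma),\sigma)\,d\sigma,
\]
where the path of integration is chosen componentwise inside $\DDD^-_\rr$ so that the oscillatory factor $e^{\pm\ii\al(s-\sigma)}$ stays bounded: through the upper part of $\DDD^-_\rr$ for the first component (carrying $e^{-\ii\al s}$) and through the lower part for the second. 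Because $F_1$, $F_2$, $H$ are of order three, inspection of~\eqref{def:Rgen} gives $\SSS(0,s)=\OO(|s|^{-3})$ and $D_\psi\SSS(0,s)=\OO(|s|^{-2})$ on $\DDD^-_\rr$. On the Banach space
\[
X_\nu=\bigl\{\psi\colon\DDD^-_\rr\to\CC^2\text{ analytic},\ \|\psi\|_\nu:=\sup_{s\in\DDD^-_\rr}|s|^\nu|\psi(s)|<\infty\bigr\}
\]
with $\nu=2$, the operator $\TTT$ is a contraction for $\rr$ large enough, yielding $\psi^-$ and the asymptotic condition~\eqref{def:asymptcond}.

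For step (2), in the overlap $\DDD^+_\rr\cap\DDD^-_\rr\cap\{\Im s<0\}$ the difference satisfies
\[
(\Delta\psi)'=\bigl[\AAA(s)+B(s)\bigr]\Delta\psi,\qquad B(s)=\int_0^1 D_\psi\SSS(\psi^-+t\Delta\psi,s)\,dt.
\]
Since $\psi^\pm=\OO(|s|^{-2})$ and $D^2_\psi\SSS=\OO(1)$, the matrix $B(s)$ differs from $D_\psi\SSS(0,s)$ by $\OO(|s|^{-2})$. A direct computation in~\eqref{def:Rgen} then shows that the diagonal $1/s$-coefficient of $B(s)$ equals $\ii\al\h$ in the first entry and $-\ii\al\h$ in the second, while the off-diagonal entries are $\OO(|s|^{-2})$; the factor $\h$ appears through the $s^2H$ in the denominator of $\SSS$ combined with the leading cubic $-\h z^3$ of $H(0,0,z,0,0)$. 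A near-identity change of variables that diagonalizes $\AAA+B$ up to $\OO(|s|^{-2})$ then gives the fundamental matrix
\[
M(s)=\bigl(\Id+\OO(|s|^{-1})\bigr)\,\mathrm{diag}\!\left(s^{1+\ii\al\h}e^{-\ii\al s},\ s^{1-\ii\al\h}e^{\ii\al s}\right),
\]
which, in the first coordinate, is exactly $s\,e^{-\ii\al(s-\h\log s)}$ up to a $(1+\OO(1/|s|))$ factor.

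For step (3), write $\Delta\psi(s)=M(s)(\Theta,\wt\Theta)^T$. For $\al>0$ and $\Im s<0$, the second column of $M(s)$ has magnitude $\sim|s|\,e^{\al|\Im s|}$, growing exponentially as $\Im s\to -\infty$, whereas $\Delta\psi=\psi^+-\psi^-$ inherits the polynomial decay $\OO(|s|^{-2})$ from $\psi^\pm\in X_\nu$. This forces $\wt\Theta=0$, so $\Delta\psi(s)=s\,e^{-\ii\al(s-\h\log s)}\bigl[(\Theta,0)^T+\OO(|s|^{-1})\bigr]$, and $\Delta\psi\equiv0$ iff $\Theta=0$. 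The main obstacle I anticipate is the asymptotic analysis of $M(s)$: one must rigorously control the off-diagonal entries of $B(s)$ (which could in principle generate logarithmic cross-terms) and verify that the effective diagonal $1/s$-coefficient is exactly $\pm\ii\al\h$, which requires a careful diagonalization combined with uniform estimates up to the non-convex boundary of $\DDD^\pm_\rr$; the non-convexity also affects the admissibility of the integration paths chosen in step (1).
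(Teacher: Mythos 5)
Your proposal follows the same two-stage strategy as the paper and its source \cite{BaSe08}: a Banach fixed-point argument for $\psi^\pm$, then analysis of the variational equation $\Delta\psi'=(\AAA+B)\Delta\psi$ to isolate $\Theta$. The genuine difference is in stage two. You propose a Levinson-type near-identity change of variables that diagonalizes $\AAA+B$ up to $\OO(|s|^{-2})$ to produce the fundamental matrix $M(s)=(\Id+\OO(|s|^{-1}))\,\mathrm{diag}(s\,e^{-i\al(s-\h\log s)},\,s\,e^{i\al(s-\h\log s)})$; the paper (Section~\ref{sec:theoreticalPart2}) instead writes $\Delta\psi$ as a fixed point of an explicit linear integral operator $\GG$ acting on an exponentially weighted norm \eqref{def:nomexp}, proves $\|\GG\|=A(\rho)<1$ and sets $\Delta\psi=(\Id-\GG)^{-1}\Delta\psi^0$. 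Both routes are viable; the integral-equation formulation handles the off-diagonal coupling in $B$ directly (the exact issue you flag as the main obstacle, which a diagonalization would need extra work to tame), and, crucially for this paper's goal, it produces explicit computable bounds such as \eqref{def:Arho}. Two smaller remarks: (i) the paper works in $\XX_3$, not $\XX_2$: since $\SSS(0,s)=\OO(|s|^{-3})$ from the cubic nonlinearity, the fixed point actually decays like $|s|^{-3}$, and this sharper exponent feeds into all the quantitative estimates in Sections~\ref{sec:step2}--\ref{sec:step4}; $\nu=2$ would still give a contraction but is not sharp. (ii) With $\h=\lim_{z\to 0}z^{-3}H(0,0,z)$ the $1/s$-coefficient of $\partial_\phi\SSS_1$ at $\psi=0$ is $-i\al\h$, not $+i\al\h$ as you write (compare \eqref{def:MatrixtildeA}); your sign gives the theorem's displayed exponent $e^{-i\al(s-\h\log s)}$, but only because there is a sign-convention mismatch between $h_0$ in Theorem~\ref{maintheoremrescaled} and $\h$ in Section~\ref{sec:step1}, so you should double-check which convention you are carrying through.
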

%\begin{remark}
%The result in~\cite{BaSe08} provides more information, but the above properties are enough for the moment.
%\end{remark}
Note that the Stokes constant $\Theta\in\mathbb{C}$ can be defined as
\begin{equation}\label{def:StokesLimit}
\Theta=\lim_{\Im s\to-\infty}s^{-1}e^{i\al( s - \h \log s)}\Delta\phi(s).
\end{equation}
Later, in \cite{BaCaSe13} the authors prove that, if $C^*$ is the constant introduced in Theorem~\ref{maintheoremrescaled}, then
$$
C^*=|\Theta|.
$$
However there is no closed formula for $\Theta$, which depends on the full jet of the nonlinear terms
in~\eqref{eq:InnerModifiedgen}. Our strategy to compute  $\Theta$  is to perform a computer assisted proof.

\section{Rigorous computation of the Stokes constant}\label{sec:method}
%We propose two methods to check that the  Stokes constant is not zero.
 %Both methods boil down to give an explicit $\rho_*$ such that the existence of $\psi^{\pm}$ in the domain $\{\Re z=0, \, \Im z \leq -\rr_*\}$ is guaranteed and compute $\Delta\psi(-i\rr^*)=\psi^+(-i\rr^*)-\psi^-(-i\rr^*)$.
  %\begin{itemize}
   % \item Method 1: By Theorem \ref{th:inner}, if one can check (relying on rigorous computer computations) that $\Delta\psi(-i\rr^*)=\psi^+(-i\rr^*)-\psi^-(-i\rr^*) \neq 0$, one can ensure that $\Theta\neq 0$.
    %\item Method 2: We provide a method to provide rigorous  accurate estimates for $\Theta$. This second method also relies on evaluating $\Delta\psi(-i\rr^*)$ but also takes more advantage on the fixed point argument techniques used to prove Theorem \ref{th:inner}. 
    %Notice that, if we can check that $\Delta\psi(-i\rr^*)=\psi^+(-i\rr^*)-\psi^-(-i\rr^*) \neq 0$ then the Stokes constant is not zero. Moreover, we will provide a method to compute $\Theta$.
%\end{itemize}

%In addition, we will explain also how to improve the accuracy of the computations.

%In  Section~\ref{sec:examples}, we apply these two methods to two examples and, finally,  in Section~\ref{sec:StokesComputeExtrap}, we explain how to improve the accuracy of Method 2 in one of the examples considered in Section~\ref{sec:examples}.

We propose a method to compute the Stokes constant $\Theta$ relying on rigorous, computer assisted, interval arithmetic based validation. The method 
takes advantage from the constructive method for proving Theorem~\ref{th:inner} based on fixed point arguments and we strongly believe that it can be applied to other settings as, for instance, the classical rapidly forced pendulum and close to the identity area preserving maps.

The method we propose to compute the Stokes constant $\Theta$ is divided into two parts.
\begin{itemize}
    \item Part 1:  
    We provide an algorithm to give an explicit $\rho_*>0$ such that the existence of the solutions $\psi^{\pm}$ of the inner equation, in the domain $\{\Re s=0, \, \Im s \leq -\rr_*\}$ is guaranteed. 
    
    The algorithm is based in giving explicit bounds (which depend on the nonlinear terms $\mathcal{S}$ of the inner equation, see \eqref{def:Rgen}) of all the constants involved in the fixed point argument. We believe that this algorithm can be generalized to other situations where the proof of the existence of the corresponding solutions of the inner equation relies on fixed point arguments. 
    
    In the case of the Hopf-zero singularity, by Theorem \ref{th:inner}, if one can check (using rigorous computer computations) that $\Delta\psi(-i\rr^*)=\psi^+(-i\rr^*)-\psi^-(-i\rr^*) \neq 0$ one can ensure that $\Theta\neq 0$.   
\item Part 2: Using that $\Delta \psi(s)$ is defined for $s\in \{\Re s=0,\, \Im s\leq -\rho_*\}$ with $\rho_*$ given in Part 1, we give a method which provides rigorous accurate estimates for $\Theta$.

    We give an algorithm to compute $\rho_{0}\geq \rho_*$ such that, for all $\rr\geq \rr_0$, the Stokes constant and $\Delta \phi(-i\rho)$ satisfies the relation
    \begin{equation}\label{formulaTheta0}
    \Theta = i \rr^{-1} \Delta \phi(-i\rho)e^{\alpha \left (\rho -i h_0 \log \rho - h_0\frac{\pi}{2}\right )} (1+g(\rho))
    \end{equation}
    with $|g(\rho)|<1$. By \eqref{def:asymptformulainner}, we know that  $|g(\rho)|$ is of order $\mathcal{O}(\rho^{-1})$. We provide explicit upper bounds for it.
    
    Part 2 also relies on evaluating $\Delta\psi(-i\rr)$ but takes more advantage on the fixed point argument techniques used to prove formula~\eqref{def:asymptformulainner} in Theorem~\ref{th:inner}. 
    
    A similar formula to~\eqref{formulaTheta0} for $\Theta$ can be deduced in other settings such as the rapidly forced pendulum and close to the identity area preserving maps.
    %, by adding an error term (depending on $\rho$) to the right hand side. Controlling this error term, 
    We should be able to adapt our method to a plethora of different situations. 
\end{itemize}

In Section~\ref{sec:theoretical} we show the theoretical framework we use to design the method. In particular, the functional setting needed for the fixed point argument.
It is divided in Sections~\ref{sec:theoreticalPart1}
and~\ref{sec:theoreticalPart2} which deal with Part 1 and Part 2 respectively. 
In Section~\ref{sec:method1}, we follow the theoretical approach given in the previous sections and compute all the necessary constants to implement the method. After that, in Section~\ref{sec:AlgorithmMethod1} we write the precise algorithm, pointing out all the constants that need to be computed to find $\Theta$. 
%}
%\textcolor{blue}{
In  Section~\ref{sec:examples}, we apply our method to two examples. Finally,  in Section~\ref{sec:StokesComputeExtrap}, we explain how to improve the accuracy in the computation of the Stokes constant in one of the examples considered in Section~\ref{sec:examples}.
%}
\subsection{Scheme of the method. Theoretical approach}\label{sec:theoretical}

\subsubsection{Existence domain of the solutions of the inner equation }\label{sec:theoreticalPart1}
We analyze the solutions $\psi^\pm=(\phi^\pm,\varphi^\pm)$  of equation \eqref{eq:InnerModifiedgen} in the inner domains $\DDD^\pm_\rr$ introduced in 
\eqref{def:domainInnner}.
% \begin{split}
%\DDD^-_\rr = \left\{ s \in \CC:
%|\Im s|\geq -\tan \beta \Re s -\rr, \; \Re s\leq 0
%\right\}, \qquad
%\DDD^+_\rr=\{s:-s\in \DDD^-\}
%%  = \DuInn \cap \DsInn \cap
%% \left\{ U \in \complexs \st |\Im(U)|\geq 0
% \right\},
% \end{split}
%\end{equation}
%for some $\rr>0$.
To prove the existence of the solutions $\psi^\pm$, we set up a fixed point argument. From now on we use subindices 1 and 2 to denote the two components of all vectors and operators.

Note that the right hand side of equation \eqref{eq:InnerModifiedgen} has a linear part plus higher order terms (which will be treated as perturbation). We consider a fundamental matrix $M(s)$
associated to the matrix $\AAA$ in \eqref{def:MatrixA} given by
\begin{equation}\label{def:MatrixM}
 M(s)=s \begin{pmatrix}
e^{-i\al s}&0\\
0&e^{i\al s}
 \end{pmatrix}
\end{equation}
and we define also the integral operators
\begin{equation}\label{def:OperatorB}
\B^{\pm}(h)=\begin{pmatrix}\B_1^{\pm}(h)\\ \B_2^{\pm}(h)\end{pmatrix}=M(s)\int_{\pm\infty}^0 M(s+t)^{-1}h(s+t)dt.
\end{equation}
Then, the solutions $\psi^\pm$ of equation \eqref{eq:InnerModifiedgen} satisfying the asymptotic conditions \eqref{def:asymptcond} must be also solutions of the integral equation
\[
 \psi^\pm=\B^\pm(\SSS(\psi,s)).
\]
Therefore, we  look for fixed points of the operators
\begin{equation}\label{def:F}
 \FF^\pm(\psi)=\B^\pm(\SSS(\psi,s)).
\end{equation}
%We perform all the analysis for $\psi^-$ in the domain $\DDD^-_\rr$. Note that the reversibility with respect to \eqref{def:involution} gives directly $\psi^+$  in the domain $\DDD^+_\rr$ with the same value $\rr$.
We define the Banach spaces
\begin{equation}\label{def:BanachChi}
 \XX_\nu^{\pm}=\left\{h: \DDD^{\pm}_\rr\to\CC: \text{analytic}, \|h\|_\nu<\infty\right\}\quad \text{where}\quad   \|h\|_\nu=\sup_{s\in \DDD^{\pm}_\rr}|s^\nu h(s)|.
\end{equation}
Then, we  obtain fixed points of the operators $\FF^\pm$ in the Banach spaces $\XX_\nu\times\XX_\nu$ with the norm
\[
 \|(\phi,\varphi)\|_{\nu}= \max\{\|\phi\|_\nu, \|\varphi\|_\nu\},
\]
for some $\nu$ to be chosen.
%\textbf{Note: The norm $\|(\phi,\varphi)\|_{\infty,\nu}=
%\max\{\|\phi\|_\nu, \|\varphi\|_\nu\}$ gives better estimates. We skip $\|\cdot \|_{1,\nu}$.}

In~\cite{BaSe08}, it is proven that the operators $\FF^{\pm}$ are contractive operators in some ball of $\XX_3\times\XX_3$ if $\rr\geq\rr^*$ is big enough. Consequently the existence of solutions $\psi^\pm$ of equation~\eqref{eq:InnerModifiedgen} in the domains $\DDD^{\pm}_\rr$ is guaranteed.  However, we want to be explicit in the estimates to compute the smallest $\rr_*$ such that one can prove that $\FF^{\pm}$ are contractive operators.
%the existence of the invariant manifolds in the domains $\DDD^{\pm}_{\rr_*}$ for the equation~\eqref{eq:InnerModifiedgen}.
%To this end we state two technical results. %

To this end, we need to control the dependence on $\rr$ of the Lipschitz constant of the operators $\mathcal{F}^{\pm}$. 
Let us explain briefly the procedure, which is performed only for the $-$ case being the $+$ case analogous. 
\begin{itemize}
    \item In Section~\ref{sec:step0}, we provide explicit bounds for the norm of the linear operator $\mathcal{B}^-$ in \eqref{def:OperatorB}.
\item In Section~\ref{sec:step1}, we define a set of constants depending on the nonlinear terms $\mathcal{S}$ (see \eqref{def:Rgen})  of the inner equation.
    \item We deal with the bounds of the first iteration, $\mathcal{F}^+(0)$ in Section~\ref{sec:step2}. We conclude that it belongs to a closed ball of $\mathcal{X}_3 \times \mathcal{X}_3$ if $\rho\geq \rho_*^1$ where $\rho_*^1$ is determined by the constants in the previous step. The radius of the ball, $M_0(\rho)/2$ is fully determined also by the previous constants.
    \item In Section~\ref{sec:step3}, we provide explicit bounds of the derivative of the nonlinear operator $\mathcal{S}$ and consequently of its  Lipschitz constant, which depends on $\rho$. These computations hold true for values of $\rho \geq \rho_*^2\geq \rho_1^*$ with $\rho_*^2$ satisfying some explicit conditions. 
    \item In Section~\ref{sec:step4}, for $\rho\geq \rho_*^2$, we compute the Lipschitz constant  $L(\rho)$ of $\mathcal{F}^-$ in the closed ball of $\mathcal{X}_3 \times \mathcal{X}_3$ of radius $M_0(\rho)$.
    \item In Section~\ref{sec:step5}, we set $\rho_*\geq \rho_*^2$ for the existence result. We choose $\rr^*$ such that $L(\rho_*)\leq \frac{1}{2}$. Then, since 
    $$
    \|\psi^-\|_3 \leq \|\mathcal{F}^-(0) \|_3 + 
    \|\mathcal{F}^-(\psi^-) - \mathcal{F}^-(0) \|_3 \leq \frac{M_0(\rho)}{2} + L(\rho) \|\psi^-\|_3 \leq M_0(\rho)  
    $$
    the fixed point theorem ensures the existence of a fixed point $\psi^-$ satisfying $\|\psi^- \|_3 \leq M_0(\rho)$ for $\rr\geq\rr^*$.
    \item Finally, we compute $\Delta \phi(-i\rho_*)$ by computed assisted proofs techniques. This completes the Part 1 of the algorithm since $\Delta \phi(-i\rho_*)\neq 0$ implies $\Theta\neq 0$.
\end{itemize}

All the steps described above are written with all the detailed constants in Section~\ref{sec:AlgorithmMethod1}.

\subsubsection{Rigourous computation of the Stokes constant}\label{sec:theoreticalPart2}
In this section we describe a method to compute rigorously the Stokes constant $\Theta$ defined in \eqref{def:StokesLimit} (Part 2 of the algorithm). 
%the smallest $\rr$ such that the existence of $\psi^{\pm}$ in $\DDD^+_\rr$ is guaranteed.
The method is based in the alternative formula for $\Theta$ proposed in~\eqref{formulaTheta0}: 
\begin{equation}\label{def:Thetabis}
\Theta =  i\frac{e^{\al\left (\rr -i\h\log\rr-\h \frac{\pi}{2}\right )}\Delta \phi(-i\rr)}{\rr} (1+ g(\rho)), \qquad \lim_{\rho \to \infty} g(\rho)=0.
\end{equation}
Let us to explain how this formula is derived. The key point is to analyze the difference
\[
 \Delta\psi(s)=\psi^+(s)-\psi^-(s)
\]
as a solution of a linear equation on the vertical axis $\Im s\in (-\infty, -\rr_*)$ where $\rr_*$ is provided by the method explained in Section~\ref{sec:theoreticalPart1}.
Indeed $\Delta\psi=(\Delta \phi, \Delta\varphi)$ satisfies the equation
\[
  \begin{pmatrix}\Delta\phi'\\\Delta\varphi'\end{pmatrix}=
  \left(\AAA(s)+\KK(s)\right)\begin{pmatrix}\Delta\phi\\\Delta\varphi\end{pmatrix}
\]
where
\begin{equation}\label{def:K}
\KK(s)=\int_0^1D\SSS\left(\psi^-(s)+t(\psi^+(s)-\psi^-(s)),s\right)dt
\end{equation}
and $\SSS$ is given in \eqref{def:Rgen}. We look for the linear terms of lower order in $s^{-1}$  of $\SSS$. Indeed, we have that
$$
\SSS(\psi,s)= \frac{1}{s} \left (\begin{array}{c} -\al i \h  \phi \\ \al i\h \varphi \end{array}\right )
+\widetilde{\SSS}(\psi,s)
$$
with $\widetilde{\SSS}(\psi,s)=\mathcal{O}(|s|^{-2})$ when
$\psi\in \XX_3$ (see \eqref{def:BanachChi}). Then,
$\Delta\psi$ satisfies the equation
\begin{equation}\label{def:eqdif0}
  \begin{pmatrix}\Delta\phi'\\\Delta\varphi'\end{pmatrix}=
  \left(\widetilde{\AAA}(s)+\widetilde{\KK}(s)\right)\begin{pmatrix}\Delta\phi\\\Delta\varphi\end{pmatrix}
\end{equation}
where
\begin{equation}\label{def:MatrixtildeA}
\widetilde{\AAA}(s) =
 \begin{pmatrix}
-i\al+\frac{1}{s} -i\al \frac{\h}{s}&0\\
0&i\al+\frac{1}{s} +i\al \frac{\h}{s}
 \end{pmatrix}
\end{equation}
and
\begin{equation}\label{def:Ktilde}
\widetilde{\KK}(s)=\int_0^1D\widetilde{\SSS}\left(\psi^-(s)+t(\psi^+(s)-\psi^-(s)),s\right)dt.
\end{equation}
A fundamental matrix for the linear system $z'=\widetilde{A}(s) z$ is
$$
\begin{pmatrix} s e^{-i\al( s+\h \log s)} & 0 \\ 0 & s e^{i\al( s+\h \log s)} \end{pmatrix}.
$$
Therefore, any solution of  system~\eqref{def:eqdif0} can be expressed as
$$
\begin{pmatrix}\Delta\phi\\\Delta\varphi\end{pmatrix} = \begin{pmatrix}
\dps se^{-i\al (s+\h \log s)} \left [\kk_0 + \int_{-i\rho}^s\frac{e^{i\al ( t+\h \log t)}}{t}\left( \widetilde{\KK}_{11}\Delta\phi+\widetilde{\KK}_{12}\Delta\varphi\right)dt \right ]\\
       \dps se^{i\al (s+\h \log s)}
       \left [ \kk_1 + \int_{-i\rho}^s\frac{e^{-i\al ( t+\h \log t)}}{t}\left(\widetilde{ \KK}_{21}\Delta\phi+\widetilde{\KK}_{22}\Delta\varphi\right)dt
        \right ]\end{pmatrix}
$$
with $\kk_0,\kk_1$ two constants.

Since $|\psi^{\pm}|\leq M_0(\rho) |s|^{-3}$,  $\Delta \psi $ goes to $0$ as $\Im s\to -\infty$ and, therefore,
$$
\kk_1=-\int_{-i\rho }^{-i\infty} \frac{e^{-i\al ( t+\h \log t)}}{t}\left(\widetilde{ \KK}_{21}\Delta\phi+\widetilde{\KK}_{22}\Delta\varphi\right)dt.
$$
Then, we deduce that the difference $\Delta\psi(s)$ is a fixed point of the equation
\begin{equation}\label{eq:diferenciapuntfix}
\begin{split}
\Delta\psi(s) &= \Delta\psi^0(s)+\GG(\Delta \psi) (s), \ \mbox{
where} \\
\Delta\psi^0(s)&=\left(\begin{array}{c} s e^{-i\al( s+\h \log s)}\kappa_0\\0\end{array}\right)
\end{split}
\end{equation}
with $\kappa_0$ a constant depending on $\rho$ and $\GG$ is the linear operator
\begin{equation}\label{def:OperatorG}
 \GG(\Delta \psi)=\begin{pmatrix}
       \dps se^{-i\al (s+\h \log s)}\int_{-i\rr}^s\frac{e^{i\al ( t+\h \log t)}}{t}\left( \widetilde{\KK}_{11}\Delta\phi+\widetilde{\KK}_{12}\Delta\varphi\right)dt\\
       \dps se^{i\al (s+\h \log s)}\int_{-i\infty}^s\frac{e^{-i\al ( t+\h \log t)}}{t}\left(\widetilde{ \KK}_{21}\Delta\phi+\widetilde{\KK}_{22}\Delta\varphi\right)dt
        \end{pmatrix}.
\end{equation}
By construction, $\kk_0$ is defined as
\begin{equation}\label{def:kapparho}
 \kk_0=\kk_0(\rr)=i\frac{e^{\al\left (\rr -i\h\log\rr-\h \frac{\pi}{2}\right )}\Delta \phi(-i\rr)}{\rr}.
\end{equation}
Using \eqref{def:StokesLimit} and \eqref{eq:diferenciapuntfix}, we have
\begin{equation}\label{def:Formula2Stokes}
\begin{aligned} 
 \Theta & =\lim_{\Im s\to-\infty}s^{-1}e^{i\al( s + \h \log s)}\Delta\phi(s)\\ 
 &=\kk_0\left(1+\kk_0^{-1}\lim_{\Im s\to-\infty}s^{-1}e^{i\al (s + \h \log s)}\GG_1\left(\Delta\psi(s)\right)\right).
 \end{aligned}
\end{equation}
We use equality~\eqref{def:Formula2Stokes} to obtain formula~\eqref{def:Thetabis} of $\Theta$. 
To bound $|g(\rho)|$ in formula ~\eqref{def:Thetabis},
%of $\Theta$
%To this end 
we need to control the linear operator $s^{-1}e^{i\al (s + \h \log s)}\GG_1$. To this end, we consider a  norm with exponential weights,
\begin{equation}\label{def:nomexp}
 \|\psi\|=\max\left\{\max_{s\in E} \left|s^{-1}e^{i\al (s+\h \log s)}\phi\right|,\max_{s\in E} \left|e^{i\al ( s+\h \log s) }\varphi\right|\right\}
\end{equation}
with $E=\{\Re s=0, \, \Im s\in (-\rr_*,-\infty)\}$.
 
Observe that~\eqref{eq:diferenciapuntfix} can be rewritten
\[
(\text{Id}-\GG)(\Delta\psi)=\Delta\psi^0.
\] 
Now we see that $\text{Id}-\GG$ is an invertible operator. Indeed, in~\cite{BaSe08}, it was proven that for $\rho $ big enough
\begin{equation}\label{def:Arho}
\|\mathcal{G}_1(\Phi)\|\leq A_1(\rho) \|\Phi \|, \qquad \|\mathcal{G}_2(\Phi)\|\leq A_2(\rho)\|\Phi\|
\end{equation}
with $0<A(\rho):=\max\{A_1(\rho),A_2(\rho)\}<1$. Moreover, \cite{BaSe08} also shows that
$$
\lim_{\rho \to \infty} A(\rho)=0.
$$
These estimates imply that, if $\rho$ is big enough,  $\text{Id}-\GG$ is invertible and therefore $\Delta\psi=(\text{Id}-\GG)^{-1}(\Delta\psi^0)$. Moreover,
\begin{equation}\label{def:Estkappa0}
 \|\Delta\psi\|\leq \frac{1}{1-A(\rho)}\|\Delta\psi^0\|= \frac{|\kk_0(\rr)|}{1-A(\rho)}
\end{equation}
and this inequality directly gives
%\[
%|\Delta \phi(s)|\le \frac{|\kk_0(\rr)|}{1-A(\rho)} |s| 
%\left |e^{-i\al (s + h_0 \log s)} \right | \leq \frac{|\kk_0(\rr)|}{1-A(\rho)}
%|s|
%e^{\al \Im (s)} e^{-\al \h \pi /2 }
%\]
$$
\left |\lim_{\Im s \to -\infty} s^{-1} e^{i\al(s+h_0\log s)} \mathcal{G}_1(\Delta \psi(s)) \right |\leq    A_1(\rho) \frac{|\kk_0(\rr)|}{1-A(\rho)}.
$$
Therefore, from~\eqref{def:Formula2Stokes}, we can conclude that the Stokes constant $\Theta$, which is independent of $\rr$, can be computed as 
\[
 \Theta=\kk_0(\rr) (1+g(\rr)),
\]
for any $\rr $ big enough, where $\kappa_0$ is given in \eqref{def:kapparho} and $g$ satisfies
\begin{equation}\label{def:Mbarrho}
 |g(\rr)|\leq  \overline{M}(\rho):=\frac{A_1(\rho) }{1- A(\rr)}.
\end{equation}
Since $A(\rho),A_1(\rho)$ go to zero as $\rho \to \infty$, the same happens for $\overline{M}(\rr)$. Then~\eqref{def:Thetabis} is proven. 

Notice that the relative error to approximate $\Theta$ by $\kappa_0$ is 
$$
\frac{
\left |\Theta - \kappa_0(\rho)\right | }{|\kappa_0(\rho)|} \leq \overline{M}(\rho).
$$
%where $\overline{M}(\rho)$ goes to $0$ as $\rho$ goes to $\infty$. 
As a consequence, 
%for $\rho$ big enough 
$$
|\Theta | \in \left [|\kappa_0(\rho)| (1-\overline{M}(\rho)), |\kappa_0(\rho)| (1+\overline{M}(\rho))\right ].
$$
%That means that, taking $\rho$ big enough we can compute $\Theta$ with a high accuracy provided that $\kappa_0(\rho)$ 
In Section~\ref{sec:method1} the procedure described above is implemented:  
\begin{itemize}
    \item Following the fixed point argument in~\cite{BaSe08}, in Section~\ref{sec:step6} we give a explicit formula for $A(\rho)=\max\{A_1(\rho),A_2(\rho)\}$ in~\eqref{def:Arho} for $\rr\geq \rr_*$, where $\rr^*$ is the constant given by Part 1.
\item In Section~\ref{sec:step7}, we set $\rho_0\geq \rho_*$ such that $\overline{M}(\rho)<1$ for $\rho \geq \rho_0$. 
 \end{itemize}

%In Section~\ref{sec:method2} we give a method to compute an explicit value for the constant $C$.

\subsection{Computing the Stokes constant: method}\label{sec:method1}
%The theoretical framework}
In this section we are going to give explicit expressions for all the constant involved in the method explained in the previous section.
\subsubsection{The linear operator $\mathcal{B}^{-}$}\label{sec:step0}
\begin{lemma}\label{cor:linearB}
Consider the linear operator $\B^-$ defined
in~\eqref{def:OperatorB}.
\begin{enumerate}
\item When $\nu>1$, the linear operator $\B^-:\XX_{\nu}\times \XX_{\nu}\to \XX_{\nu-1}\times \XX_{\nu-1}$ is continuous and
$$
\|\B^-(\psi)\|_{\nu-1} \leq B_{\nu+1} \|\psi\|_\nu
$$
where
\begin{equation}\label{def:Bnu}
\begin{aligned}
B_{m}&= \frac{\pi}{2} \frac{(m-3))!!}{(m-2)!!} \qquad &&\text{ if $m$ is even}\\
B_m&= \frac{(m-3)!!}{(m-2)!!} \qquad &&\text{ if $m$ is odd}.
\end{aligned}
\end{equation}
\item When $\nu>0$, the linear operator $\B^-:\XX_{\nu}\times \XX_{\nu}\to \XX_{\nu}\times \XX_{\nu}$ is continuous and, for all $0<\gamma \leq  \beta$ (see \eqref{def:domainInnner}),
$$
\|\B^-(\psi)\|_\nu \leq \frac{1}{\al \sin \gamma (\cos \gamma)^{\nu+1}}\|\psi\|_\nu.
$$
Define
$\gamma_*\in (0,\frac{\pi}{2})$ such that $\sin^2 \gamma_* =\frac{1}{\nu+2}$. If $\gamma_* \leq \beta$, %for this value of $\gamma_*$,
\begin{equation}\label{def:Cnu}
\|\B^-(\psi)\|_\nu \leq C_{\nu} \|\psi\|_\nu\qquad \text{ where }\qquad C_{\nu}=\frac{(\nu+2)^{\frac{\nu+2}{2}}}{\al (\nu+1)^{\frac{\nu+1}{2}}}.
\end{equation}
\end{enumerate}
\end{lemma}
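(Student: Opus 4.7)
With the explicit $M(s)=s\,\mathrm{diag}(e^{-i\al s}, e^{i\al s})$, the operator unwinds to
\begin{equation*}
(\B^-(h))_1(s)=\int_{-\infty}^0 \frac{s}{s+t}\,e^{i\al t}\,h_1(s+t)\,dt,\qquad
(\B^-(h))_2(s)=\int_{-\infty}^0 \frac{s}{s+t}\,e^{-i\al t}\,h_2(s+t)\,dt.
\end{equation*}
Part 1 and Part 2 correspond to two ways of estimating these oscillatory integrals: Part 1 uses only $|e^{\pm i\al t}|=1$ on the real contour, while Part 2 rotates the $t$-contour into the appropriate half-plane via Cauchy's theorem to exploit exponential decay.

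For Part 1, I bound $|h_j(s+t)|\le\|h\|_\nu|s+t|^{-\nu}$ and use the elementary inequality $|s+t|^2=(\Re s+t)^2+(\Im s)^2\ge t^2+|s|^2$, valid because $\Re s\le 0$ and $t\le 0$ make the cross term non-negative. The substitution $t=-|s|\tan\theta$ converts
\begin{equation*}
\int_{-\infty}^0(t^2+|s|^2)^{-(\nu+1)/2}\,dt=\frac{1}{|s|^\nu}\int_0^{\pi/2}\cos^{\nu-1}\theta\,d\theta.
\end{equation*}
Therefore $|s|^{\nu-1}|(\B^-(h))_j(s)|\le\|h\|_\nu\int_0^{\pi/2}\cos^{\nu-1}\theta\,d\theta$, and the classical Wallis identities identify this integral with $B_{\nu+1}$ exactly in the two parity cases of~\eqref{def:Bnu}; convergence at $t=-\infty$ is what forces $\nu>1$.

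For Part 2, I rotate the $t$-contour to $t=-\tau e^{\mp i\gamma}$ for $\tau\in[0,\infty)$, the sign chosen so that $|e^{\pm i\al t}|=e^{-\al\tau\sin\gamma}$ decays. The hypothesis $0<\gamma\le\beta$ together with the sectorial definition of $\DDD^-_\rr$ is precisely what ensures that the rotated ray stays in the analyticity domain of $h_j$ and that any closing arcs at infinity contribute nothing. Minimizing $|s+\tau e^{\mp i\gamma}|^2=|s|^2-2\tau\Re(se^{\pm i\gamma})+\tau^2$ over $\tau\ge 0$ yields the geometric lower bound $|s+\tau e^{\mp i\gamma}|\ge|s|\cos\gamma$, and hence
\begin{equation*}
|s|^\nu|(\B^-(h))_j(s)|\le\frac{\|h\|_\nu}{\cos^{\nu+1}\gamma}\int_0^\infty e^{-\al\tau\sin\gamma}\,d\tau=\frac{\|h\|_\nu}{\al\sin\gamma\,\cos^{\nu+1}\gamma}.
\end{equation*}
The optimal $\gamma$ comes from $\frac{d}{d\gamma}[\sin\gamma\cos^{\nu+1}\gamma]=\cos^\nu\gamma[\cos^2\gamma-(\nu+1)\sin^2\gamma]=0$, giving $\sin^2\gamma_*=1/(\nu+2)$, which when substituted back produces $C_\nu$ as in~\eqref{def:Cnu}.

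The main obstacle I foresee is the geometric bound $|s+\tau e^{\mp i\gamma}|\ge|s|\cos\gamma$: depending on the sign of $\Re(se^{\pm i\gamma})$ the infimum in $\tau$ is $|s|^2$ or $|s|^2\sin^2(\arg s\pm\gamma)$, and one must use both the admissible range of $\arg s$ coming from $s\in\DDD^-_\rr$ and the constraint $\gamma\le\beta$ to conclude $|\sin(\arg s\pm\gamma)|\ge\cos\gamma$. Everything else---the Cauchy deformation, the vanishing of arc contributions, and the final one-variable optimization---is routine once the analyticity of $h_j$ on the region swept by the deformation, guaranteed by $h\in\XX^-_\nu\times\XX^-_\nu$, is in hand.
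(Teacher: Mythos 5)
Your proposal is correct and takes essentially the same route as the paper's Appendix~\ref{app:ProofOperator} proof: in Part~1 both use the elementary inequality $|s+t|^2\geq |s|^2+t^2$ and then evaluate $\int_{-\infty}^0(1+t^2)^{-(\nu+1)/2}\,dt$, you via the tangent substitution and Wallis' formulas and the paper via the integration-by-parts recursion of Lemma~\ref{lem:Im}, which are two derivations of the same double-factorial value; in Part~2 both rotate the contour by Cauchy's theorem, bound the shifted denominator below by $|s|\cos\gamma$, integrate the decaying exponential to get $(\al\sin\gamma)^{-1}$, and optimize over $\gamma$ to land on $\sin^2\gamma_*=1/(\nu+2)$. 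The geometric lower bound $|s+te^{\mp i\gamma}|\geq|s|\cos\gamma$ that you flag as the potential obstacle is likewise asserted without further justification in the paper (and the paper even carries a sign typo in the rotation direction and in the stationarity condition $(\nu+1)\sin^2\gamma_*=1$ that your version gets right), so on this point you are at least at the paper's level of rigor.
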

This lemma is proven in Appendix \ref{app:ProofOperator}.
%\subsection{Computing the Stokes constant: Explicit estimates}\label{subsec:paramgen}

From now on we choose $\beta$, the angle in the definition~\eqref{def:domainInnner} of $\DDD^-_{\rr}$, be such that
$6 \sin \beta^2=1$. Then for all $\nu\geq 4$, the optimal value
$\gamma_*$ in second item of Lemma~\ref{cor:linearB}
satisfies that
$\gamma_*\leq \beta$ and the optimal bound \eqref{def:Cnu} will be used throughout the paper.

%Assume then that we are dealing with a concrete example of some inner equation~\eqref{eq:InnerModifiedgen}. 
We emphasize that, if $s\in \DDD^{-}_\rr$, one has that $|s|\geq \rr$.
Recall that we are looking for $\rr_*$ the minimum value for $\rr$ to ensure that the inner equation has a solution $\psi^-$ defined in $\DDD^-_\rr$. Since we need $\rr^{-1}_*$ to be small, we start by assuming that $\rr_*\geq 2$. We will change this value along the proof.

%\textcolor{blue}{Now we prove Theorem \ref{th:inner} giving explicit values for all the estimates %needed in the proof. This allows us to give an algorithm for Part 1 which is explained in Section \ref{sec:AlgorithmMethod1}.}
%We split the algorithm in some steps below.

\subsubsection{Explicit constants for the inner equation}\label{sec:step1}
We consider the $\max$ norm $|(x,y,z)|=\max\{|x|,|y|,|z|\}$. 
Let $a_3=\lim _{z\to 0}z^{-3}F_1(0,0,z)$, $\h=\lim _{z\to 0}z^{-3}H(0,0,z)$ and $C_{F}^0,C_{H}^0,\overline{C}_H^0$ be such that for $|z|\leq \frac{1}{2}$,
\begin{equation}\label{def:estiamatesFH}
\begin{split}
\left |\Delta F_1(z)\right |= \left |F_1(0,0,z)+a_3z^3\right |\leq C_{F}^0|z|^4, \\
\left |\Delta F_2(z)\right |=\left |F_2(0,0,z)+\overline{a_3}z^3\right |\leq C_{F}^0|z|^4,\\
\left |\Delta H(z)\right |=\left |H(0,0,z)+\h z^3\right |\leq C_{H}^0|z|^4,\\
\left |H(0,0,z)\right |\leq \overline{C}_H^0 |z|^3.
\end{split}
\end{equation}
We also introduce $C_F, C_F^{\phi,\varphi}$, $C_H,C_{H}^{\phi,\varphi}$ such that, for
$|(x,y)|\leq |z|$,
\begin{equation}\label{constants2}
\begin{split}
\left | H(x,y,z)\right |&\leq C_H|(x,y,z)|^3 \leq C_H|z|^3  \\
\left | F_{1,2}(x,y,z)\right |&\leq C_F|(x,y,z)|^3 \leq C_F|z|^3, \\
\left |\partial_x F_{1,2}(x,y,z) \right | &\leq C_F^{\phi}  |(x,y,z)|^2 \leq C_F^{\phi}|z|^2, \\
\left |\partial_y F_{1,2}(x,y,z) \right | &\leq C_F^{\varphi}  |(x,y,z)|^2 \leq C_F^{\varphi}|z|^2, \\
\left |\partial_x H(x,y,z) \right | &\leq C_H^{\phi}  |(x,y,z)|^2 \leq C_H^{\phi}|z|^2, \\
\left |\partial_yH(x,y,z) \right | &\leq C_H^{\varphi}  |(x,y,z)|^2 \leq C_H^{\varphi}|z|^2 .
\end{split}
\end{equation}
As a consequence, setting 
\begin{equation}\label{def:Cbar}
\overline{C}_H= C_H^\phi + C_H^\varphi \quad \text{ and }\quad \overline{C}_F= C_F^\phi + C_F^\varphi
\end{equation}
we have
\[
\begin{split}
\left |H(x,y,z) + \h z^3\right | &\leq C_H^0 |z|^4 + \overline{C}_H |(x,y)| |z|^2\\
\left |F_1(x,y,z) +a_3z^3\right | &\leq C_F^0 |z|^4 + \overline{C}_F |(x,y)| |z|^2\\
\left |F_2(x,y,z) +\overline{a_3}z^3\right |& \leq C_F^0 |z|^4 + \overline{C}_F |(x,y)| |z|^2.
\end{split}
\]
%\textcolor{blue}{Check that the same constant is true for $F_1$ and $F_2$}

\subsubsection{Bounds for the norm of the first iteration}\label{sec:step2}
The second step in the proof consists on studying   
%\psi_0(s)=(\phi_0(s),\varphi_0(s))=
%\begin{equation}\label{def:psi0minus}%
%\psi_0^-(s)=
$\FF^-(0)(s)=\B^- (\SSS(0,s))$, 
%\end{equation}
where $\FF^-$ is the operator introduced in \eqref{def:F}.
\begin{lemma}\label{lem:firstiterationgen}
Chose any  $\rr_*^1 >\max\{2, \overline{C}_H^0\}$, take  $\rr \geq \rr_*^1$ and define 
$$
\mathcal{C}_0(\rr)= \frac{C_{F}^0 + |a_3|\overline{C}_H^0}{1- \frac{|\overline{C}_H^0|}{\rr}}.
$$
Then
$\FF^-(0)\in \XX_3\times\XX_3$ and
$$\|\FF^-(0)\|_3 \leq \frac{11 |a_3|}{3\al}+ B_5 \mathcal{C}_0(\rr).$$
\end{lemma}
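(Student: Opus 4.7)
The plan is to decompose $\SSS(0,s)$ into an explicit leading part and a higher-order remainder, and then apply $\B^-$ to each piece separately. Setting $\phi=\varphi=0$ in \eqref{def:Rgen} gives
\[
\SSS(0,s)=\frac{1}{1+s^2 H(0,0,-s^{-1})}\begin{pmatrix}F_1(0,0,-s^{-1})\\F_2(0,0,-s^{-1})\end{pmatrix}.
\]
From \eqref{def:estiamatesFH} we have $|H(0,0,-s^{-1})|\leq \overline{C}_H^0/|s|^3$, so the hypothesis $|s|\geq \rr\geq \rr_*^1>\overline{C}_H^0$ yields $|1+s^2 H(0,0,-s^{-1})|\geq 1-\overline{C}_H^0/\rr>0$ throughout $\DDD^-_\rr$, and $\SSS(0,\cdot)$ is analytic there.

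Using $F_1(0,0,z)=-a_3 z^3+\Delta F_1(z)$ (and its analogue for $F_2$), I would split $\SSS(0,s)=\SSS^{(0)}(s)+\SSS^{(1)}(s)$ with $\SSS^{(0)}(s):=(a_3/s^3,\overline{a_3}/s^3)^T$. A short algebraic manipulation gives
\[
\SSS^{(1)}_1(s)=\frac{\Delta F_1(-s^{-1})-a_3 s^{-1}H(0,0,-s^{-1})}{1+s^2 H(0,0,-s^{-1})},
\]
and similarly for the second component. Combining $|\Delta F_j(-s^{-1})|\leq C_F^0/|s|^4$ and $|a_3 s^{-1}H(0,0,-s^{-1})|\leq |a_3|\overline{C}_H^0/|s|^4$ with the denominator bound yields $\|\SSS^{(1)}\|_4\leq \mathcal{C}_0(\rr)$. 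Item~1 of Lemma~\ref{cor:linearB} with $\nu=4$ then produces $\|\B^-(\SSS^{(1)})\|_3\leq B_5\mathcal{C}_0(\rr)$, which is the second summand in the claimed bound.

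For the leading part, since $\B^-$ is diagonal, the task reduces to estimating the scalar integral
\[
\B^-_1(a_3/s^3)(s)=s\,a_3\int_{-\infty}^0\frac{e^{i\al t}}{(s+t)^4}\,dt
\]
(and its conjugate counterpart in the second component). I would integrate by parts in $t$ several times, differentiating $e^{i\al t}$ and antidifferentiating $(s+t)^{-k}$, to reduce the integral to an explicit finite polynomial in $s^{-1}$ plus a tail integral that is bounded uniformly on $\DDD^-_\rr$ by a one-dimensional integral of the form $\int_0^\infty (1+u^2)^{-k/2}\,du$. Tracking the cancellations between the explicit terms yields the uniform estimate $\|\B^-_1(a_3/s^3)\|_3\leq \frac{11|a_3|}{3\al}$, and the same bound for the second component follows by conjugation. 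Summing the two contributions produces the stated inequality.

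The main technical obstacle is extracting the concrete constant $\frac{11|a_3|}{3\al}$. A direct use of Item~2 of Lemma~\ref{cor:linearB} with $\nu=3$ is not automatic because the optimal angle $\gamma_*$ (with $\sin^2\gamma_*=1/5$) violates the admissibility constraint $\gamma_*\leq\beta$ imposed by the choice $\sin^2\beta=1/6$. One must therefore either settle for the suboptimal choice $\gamma=\beta$ in Item~2 or, as outlined above, exploit the fully explicit form of $\SSS^{(0)}$ through the integration-by-parts analysis in order to isolate the sharp uniform constant.
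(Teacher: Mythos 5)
Your decomposition and strategy match the paper's proof exactly: split $\SSS(0,s)$ into the explicit leading term $\SSS_0(s)=s^{-3}(a_3,\overline{a_3})$ and a remainder satisfying $\|\SSS(0,\cdot)-\SSS_0\|_4\leq\mathcal{C}_0(\rho)$, use Item~1 of Lemma~\ref{cor:linearB} with $\nu=4$ to dispose of the remainder, and treat $\B^-(\SSS_0)$ by an explicit integration by parts. Your remark that Item~2 of Lemma~\ref{cor:linearB} cannot be used at $\nu=3$ because $\sin^2\gamma_*=1/5>1/6=\sin^2\beta$ is precisely the reason the paper handles $\SSS_0$ by hand, so that observation is on point.

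Two small slips in the description of the integration-by-parts step. First, the useful direction is the opposite of what you wrote: one \emph{antidifferentiates} $e^{i\al t}$ and \emph{differentiates} $(s+t)^{-4}$, so that the remaining integral carries a higher power of $(s+t)^{-1}$ and converges faster; integrating the other way would produce a less convergent tail. Second, the paper performs this once, not "several times": this gives $\B_1^-(\SSS_0)=\frac{a_3}{i\al s^3}+\frac{4sa_3}{i\al}\int_{-\infty}^0\frac{e^{i\al t}}{(s+t)^5}\,dt$, and the constant $\frac{|a_3|}{\al}+\frac{8|a_3|}{3\al}=\frac{11|a_3|}{3\al}$ then follows from a straight triangle inequality together with $\int_{-\infty}^0(1+u^2)^{-5/2}\,du=2/3$, with no cancellations to track. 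Repeating the integration by parts further would produce a sharper but $\rho$-dependent bound, not the fixed constant $\tfrac{11|a_3|}{3\al}$ appearing in the lemma. These are presentation details only; the structure of your argument agrees with the paper's.
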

\begin{proof}
By \eqref{def:Rgen}, it is clear that
\[
\SSS_1(0,s) - \frac{a_3}{s^3}= \frac{F_1(0,0,-s^{-1})}{1+ s^2 H(0,0,-s^{-1})}-\frac{a_3}{s^3}= \frac{\Delta F_1 (-s^{-1})-\frac{a_3 H(0,0,-s^{-1})}{s}}{1+s^2 H(0,0,-s^{-1})}
\]
and therefore
$$
\left | \SSS_1(0,s)-\frac{a_3}{s^3}\right | \leq \frac{1}{|s|^4} \frac{C_{F}^0 + |a_3|\overline{C}_H^0}{1- \frac{\overline{C}_H^0}{ \rr}}.
$$
An analogous bound works for $ \SSS_2(0,s)$ and therefore
\begin{equation}\label{firstSSS}
\| \SSS(0,s)- s^{-3} (a_3,\overline{a_3})\|_4 \leq
\frac{C_{F}^0 + |a_3|\overline{C}_H^0}{1- \frac{|\overline{C}_H^0|}{\rr}}=\mathcal{C}_0(\rr).
\end{equation}
We introduce $\SSS_0(s)= s^{-3} (a_3,\overline{a_3})$.
We have that
\begin{equation}\label{B1_aprox1}
\B_1^-(\SSS_0(s)) = a_3 s \int_{-\infty}^0 \frac{e^{i\al t}}{(s+t)^4} \, dt = \frac{a_3}{i \al s^3 } + \frac{4sa_3}{i\al}\int_{-\infty}^0 \frac{e^{i\al t}}{(s+t)^5 }\, dt .
\end{equation}
Notice that, for $s\in \DDD^-_\rr$ and $t\in \R$,
$|s+t|^2 \geq |s|^2 + t^2$. Then, using also Lemma~\ref{lem:Im} (see Appendix \ref{app:ProofOperator}),
\[
\left |s \int_{-\infty}^0 \frac{e^{i\al t}}{(s+t)^5 }\, dt
\right | \leq \frac{1}{|s|^3} \int_{-\infty}^0 \frac{1}{\big (t^2 + 1\big )^{5/2}}  = \frac{2}{3 |s|^3}.
\]
Using this last bound and formula~\eqref{B1_aprox1} we obtain
$$
\left |\B_1^-(\SSS_0(s)) \right |\leq  \frac{1}{|s|^3} \left (\frac{|a_3|}{\al} + \frac{8|a_3|}{3 \al}\right ) \leq \frac{11 |a_3|}{3 \al|s|^3}.
$$
To finish we notice that, from~\eqref{firstSSS} and the first item of Corollary~\ref{cor:linearB},
$$
\left \|\FF^-_1(0)\|_3\leq \| \B^-_1(\SSS_0)\right \|_3 + \left \|\B^-_1 (\SSS(0,\cdot)- \SSS_0)\right \|_3\leq \frac{11|a_3|}{3\al}+\mathcal{C}_0(\rr) B_{5}.
$$
Analogous computations lead to the same estimate for $\|\FF_2^-(0)\|_3$.
%$$
%\|\B^-_2(\SSS_0)\|_3 \leq \frac{11 |a_3|}{3\al}.
%$$

%where in the last inequality we have used .
\end{proof}

\subsubsection{The Lipschitz constant of  $\SSS$}\label{sec:step3}
Let 
\begin{equation}\label{def:M0}
M_0(\rr)= \frac{22 |a_3|}{3\al} + 2 B_5 \mathcal{C}_0(\rr)
\end{equation}
in such a way that $2 \|\FF^-(0)\|_3 \leq M_0(\rr)$.

\begin{lemma}\label{lem:DSgen}
Assume that $\|\phi\|_3,\|\varphi\|_3 \leq M_0(\rr) $ and take $\rr\geq  \rr_*^2$ being $\rr_*^2\geq \rr_*^1$ such that
\begin{equation}\label{def:minM0}
\min \left \{ 1- \frac{b M_0^2(\rr_*^2)}{(\rr_*^2)^4} - \frac{C_H}{(\rr_*^2)}, 1 - \frac{M_0(\rr_*^2)}{(\rr_*^2)^2}\right \}
>0.
\end{equation}
Then
\[
\begin{split}
\big |\partial_\phi \SSS_1(\psi,s) \big | ,
\big |\partial_\varphi \SSS_2(\psi,s) \big | &\leq \frac{M_{11}^1(\rr)}{|s|} + \frac{M_{11}^2(\rr)}{|s|^2}+ \frac{M_{11}^3(\rr)}{|s|^3}+ \frac{M_{11}^4(\rr)}{|s|^4}
\\
\big |\partial_\varphi \SSS_1(\psi,s)|, |\partial_\phi\SSS_2(\psi,s) \big | &\leq \frac{M_{12}^2(\rr)}{|s|^2}+ \frac{M_{12}^3(\rr)}{|s|^3}+ \frac{M_{12}^4(\rr)}{|s|^4}
\end{split}
\]
with
\begin{equation}\label{def:Mij}
\begin{split}
M_{11}^1(\rr) =& \frac{\al |\h|}{1 - \frac{bM_0^2}{\rr^4}-\frac{C_H}{\rr}}, \\
M_{11}^2(\rr)  =& \frac{|\h| + \al C_H^0+ C_{F}^\phi}{1 - \frac{bM_0^2}{\rr^4}-\frac{C_H}{\rr}}, \\
M_{11}^3(\rr)  =& \frac{1 }{ \left (1 - \frac{bM_0^2}{\rr^4} - \frac{C_H}{\rr}\right )^2} \left [M_0\al C_{H}^\phi +C_F C_H^\phi + \left (\al \overline{C}_H M_0 + C_H^0 \right )
\left ( 1 -\frac{b M_0^2}{\rr^4} - \frac{C_H}{\rr}\right )\right ], \\
M_{11}^4 (\rr) =& \frac{M_0}{ \left (1 - \frac{bM_0^2}{\rr^4} - \frac{C_H}{\rr}\right )^2}\\
&\cdot
\left [ b (C_F + \al M_0) + C_H^\phi
+ \left (\al b M_0 +\overline{C}_H+\frac{bM_0}{\rho}\right )\left (1 -\frac{b M_0^2}{\rr^4} - \frac{C_H}{\rr}  \right ) + \frac{bM_0}{\rr}
\right]
\\
M_{12}^2(\rr)  =& \frac{C_{F}^\varphi}{1 - \frac{bM_0^2}{\rr^4}-\frac{C_H}{\rr}}, \\
M_{12}^3(\rr)  =& \frac{M_0\al C_{H}^\varphi +C_F C_H^\varphi }{ \left (1 - \frac{bM_0^2}{\rr^4} - \frac{C_H}{\rr}\right )^2}, \\
M_{12}^4 (\rr) =& \frac{M_0}{ \left (1 - \frac{bM_0^2}{\rr^4} - \frac{C_H}{\rr}\right )^2}
\left ( b (C_F + \al M_0) + C_H^\varphi
+ \frac{bM_0}{\rr}
\right ).
\end{split}
\end{equation}
\end{lemma}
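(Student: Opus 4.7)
My plan is to write each component $\mathcal{S}_i = N_i/D$ where the common denominator is
$D(\phi,\varphi,s) = 1 + s^2(b\phi\varphi + H(\phi,\varphi,-s^{-1}))$,
apply the quotient rule
$\partial_{\star} \mathcal{S}_i = (\partial_{\star} N_i \cdot D - N_i \cdot \partial_{\star} D)/D^2$, and bound each subterm separately. The first step is to control $D$ from below. Under the standing hypothesis $\|\phi\|_3, \|\varphi\|_3 \leq M_0(\rho)$ combined with the second inequality in \eqref{def:minM0}, we get $|\phi(s)|, |\varphi(s)| \leq M_0/|s|^3 \leq 1/|s|$ on $\DDD^-_\rr$, so that $|(\phi,\varphi,-s^{-1})| = |s|^{-1}$ and the bounds \eqref{def:estiamatesFH}, \eqref{constants2} are all applicable. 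Then $|s^2 H(\phi,\varphi,-s^{-1})| \leq C_H/|s|$ and $|s^2 b\phi\varphi| \leq bM_0^2/|s|^4$, giving $|D| \geq 1 - bM_0^2/|s|^4 - C_H/|s| > 0$ by the first inequality in \eqref{def:minM0}.

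Next I would expand
$$\partial_\phi N_1 = (\alpha i - 1/s) s^2 (b\phi\varphi + H) + (\alpha i - 1/s)\phi s^2 (b\varphi + \partial_\phi H) + \partial_\phi F_1$$
and $\partial_\phi D = s^2(b\varphi + \partial_\phi H(\phi,\varphi,-s^{-1}))$, and sort terms by their $|s|^{-k}$ order. The sole $O(1/|s|)$ contribution comes from $\alpha i \cdot s^2 \cdot H(0,0,-s^{-1})$ inside $\partial_\phi N_1 \cdot D^{-1}$: using $H(0,0,z) = -\h z^3 + O(z^4)$ from \eqref{def:estiamatesFH}, this produces $\alpha i \h /s$, and after dividing by the lower bound for $|D|$ it becomes exactly $M_{11}^1/|s|$. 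The $O(1/|s|^2)$ term collects the $(1/s)$-correction in the linear prefactor above, the quartic remainder $\Delta H$ of $H(0,0,-s^{-1})$ (contributing $\alpha C_H^0$), and the derivative $\partial_\phi F_1$ controlled through $C_F^\phi$. The $O(1/|s|^3)$ and $O(1/|s|^4)$ terms collect the cross contribution $N_1 \partial_\phi D / D^2$ (which is where the squared denominator factor $(1 - bM_0^2/\rho^4 - C_H/\rho)^{-2}$ appears), together with the remaining polynomial combinations of $b, C_F, C_H^\phi, \overline{C}_H, M_0$ that match \eqref{def:Mij} line by line.

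The estimate for $\partial_\varphi \mathcal{S}_2$ is identical by the symmetry of the system under $\phi \leftrightarrow \varphi$ combined with the conjugation $\alpha i \leftrightarrow -\alpha i$ in the linear coefficient. For the off-diagonal derivatives $\partial_\varphi \mathcal{S}_1$ and $\partial_\phi \mathcal{S}_2$ there is no $O(1/|s|)$ contribution because the leading cubic piece of $H(0,0,z)$ depends only on the third variable, so differentiating with respect to $\phi$ or $\varphi$ annihilates it; the bound therefore starts at $O(1/|s|^2)$, coming from $\partial_\varphi F_1$ via $C_F^\varphi$, which is precisely $M_{12}^2$. The higher order coefficients $M_{12}^3$ and $M_{12}^4$ are assembled from the same list of terms as in the diagonal case, but with $C_H^\phi$ replaced by $C_H^\varphi$ and with the $\alpha i \h$ contribution missing.

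The main obstacle is not conceptual but organizational: the quotient rule generates many summands, and one must carefully assign each to its correct $|s|^{-k}$ bucket and verify the numerical coefficients match \eqref{def:Mij} exactly. A table-based calculation keeping the powers of $\phi$, $\varphi$, $1/s$ together with the available bounds from \eqref{def:estiamatesFH} and \eqref{constants2} in separate columns renders the verification routine.
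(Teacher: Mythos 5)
Your proposal is correct and follows essentially the same route as the paper: the paper also writes $\partial_\phi\SSS_1$ via the quotient rule (regrouped into two pieces $S_1+S_2$ separating the $F_1$-contributions from the linear-prefactor contributions), bounds the denominator from below by $1-bM_0^2/\rho^4-C_H/\rho$, uses the observation $|\psi(s)|\le M_0/|s|^3\le 1/|s|$ so that \eqref{def:estiamatesFH} and \eqref{constants2} apply, and then sorts the resulting terms by powers of $|s|^{-1}$ to assemble the $M_{11}^k$ and $M_{12}^k$. The minor informality of writing $H(0,0,-s^{-1})$ for the source of the $1/|s|$ term is harmless since the $\phi,\varphi$-dependent correction to $H$ is $O(\overline{C}_H M_0/|s|^5)$ and hence first enters at order $1/|s|^3$, exactly as in the paper.
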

\begin{proof}
Notice that $\rr_*^2\geq \rr_*^1$ and therefore, Lemma~\ref{lem:firstiterationgen} can be applied for $\rr \geq \rr_*^2$. Moreover, if $s\in \DDD^-_\rr$,
$$
|\psi(s)| \leq  \frac{M_0(\rr)}{|s|^3} \leq
\frac{1}{|s|}
$$
so that the bounds in~\eqref{constants2} can also be used.

We start with $\partial_{\phi}\SSS$. We introduce
\[
\begin{split}
S_1(\psi,s)&=\frac{\partial_{\phi} F_1(\psi,-s^{-1})}{1+ s^2 \big (b\phi\varphi+ H(\psi,-s^{-1})\big )} +
\frac{F_1(\psi,-s^{-1}) s^2 \big (b \varphi + \partial_{\phi} H(\psi,-s^{-1})\big )}{\left (1+ s^2 \big (b\phi \varphi+ H(\psi,-s^{-1})\big )\right )^2 }
\\
S_2(\psi,s)&= \frac{\left (\al i -\frac{1}{s}\right )s^2 \big (b\phi\varphi+ H(\psi,-s^{-1})\big )}{1 + s^2 \big (b\phi\varphi+ H(\psi,-s^{-1})\big )}- \frac{\left (\al i -\frac{1}{s}\right ) \phi s^2 \big (b \varphi + \partial_{\phi} H(\psi,-s^{-1})\big )}{\left (1+ s^2 \big (b\phi\varphi+ H(\psi,-s^{-1})\big )\right )^2 }.
\end{split}
\]
Straightforward computations, lead us to
$$
\partial_\phi \SSS_1 = S_1 + S_2.
$$
When $\|\phi\|_3,\|\varphi\|_3 \leq M_0(\rr)$,
\[
\begin{split}
|S_1(\psi,s)|\leq&\frac{1}{|s|^2 } \frac{C_F^\phi}{1 - \frac{bM_0^2}{\rr^4} - \frac{C_H}{\rr}}
+\frac{1}{|s|^3}\frac{C_F  \left (C_H^\phi + b \frac{M_0}{|s|}\right )}{\left (1 - \frac{bM_0^2}{\rr^4} - \frac{C_H}{\rr}\right )^2} \\
\leq & \frac{1}{|s|^2 } \frac{C_F^\phi}{1 - \frac{bM_0^2}{\rr^4} - \frac{C_H}{\rr}} +
\frac{1}{|s|^3} \frac{C_F C_H^\phi}{\left (1 - \frac{bM_0^2}{\rr^4} - \frac{C_H}{\rr}\right )^2}
+ \frac{1}{|s|^4} \frac{bM_0 C_F}{\left (1 - \frac{bM_0^2}{\rr^4} - \frac{C_H}{\rr}\right )^2}, \\
|S_2(\psi,s)|  \leq &\frac{1}{|s|}\frac{\left (\al + \frac{1}{|s|} \right )
\left (|\h| + \frac{C_H^0}{|s|} + \frac{\overline{C}_HM_0}{|s|^2}+ b \frac{M_0^2}{|s|^3} \right) }{1 - \frac{bM_0^2}{\rr^4}-\frac{C_H}{\rr}} +
\frac{M_0}{|s|}\frac{\left (\al + \frac{1}{|s|} \right ) \left  (\frac{C_H^\phi}{|s|^2} +  \frac{bM_0}{|s|^3}  \right)}{ \left (1 - \frac{bM_0^2}{\rr^4} - \frac{C_H}{\rr}\right )^2}\\
= & \frac{1}{1 - \frac{bM_0^2}{\rr^4}-\frac{C_H}{\rr}} \left (\frac{\al |\h|}{|s|} + \frac{|\h| + \al C_H^0}{|s|^2} +  \frac{\al \overline{C}_H M_0 + C_H^0}{|s|^3}+
\frac{\al b M_0^2 + \overline{C}_H M_0}{|s|^4} + \frac{bM_0^2}{|s|^5}\right )
\\ &+ \frac{1}{ \left (1 - \frac{bM_0^2}{\rr^4} - \frac{C_H}{\rr}\right )^2}
\left(\frac{M_0 \al C_H^\phi}{|s|^3} + \frac{\al b M_0^2 + M_0 C_{H}^\phi }{|s|^4} +
\frac{b M_0^2}{|s|^5}\right ).
\end{split}
\]
Therefore we have that
$$
\big |\partial_\phi \SSS_1(\psi,s) \big | \leq \frac{M_{11}^1(\rr)}{|s|} + \frac{M_{11}^2(\rr)}{|s|^2}+ \frac{M_{11}^3(\rr)}{|s|^3}+ \frac{M_{11}^4(\rr)}{|s|^4}
$$
where $M_{11}^k$ are the constants introduced in the lemma.

%with
%\[
%\begin{split}
%M_{11}^3(\rr)  =& \frac{1 }{ \left (1 - \frac{bM_0^2}{\rr^4} - \frac{C_H}{\rr}\right )^2} \left (M_0\al C_{H}^\phi +C_F C_H^\phi + \left (\al \overline{C}_H M_0 + C_H^0 \right )
%\left ( 1 -\frac{b M_0^2}{\rr^4} - \frac{C_H}{\rr}\right )\right ) \\
%M_{11}^4 (\rr) =& \frac{M_0}{ \left (1 - \frac{bM_0^2}{\rr^4} - \frac{C_H}{\rr}\right )^2}\\
%&\cdot\left [ b (C_F + \al M_0) + C_H^\phi
%+ \left (\al b M_0 +\overline{C}_H+\frac{bM_0}{\rho}\right )\left (1 -\frac{b M_0^2}{\rr^4} - \frac{C_H}{\rr}  \right ) + \frac{bM_0}{\rr}
%\right ].
%\end{split}
%\]
We now compute a bound for $\partial_{\varphi} \SSS$. As for
$\partial_{\phi}\SSS$, we define
\[
\begin{split}
S_1(\psi,s)&=\frac{\partial_{\varphi} F_1(\psi,-s^{-1})}{1+ s^2 \big (b\phi\varphi+ H(\psi,-s^{-1})\big )} +
\frac{F_1(\psi,-s^{-1}) s^2 \big (b \phi + \partial_{\varphi} H(\psi,-s^{-1})\big )}{\left (1+ s^2 \big (b\phi \varphi+ H(\psi,-s^{-1})\big )\right )^2 }
\\
S_2(\psi,s)&=  -\frac{\left (\al i -\frac{1}{s}\right ) \phi s^2 \big (b \phi + \partial_{\varphi} H(\psi,-s^{-1})\big )}{\left (1+ s^2 \big (b\phi\varphi+ H(\psi,-s^{-1})\big )\right )^2 }.
\end{split}
\]
and we notice that
$$
\partial_\varphi \SSS_1 = S_1 + S_2.
$$
We have that, if
$\|\phi\|_3,\|\varphi\|_3 \leq M_0(\rr)$,
\[
\begin{split}
|S_1(\psi,s)|
\leq & \frac{1}{|s|^2 } \frac{C_F^\varphi}{1 - \frac{bM_0^2}{\rr^4} - \frac{C_H}{\rr}} +
\frac{1}{|s|^3} \frac{C_F C_H^\varphi}{\left (1 - \frac{bM_0^2}{\rr^4} - \frac{C_H}{\rr}\right )^2}
+ \frac{1}{|s|^4} \frac{bM_0 C_F}{\left (1 - \frac{bM_0^2}{\rr^4} - \frac{C_H}{\rr}\right )^2}, \\
|S_2(\psi,s)|  \leq &\frac{1}{ \left (1 - \frac{bM_0^2}{\rr^4} - \frac{C_H}{\rr}\right )^2}
\left(\frac{M_0 \al C_H^\varphi}{|s|^3} + \frac{\al b M_0^2 + M_0 C_{H}^\varphi }{|s|^4} +
\frac{b M_0^2}{|s|^5}\right ).
\end{split}
\]
Then
$$
\big |\partial_\varphi \SSS_1(\psi,s) \big | \leq \frac{M_{12}^2(\rr)}{|s|^2}+ \frac{M_{12}^3(\rr)}{|s|^3}+ \frac{M_{12}^4(\rr)}{|s|^4}
$$
with the constants $M_{12}^k$ defined in the lemma.

Since the bounds for $F_1,\partial_\phi F_1,\partial_\varphi F_1$ are the same as for $F_2,\partial_\phi F_2,\partial_\varphi F_2$ and using the symmetry in the definition of $\SSS$, we have that
\[
\begin{split}
|\partial_\phi \SSS_2(\psi,s)| & \leq \frac{M_{12}^2(\rr)}{|s|^2}+ \frac{M_{12}^3(\rr)}{|s|^3}+ \frac{M_{12}^4(\rr)}{|s|^4} \\
\big |\partial_\varphi \SSS_2(\psi,s) \big | &\leq \frac{M_{11}^1(\rr)}{|s|} + \frac{M_{11}^2(\rr)}{|s|^2}+ \frac{M_{11}^3(\rr)}{|s|^3}+ \frac{M_{11}^4(\rr)}{|s|^4}.
\end{split}
\]
\end{proof}

As a corollary we obtain the following.
\begin{corollary}\label{coro:Sbounds} If $\psi,\psi'\in B(M_0(\rr))$ with $\rr\geq \rr_*^2$ as in Lemma~\ref{lem:DSgen}.
Then, there exist functions $\Delta \SSS_j$, $j=1\ldots4$, such that
\[
\SSS(\psi,s)-\SSS(\psi',s) = \sum_{j=1}^4 \Delta \SSS_j(\psi,s)- \Delta \SSS_j(\psi',s)
\]
and
\[
\begin{split}
\|\Delta \SSS_1(\psi,s)-\Delta \SSS_1(\psi',s) \|_4 &\leq
M_{11}^1 (\rho) \|\psi- \psi'\|_3
\\
\|\Delta \SSS_j(\psi,s)-\Delta \SSS_j(\psi',s) \|_{3+j} &\leq
(M_{11}^j (\rr) + M_{12}^j(\rr))\|\psi- \psi'\|_3.
\end{split}
\]
As a consequence
\[
\begin{split}
|\SSS(\psi,s)-\SSS(\psi',s)| \leq& |\psi(s) - \psi'(s)|
\left (\frac{M_{11}^1(\rr)}{|s|} + \frac{M_{11}^2(\rr)+ M_{12}^2(\rr)}{|s|^2} \right .\\&+  \left .\frac{M_{11}^3(\rr)+ M_{12}^3(\rr)}{|s|^3}+ \frac{M_{11}^4(\rr)+M_{12}^4(\rr)}{|s|^4}
\right ).
\end{split}
\]
\end{corollary}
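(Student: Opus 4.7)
The plan is to combine the fundamental theorem of calculus along the straight segment between $\psi'$ and $\psi$ with the explicit ordering of the $|s|^{-k}$ terms already produced in the proof of Lemma~\ref{lem:DSgen}.

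Since $B(M_0(\rho))\subset \XX_3\times \XX_3$ is convex, the segment $\psi_t := \psi'+t(\psi-\psi')$, $t\in[0,1]$, remains in $B(M_0(\rho))$, so Lemma~\ref{lem:DSgen} applies to $D\SSS(\psi_t,s)$ pointwise and uniformly in $t$. This gives the representation
\[
\SSS(\psi,s)-\SSS(\psi',s)=\int_0^1\bigl(\partial_\phi \SSS(\psi_t,s)(\phi-\phi')(s)+\partial_\varphi \SSS(\psi_t,s)(\varphi-\varphi')(s)\bigr)\,dt.
\]

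Next, I would inspect the explicit expressions $S_1,S_2$ displayed in the proof of Lemma~\ref{lem:DSgen}: each entry of $D\SSS$ is a finite sum of rational terms that can be grouped according to their pure order in $|s|^{-1}$. Writing each entry as $\sum_{j=1}^4 D^{(j)}_{kl}(\psi,s)$ with $|D^{(j)}_{kl}|\le M^j_{kl}(\rho)/|s|^j$ (and with $M_{12}^1=0$, so that at order $j=1$ only the diagonal entries $\partial_\phi \SSS_1,\partial_\varphi \SSS_2$ survive), set
\[
\Delta \SSS_j(\psi,s)-\Delta \SSS_j(\psi',s):=\int_0^1 D^{(j)}(\psi_t,s)\,(\psi(s)-\psi'(s))\,dt.
\]
Summing over $j=1,\dots,4$ reconstructs $\SSS(\psi,s)-\SSS(\psi',s)$, giving the claimed decomposition. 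Taking absolute values under the integral, applying the entrywise bounds on $D^{(j)}$ and using $|\psi(s)-\psi'(s)|\le \|\psi-\psi'\|_3/|s|^3$, one obtains the stated $\XX_{3+j}$-bound with constant $M_{11}^j+M_{12}^j$ (and just $M_{11}^1$ for $j=1$).

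For the pointwise consequence displayed after the list of norm bounds, no decomposition is needed: one simply bounds the integrand of the FTC representation directly by $(|\partial_\phi \SSS|+|\partial_\varphi \SSS|)(\psi_t,s)\,|\psi(s)-\psi'(s)|$ and uses the aggregated estimates from Lemma~\ref{lem:DSgen}, which produce the stated prefactor in $|s|^{-1},\dots,|s|^{-4}$. The only real obstacle I expect is the bookkeeping required when regrouping the explicit rational terms of $D\SSS$ by their pure powers of $|s|^{-1}$, so that the resulting constants match exactly $M_{11}^j(\rho)+M_{12}^j(\rho)$; this is purely mechanical given the expressions already written out in Lemma~\ref{lem:DSgen}, but it is the only part where a slip could change a constant.
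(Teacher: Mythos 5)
Your proof is correct and follows essentially the same route as the paper's: both represent $\SSS(\psi,s)-\SSS(\psi',s)$ via the fundamental theorem of calculus along the segment $\psi_t=\psi'+t(\psi-\psi')$ (using that $B(M_0(\rho))$ is convex and Lemma~\ref{lem:DSgen} applies along it), then regroup the entries of $D\SSS$ by their pure order in $|s|^{-1}$ using the bounds $M^j_{11},M^j_{12}$ already produced there, with the $j=1$ contribution coming only from the diagonal since $M_{12}^1=0$. The paper simply names the regrouped pieces $S_j^{\phi,\varphi}$ and $S_{ij}$ instead of $D^{(j)}$, but the decomposition, the estimates, and the final pointwise bound via $|\psi(s)-\psi'(s)|\le \|\psi-\psi'\|_3/|s|^3$ coincide with what you wrote.
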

%\textcolor{red}{No estan clares les notacions}
\begin{proof} Indeed:
\[
\begin{split}
\SSS_1(\psi,s)-\SSS_1(\psi',s)=&
(\phi - \phi') \int_{0}^1 \partial_{\phi} \SSS_1 (\psi' + \lambda (\psi-\psi')) \, d\lambda
\\ &+ (\varphi - \varphi') \int_{0}^1 \partial_{\varphi} \SSS_1 (\psi' + \lambda (\psi-\psi')) \, d\lambda \\
=&
(\phi- \phi') (S_1^\phi+ S_2^\phi + S_3^\phi + S_4^\phi)
+ (\varphi- \varphi') (S_2^\varphi + S_3^\varphi + S_4^\varphi)
\end{split}
\]
with $S_{j}^{\phi,\varphi}\in \XX_j$ and
$$
\|S_{j}^{\phi}\|_j \leq M_{11}^j(\rr), \qquad \|S_{j}^{\varphi}\|_j \leq M_{12}^j(\rr).
$$
In analogous way we decompose  $\SSS_2(\psi)-\SSS_2(\psi')$
and by symmetry we obtain that
\begin{equation}\label{difSgen}
\SSS(\psi,s)-\SSS(\psi',s)=
\left (\begin{array}{cc}
S_{11}(s) & S_{12}(s) \\ S_{21}(s) & S_{22}(s) \end{array}\right ) \left (\begin{array}{c} \phi- \phi' \\ \varphi- \varphi'
\end{array}\right )
\end{equation}
with
\[
\begin{split}
|S_{11}(s)|, |S_{22}(s)|& \leq  \frac{M_{11}^1(\rr)}{|s|} + \frac{M_{11}^2(\rr)}{|s|^2}+ \frac{M_{11}^3(\rr)}{|s|^3}+ \frac{M_{11}^4(\rr)}{|s|^4} \\
|S_{12}(s)|, |S_{21}(s)| & \leq \frac{M_{12}^2(\rr)}{|s|^2}+ \frac{M_{12}^3(\rr)}{|s|^3}+ \frac{M_{12}^4(\rr)}{|s|^4}.
\end{split}
\]
Namely, $S_{ij}$ can be decomposed as a sum of functions belonging to the adequate $\XX_{k}$.
Therefore, taking the supremmum norm in~\eqref{difSgen} we get the result.
\end{proof}

\begin{remark}
Notice that, for some concrete functions $F_{1,2}$ and $H$ the general bounds in  that we have used for them and their derivatives (see~\eqref{constants2}) may not be sharp.
%Indeed, the terms of order $|(\phi,\varphi)|$ are bounded
%by $|s|^{-1}$ which is, clearly a rough estimate.
To improve the estimates for $|\partial_{\phi,\varphi} \mathcal{S}|$ in Lemma \ref{lem:DSgen}, we need to track some terms of the functions $F_{12},H$. Indeed, instead of~\eqref{constants2} we can use bounds of the derivatives of the form
\[
\begin{split}
    \big |\partial_x F_{1,2}(x,y,z) \big | & \leq c_{F}^\phi |z|^2 + K_{F}^\phi |(x,y)||z| \\
    \big |\partial_y F_{1,2}(x,y,z) \big | & \leq c_{F}^\varphi |z|^2 + K_{F}^\varphi |(x,y)||z| \\
    \big |\partial_x H(x,y,z) \big | & \leq c_{H}^\phi |z|^2 + K_{H}^\phi |(x,y)||z|
    \\
    \big |\partial_y H(x,y,z) \big | & \leq c_{H}^\varphi |z|^2 + K_{H}^\varphi |(x,y)||z|
\end{split}
\]
which, together with \eqref{def:estiamatesFH}, imply
\[
\begin{split}
\big |F_{1,2}(x,y,z) +a_3z^3 \big| &\leq C_F^0 |z|^4 + \bar{C}_F |(x,y)||z|^2 + \bar{K}_F|(x,y)|^2 |z| \\
\big |H(x,y,z) +\h z^3 \big| &\leq C_H^0 |z|^4 + \bar{C}_H |(x,y)||z|^2 + \bar{K}_H|(x,y)|^2 |z|
\end{split}
\]
with $\bar{C}_F=c_F^\phi + c_F^{\varphi}$, $\bar{C}_H=c_H^\phi + c_H^{\varphi}$, $\bar{K}_F=K_F^\phi + K_F^{\varphi}$, $\bar{K}_H=K_H^\phi + K_H^{\varphi}$. If necessary, since $|(x,y)|\leq |z|$, we can also use
\[
\begin{split}
\big |F_{1,2}(x,y,z) +a_3z^3 \big| &\leq C_F^0 |z|^4 + \tilde{C}_F |(x,y)||z|^2  \\
\big |H(x,y,z) +\h z^3 + a_4 z^4 + a_5 z^5 \big| &\leq \bar{c}_H^0 |z|^6 + \tilde{C}_H |(x,y)||z|^2.
\end{split}
\]
where
\[
\tilde{C}_F=\bar{C}_F+ \bar{K}_F,\qquad \tilde{C}_H=\bar{C}_H+ \bar{K}_H. \]
It is clear that, taking
\[
\begin{split}
C_F & = |a_3| + \frac{C_F^0}{\rr} + \frac{\tilde{C}_F M_0}{\rr^2}, \qquad
C_F^{\phi,\varphi} = c_{F}^{\phi,\varphi} + \frac{K_F^{\phi,\varphi}M_0}{\rr^2} \\
{C}_H&= |a_4| + \frac{|a_5|}{\rr} + \frac{\bar{c}_H^0}{\rr^2} + \frac{\tilde{C}_H M_0}{\rr},
\qquad
C_H^{\phi,\varphi} = c_{H}^{\phi,\varphi} + \frac{K_H^{\phi,\varphi}M_0}{\rr^2},
\end{split}
\]
we can get a more accurate bound for $|\partial_{\phi,\varphi} \mathcal{S}|$.
In fact, we can just change the definition of $M_{ij}^k$ by changing
the value of $C_F, C_F^{\phi,\varphi}, C_H^{\phi,\varphi}$ by their new value.
\end{remark}

\subsubsection{The Lipschitz constant of $\mathcal{F}^-$}\label{sec:step4}
Now we are going to compute the Lipschitz constant of the operator $\FF^-$ in \eqref{def:F}.
\begin{lemma} Take $\rr \geq \rr_*^2$ as in Lemma~\ref{lem:DSgen}.
The operator $\FF:B(M_0) \to \XX_3\times \XX_3$ is Lipschitz with Lipschitz constant $L(\rr)=\min\{L_1(\rr),L_2(\rr)\}$ with
\begin{equation}\label{def:Ls}
\begin{split}
L_1(\rr)=\,&C_4\frac{M_{11}^1(\rho)}{\rr}+B_{6}
\frac{M_{11}^2(\rr)+ M_{12}^2 (\rr)}{\rr} +
B_{7}
\frac{M_{11}^3(\rr)+ M_{12}^3 (\rr)}{\rr^2}
\\ &+B_{8}
\frac{M_{11}^4(\rr)+ M_{12}^4 (\rr)}{\rr^3} \\
L_2(\rr)=\,&C_4 \frac{M_{11}^1(\rho)}{\rr}+ C_5
\frac{M_{11}^2(\rr)+ M_{12}^2 (\rr)}{\rr^2}  +
C_6
\frac{M_{11}^3(\rr)+ M_{12}^3 (\rr)}{\rr^3}
\\ &+C_7
\frac{M_{11}^4(\rr)+ M_{12}^4 (\rr)}{\rr^4}.
\end{split}
\end{equation}
where $B_\nu$ and $C_\nu$ are the constants introduced in \eqref{def:Bnu} and  \eqref{def:Cnu} respectively.
\end{lemma}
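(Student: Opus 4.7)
The plan is to combine the linearity of $\mathcal{B}^-$, the graded decomposition of $\mathcal{S}(\psi,\cdot)-\mathcal{S}(\psi',\cdot)$ from Corollary \ref{coro:Sbounds}, and the two continuity statements of Lemma \ref{cor:linearB}, applied in two different ways to produce the bounds $L_1$ and $L_2$. Concretely, I start from $\mathcal{F}^-(\psi)-\mathcal{F}^-(\psi')=\mathcal{B}^-\bigl(\mathcal{S}(\psi,\cdot)-\mathcal{S}(\psi',\cdot)\bigr)$ and use Corollary \ref{coro:Sbounds} to write the integrand as $\sum_{j=1}^{4}\bigl(\Delta\mathcal{S}_j(\psi,\cdot)-\Delta\mathcal{S}_j(\psi',\cdot)\bigr)$, where each piece lives in $\mathcal{X}_{3+j}\times\mathcal{X}_{3+j}$ with Lipschitz constant $M_{11}^1$ for $j=1$ and $M_{11}^j+M_{12}^j$ for $j=2,3,4$. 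Since $\rho\geq\rho_*^2\geq 2$, one also has the trivial inclusion $\|h\|_\mu\leq \rho^{\mu-\nu}\|h\|_\nu$ for $\nu\geq\mu$, because $|s|\geq\rho$ for $s\in\mathcal{D}^-_\rho$.

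To obtain the bound $L_2$, I apply item~2 of Lemma \ref{cor:linearB} to every piece, keeping the weight index fixed: $\mathcal{B}^-\colon\mathcal{X}_{3+j}\to\mathcal{X}_{3+j}$ with norm $C_{3+j}$ (valid for $j\geq 1$ since $3+j\geq 4$, which is exactly the regime in which the optimal angle satisfies $\gamma_*\leq\beta$ under the choice $6\sin^2\beta=1$). Then I downgrade each estimate from $\mathcal{X}_{3+j}$ to $\mathcal{X}_3$, paying a factor $\rho^{-j}$. Summing the four contributions gives
\[
\|\mathcal{F}^-(\psi)-\mathcal{F}^-(\psi')\|_3\leq L_2(\rho)\,\|\psi-\psi'\|_3.
\]

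To obtain the bound $L_1$, the essential observation is that the $j=1$ piece requires special handling. If one applied item~1 of Lemma \ref{cor:linearB} with $\nu=4$ to $\Delta\mathcal{S}_1\in\mathcal{X}_4$, one would obtain a bound in $\mathcal{X}_3$ with constant $B_5$ but \emph{no} factor of $\rho^{-1}$; since $M_{11}^1(\rho)\to\alpha|h_0|\neq 0$ as $\rho\to\infty$, such a term would not shrink and would spoil contractivity. Therefore, for $j=1$ I again use item~2 (giving $C_4$ in $\mathcal{X}_4$) and then convert to $\mathcal{X}_3$ paying $\rho^{-1}$. For $j=2,3,4$ I use item~1 of Lemma \ref{cor:linearB} with $\nu=3+j$, which yields a bound in $\mathcal{X}_{2+j}$ with constant $B_{4+j}$; I then convert to $\mathcal{X}_3$ gaining an extra $\rho^{-(j-1)}$. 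Summing gives $L_1(\rho)$.

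Since both Lipschitz bounds $L_1$ and $L_2$ are valid for the same operator, the Lipschitz constant of $\mathcal{F}^-$ on $B(M_0(\rho))$ is at most $L(\rho)=\min\{L_1(\rho),L_2(\rho)\}$. There is no real obstacle beyond index bookkeeping; the only conceptually relevant step is the decision to use item~2 (not item~1) of Lemma \ref{cor:linearB} for the $j=1$ term, which is dictated by the fact that $M_{11}^1(\rho)$ does not decay with $\rho$, and which is the reason the first term of $L_1$ coincides with the first term of $L_2$.
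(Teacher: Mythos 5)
Your proof is correct and follows essentially the same route as the paper's: decompose $\mathcal{S}(\psi,\cdot)-\mathcal{S}(\psi',\cdot)$ via Corollary \ref{coro:Sbounds}, apply item~2 of Lemma \ref{cor:linearB} to the $j=1$ piece and either item~1 (for $L_1$) or item~2 (for $L_2$) to the $j\geq 2$ pieces, and convert down to the $\|\cdot\|_3$ norm by paying powers of $\rho^{-1}$. Your additional remark explaining \emph{why} item~2 must be used for the $j=1$ piece (namely, item~1 would give $B_5\,M_{11}^1$ with no decay in $\rho$, since $M_{11}^1(\rho)\to\alpha|h_0|$, spoiling contractivity when $h_0\neq 0$) is a useful motivational point that the paper's terse proof leaves implicit; note only that this non-decay occurs when $h_0\neq 0$, which is the generic case, while in both of the paper's worked examples $H\equiv 0$ so $h_0=0$ and $M_{11}^1$ vanishes.
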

\begin{proof}
We apply the second item of Lemma~\ref{cor:linearB} to
$\Delta \SSS_1(\psi,s)-\Delta \SSS_1(\psi',s)$ and we obtain that
\begin{equation}\label{bound:1Lipschitz}
\|\B^-(\Delta \SSS_1(\psi,s)-\Delta \SSS_1(\psi',s))\|_4\leq C_4M_{11}^1(\rho) \|\phi - \phi'\|_3.
\end{equation}
Now we apply the first item of Lemma~\ref{cor:linearB} to
$\Delta\SSS_j(\psi,s)-\Delta \SSS_j(\psi',s)$ and we obtain
\begin{equation}\label{bound:2Lipschitz}
\|\B^-\left(\Delta\SSS_j(\psi,s)-\Delta \SSS_j(\psi',s)\right)\|_{2+j}\leq B_{j+4}(M_{11}^j(\rr) + M_{12}^j(\rr))\|\phi - \phi'\|_3.
\end{equation}
Then, we get  $L_1(\rr)$ adding the results in~\eqref{bound:1Lipschitz} and~\eqref{bound:2Lipschitz}. Furthermore, applying the second item of Corollary~\ref{cor:linearB}, we obtain $L_2(\rr)$ using that
\begin{equation*}
\|\B^-\left(\Delta\SSS_j(\psi,s)-\Delta \SSS_j(\psi',s)\right)\|_{3+j}\leq C_{j+2}(M_{11}^j(\rr) + M_{12}^j(\rr))\|\phi - \phi'\|_3,
\end{equation*}
\end{proof}

\begin{remark}\label{rmk:L1L2}
%\textbf{Marcel: No entenc aquesta remark per al cas general (hi ha molts termes!!). Per a l'exemple sí.}
Notice that $B_{m}$ is decreasing with respect to $m$, but $C_{m}$ is increasing. It is not difficult to check that when $\rr \geq C_7/B_8\geq 3^9 \cdot 32 /(2^{12}\cdot 5\pi) \sim 9.7895$ then
$L_{1}(\rr)\geq L_{2}(\rr)$. This fact will be used in Section \ref{sec:example1} and \ref{sec:example2}.
\end{remark}

\subsubsection{Setting $\rr_*$ for the existence result}\label{sec:step5}
We choose $\rr_*\geq\rr_*^2$ satisfying
\[
L(\rr_*)\leq \frac{1}{2},
\]
so that Lemma~\ref{lem:DSgen} can be applied for $\rr\geq \rr_*$.
Then, the operator $\FF^- : B(M_0) \to B(M_0)$ is contractive. Indeed,
\[
\|\FF^-(\psi)\|_{3} \leq \|\FF^-(0)\|_{3} + \|\FF^-(\psi)-\FF^-(0)\|_{3} = \frac{M_0}{2} + LM_0\leq M_0
\]
provided $L\leq \frac{1}{2}$. Therefore, the operator has a fixed point $\psi^-$ defined in $\DDD^{-}_{\rr^*}$ (see \eqref{def:domainInnner}) and therefore satisfies
\begin{equation}\label{def:psiestimate}
|\phi^-(s)|,|\varphi^-(s)|\leq \frac{M_0}{|s|^3}.
\end{equation}
%From now on we  work with $\rr \geq \rr%_*^3$

\subsubsection{Explicit bounds for the norm of the linear operator $\GG$}\label{sec:step6}
%\textcolor{red}{Step 6 and 7 are not needed to prove that the $\Theta\neq 0$. It is enough to evaluate $\phi^+-\phi^-$ at $\rho_*^3$.}
The next lemma gives estimates for the  linear operator $\GG$  defined in~\eqref{def:OperatorG} with respect to the norm introduced in~\eqref{def:nomexp}.
%We want to give estimates for $\|\GG\|$.
\begin{lemma}
Take $\rr\geq \rr_*$ and let
\begin{equation}\label{def:A1A2}
\begin{split}
A_1(\rr)&=\frac{M_{11}^2}{\rr} + \frac{M_{11}^3 + M_{12}^2}{2\rr^2} +
\frac{M_{11}^4+M_{12}^3}{3\rr^3}+ \frac{M_{12}^4}{4\rr^4}
\\
A_2(\rr)&=\frac{M_{12}^2}{2\al \rr^2} + \frac{M_{11}^2 + M_{12}^3}{2\al \rr^3} +
\frac{M_{11}^3+M_{12}^4}{2 \al \rr^4}+ \frac{M_{11}^4}{2\al \rr^5}
\end{split}
\end{equation}
where $M_{ij}=M_{ij}(\rr)$ are the constants introduced in \eqref{def:Mij}. Then, we have that, for $s$ with $\Re s=0$ and $\Im s\leq -\rr$,
\begin{equation}\label{def:estimateG}
\begin{split}
\left|s^{-1}e^{i\al (s+\h \log s)}\GG_1(\Delta \psi)\right|\leq&  A_1(\rr)\| \Delta\psi\|\\
\left|e^{i\al (s+\h \log s)}\GG_2(\Delta \psi)\right|\leq &  A_2(\rr)\| \Delta\psi\|.
\end{split}
\end{equation}
In particular,
\begin{equation}\label{eq:boundoperator}
 \|\GG(\Delta \psi)\|\leq  A(\rr)\| \Delta\psi\|,
\end{equation}
with
\begin{equation}\label{def:A}
A=A(\rr) = \max \left \{ A_1(\rr),A_2(\rr)\right \}.
\end{equation}
\end{lemma}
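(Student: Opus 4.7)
The plan is to bound each of the two components of $\GG$ separately by direct contour integration along the axis $E=\{\Re s=0,\,\Im s\le -\rho\}$, exploiting the exponential weights built into the norm~\eqref{def:nomexp}. The preparatory step is to extract pointwise bounds on the matrix entries of $\widetilde{\KK}(s)=\int_0^1 D\widetilde{\SSS}(\psi^-(s)+t\Delta\psi(s),s)\,dt$. Since $\widetilde{\SSS}=\SSS-\frac{1}{s}(-i\al\h\phi,\,i\al\h\varphi)$, its partial derivatives obey the bounds of Lemma~\ref{lem:DSgen} with the leading $M_{11}^1/|s|$ contribution removed from $\partial_\phi\SSS_1$ and $\partial_\varphi\SSS_2$, while the off-diagonal derivatives are unchanged. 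Hence, for $s\in\DDD^-_\rho$,
\[
|\widetilde{\KK}_{11}(s)|,\,|\widetilde{\KK}_{22}(s)|\le \frac{M_{11}^2}{|s|^2}+\frac{M_{11}^3}{|s|^3}+\frac{M_{11}^4}{|s|^4},\qquad |\widetilde{\KK}_{12}(s)|,\,|\widetilde{\KK}_{21}(s)|\le \frac{M_{12}^2}{|s|^2}+\frac{M_{12}^3}{|s|^3}+\frac{M_{12}^4}{|s|^4}.
\]

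For the first component, I would parametrize $s=-iy$ and $t=-ir$ with $r\in[\rho,y]$. A direct computation gives $|e^{i\al(t+\h\log t)}|=e^{\al r+\al\h\pi/2}$, while the definition of $\|\cdot\|$ yields $|\Delta\phi(-ir)|\le r\,e^{-\al r-\al\h\pi/2}\|\Delta\psi\|$ and $|\Delta\varphi(-ir)|\le e^{-\al r-\al\h\pi/2}\|\Delta\psi\|$. Substituting into the formula for $\GG_1$ in~\eqref{def:OperatorG}, all exponentials cancel and one is left with the real integral
\[
\left|s^{-1}e^{i\al(s+\h\log s)}\GG_1(\Delta\psi)(s)\right|\le \int_\rho^\infty\!\left(|\widetilde{\KK}_{11}(-ir)|+\frac{|\widetilde{\KK}_{12}(-ir)|}{r}\right)dr\;\|\Delta\psi\|,
\]
and termwise integration of the polynomial bounds above produces exactly $A_1(\rho)$.

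The main obstacle is the estimate for $\GG_2$. The prefactor $s\,e^{i\al(s+\h\log s)}$ combined with the extra $e^{i\al(s+\h\log s)}$ from the norm produces a factor $y\,e^{2\al y+\al\h\pi}$ that \emph{grows} exponentially as $y\to\infty$. Substituting the norm bounds inside the integral produces a compensating factor $e^{-2\al r-\al\h\pi}$, so after the $\al\h\pi$ constants cancel one is reduced to controlling integrals of the form
\[
y\,e^{2\al y}\int_y^\infty\frac{e^{-2\al r}}{r^n}\,dr,
\]
which are bounded by $1/(2\al\rho^{n-1})$ (or a comparable power of $\rho^{-1}$) via the elementary inequality $\int_y^\infty e^{-2\al r}r^{-n}\,dr\le e^{-2\al y}/(2\al y^n)$. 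The growing exponential $e^{2\al y}$ is cancelled and one picks up the factor $1/(2\al)$ which is the reciprocal of the exponential decay rate. Summing the contributions from $|\widetilde{\KK}_{21}|$ and $|\widetilde{\KK}_{22}|/r$ termwise and using $y\ge\rho$ yields $A_2(\rho)$.

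The combined bound~\eqref{eq:boundoperator} then follows by taking $A(\rho)=\max\{A_1(\rho),A_2(\rho)\}$. The only conceptual difficulty is the exponential cancellation in the $\GG_2$ estimate; the rest is routine polynomial integration and bookkeeping. One should also verify that the passage from $\SSS$ to $\widetilde{\SSS}$ truly eliminates the $M_{11}^1/|s|$ contribution on the diagonal of $\widetilde{\KK}$, since otherwise the integral $\int_\rho^\infty|\widetilde{\KK}_{11}(-ir)|\,dr$ in the $\GG_1$ step would diverge and the conclusion $A_j(\rho)\to 0$ (used later for the invertibility of $\mathrm{Id}-\GG$) would fail.
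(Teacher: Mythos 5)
Your approach is the same as the paper's: bound the entries of $\widetilde{\KK}$ via Lemma~\ref{lem:DSgen}, parametrize the contour along $\{\Re s=0\}$, use the weights in~\eqref{def:nomexp} to cancel the $e^{\pm i\al(s+\h\log s)}$ factors (the $e^{\al\h\pi/2}$'s cancel in pairs), integrate the resulting polynomial bounds in $r=|t|$ for $\GG_1$, and for $\GG_2$ compare against the decaying exponential $e^{-2\al r}$ to extract the factor $1/(2\al)$. Your remark about verifying that subtracting $\tfrac{1}{s}(-i\al\h\phi,\,i\al\h\varphi)$ cancels the $M_{11}^1/|s|$ term on the diagonal of $\widetilde{\KK}$ is exactly the implicit step the paper dispatches with the phrase ``by construction,'' and is a genuine cancellation worth checking (the leading $1/s$ contribution of $\partial_\phi\SSS_1$ is $-i\al\h/s$, which is precisely what is subtracted).

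One bookkeeping point you should not wave away: your elementary inequality $\int_y^\infty e^{-2\al r}r^{-n}\,dr\le e^{-2\al y}/(2\al y^n)$, together with the prefactor $\bigl|s\,e^{2i\al(s+\h\log s)}\bigr|=y\,e^{2\al y}e^{\al\h\pi}$, produces $\tfrac{1}{2\al\,y^{\,n-1}}\le \tfrac{1}{2\al\rho^{\,n-1}}$, whereas the denominators in the stated $A_2(\rho)$ are $\rho^{\,n}$ (for instance the $M_{12}^2/r^2$ term contributes $\tfrac{M_{12}^2}{2\al\rho}$ under your estimate, not $\tfrac{M_{12}^2}{2\al\rho^2}$). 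The paper's own proof passes directly from the integral to the final line without recording this step, so the discrepancy is not one you can resolve by citing the paper; either carry the $|s|=y$ factor through carefully and see whether the stated $A_2(\rho)$ survives, or accept the weaker (by one power of $\rho$) bound, which still tends to zero and still makes $\mathrm{Id}-\GG$ invertible for $\rho$ large. Saying the sum ``yields $A_2(\rho)$'' is the one place your write-up asserts more than your own inequality delivers.
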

\begin{proof} In this proof we  omit the dependence on $\rho$ of $M_{i,j}^k$. 
We use Lemma~\ref{lem:DSgen} to bound $\widetilde{\KK}_{ij}$, the components of the matrix $\widetilde{\KK}$ in \eqref{def:Ktilde}. By construction, if
$\psi\in B(M_0)$,
\[
\begin{split}
\left|   \widetilde{\KK}_{11}(\psi,s)\right| ,\left|\widetilde{\KK}_{22}(\psi,s)\right| & \leq
   \frac{M_{11}^2}{|s|^2} +  \frac{M_{11}^3}{|s|^3}+  \frac{M_{11}^4}{|s|^4} \\
  \left| \widetilde{\KK}_{12}(\psi,s)\right| ,\left|\widetilde{\KK}_{21}(\psi,s)\right| & \leq
   \frac{M_{12}^2}{|s|^2} +  \frac{M_{12}^3}{|s|^3}+  \frac{M_{12}^4}{|s|^4}.
\end{split}
\]
Then, for the first component,
\begin{equation}\label{def:BoundG1gen}
\begin{aligned}
 \left|s^{-1}e^{i\al (s+\h \log s)}\GG_1(\Delta \psi)\right|&\leq \left|\int_{-i\rr}^s\frac{e^{i\al( t+\h \log t)}}{t}\left(\widetilde{\KK}_{11}\Delta\phi
 + \widetilde{\KK}_{12}\Delta\varphi\right)dt\right|\\
\leq &\int_{-i\rr}^s\left(
\frac{M_{11}^2}{|t|^2}+\frac{M_{11}^3}{|t|^3}+\frac{M_{11}^4}{|t|^4} \right )
\|\Delta\phi\| \, dt \\ &+
\int_{-i \rr}^s \left (
\frac{M_{12}^2}{|t|^3}+\frac{M_{12}^3}{|t|^4}+\frac{M_{12}^4}{|t|^5}\right)
\|\Delta\varphi\|dt\\
\leq &
\left[
\frac{M_{11}^2}{\rr}+\frac{M_{11}^3}{2\rr^2}+\frac{M_{11}^4}{3\rr^3}
\right ]\|\Delta\phi\| +
\left[
\frac{M_{12}^2}{2\rr ^2}+\frac{M_{12}^3}{3 \rr^3}+\frac{M_{12}^4}{4\rr^4}
\right ]\|\Delta\varphi
\\
 \leq &\left (\frac{M_{11}^2}{\rr} + \frac{M_{11}^3 + M_{12}^2}{2\rr^2} +
\frac{M_{11}^4+M_{12}^3}{3\rr^3}+ \frac{M_{12}^4}{4\rr^4}
\right ) \|\Delta \psi\|.
\end{aligned}
\end{equation}
For the second component, using that $\big |e^{i\al \h \log t}\big | = e^{\al \h \pi/2}$,
\[
\begin{split}
 \left|e^{i\al (s+\h \log s)}\GG_2(\Delta \psi)\right|\leq &\left|se^{2i\al (s+\h \log s)}\int_{-\infty}^s\frac{e^{-i\al ( t+\h \log t)}}{t}
 \left(\widetilde{\KK}_{21}\Delta\phi+
 \widetilde{\KK}_{22}\Delta\varphi\right)dt\right|\\
\leq &\left|s\right|e^{-2\al\Im s}\int_{-\infty}^se^{2\al \Im t}
\left(\left[
\frac{M_{12}^2}{|t|^2}+\frac{M_{12}^3}{|t|^3}+\frac{M_{12}^4}{|t|^4}
\right ]\|\Delta\phi\| \right )\, dt \\ & +
|s| e^{-2 \al \Im s} \int_{-\infty}^s e^{2 \al \Im t} \left(
\frac{M_{11}^2}{|t|^3}+\frac{M_{11}^3}{|t|^4}+\frac{M_{11}^4}{|t|^5}
\|\Delta\varphi\|\right)\,dt\\
&\leq
\left[
\frac{M_{12}^2}{2\al \rr^2}+\frac{M_{12}^3}{2\al\rr^3}+\frac{M_{12}^4}{2\al \rr^4}
\right ]\|\Delta\phi\|+
\left[
\frac{M_{11}^2}{2 \al \rr^3}+\frac{M_{11}^3}{2\al  \rr^4}+\frac{M_{11}^4}{2\al\rr^5} \right ]
\|\Delta\varphi \|
\\
& \leq \left (\frac{M_{12}^2}{2\al \rr^2} + \frac{M_{11}^2 + M_{12}^3}{2\al \rr^3} +
\frac{M_{11}^3+M_{12}^4}{2 \al \rr^4}+ \frac{M_{11}^4}{2\al \rr^5}
\right ) \|\Delta \psi\|.
\end{split}
\]
and the result is proven.
\end{proof}

\subsubsection{Computation of the Stokes constant}\label{sec:step7}
Using the estimates of the operator $\GG$ given in \eqref{eq:boundoperator} we can provide a rigorous computation of the Stokes constant.

Let $\rho_0\geq \rho_*$ be such that 
$$
A(\rho_0)< \frac{1}{2}.
$$
Then, the constant $\overline{M}(\rho_0)$ defined in~\eqref{def:Mbarrho} satisfies
\begin{equation}\label{def:Mbarrhosec7}
\overline{M}(\rho_0) = \frac{A_1(\rho_0)}{1-A(\rho_0)} <\frac{A(\rho_0)}{1-A(\rho_0)}<1
\end{equation}
and, as a consequence,  
\[
\Theta\in \left[\kk_0(\rr_0)(1-\overline{M}(\rr_0)), \kk_0(\rr_0)(1+\overline{M}(\rr_0))\right].
\]

In the next section we give the precise algorithm which allows, by means of computer rigorous computations, to compute $\Theta$ with a previous known accuracy.
This algorithm is applied to two  concrete examples in Sections~\ref{sec:example1} and~ \ref{sec:example2}.
%Indeed, by Theorem \ref{th:inner}, it is enough to check that $\psi^+-\psi^-\neq 0$ at $z=\rho^*$. 

\subsection{Computing the Stokes constant: algorithm }\label{sec:AlgorithmMethod1}
We describe the steps needed to obtain the values of $\rr_*$ and $\rho_0$ which guarantees that $\psi^+,\psi^-$ are defined for $s\in (-i\rho_*, -i\infty)$ and a good accuracy of $\Theta$.
\begin{itemize}
    \item Step 1: Compute the constants $a_3,\h$ and $C_{F}^0,C_{H}^0,\overline{C}_H^0$, $C_F^{\phi,\varphi}$, $C_{H}^{\phi,\varphi}$  which satisfy \eqref{def:estiamatesFH} and \eqref{constants2} and $\overline{C}_H$, $\overline{C}_F$ given in \eqref{def:Cbar}.
    \item Step 2: Take  $\rr_*^1 \geq\max\{2, \overline{C}_H^0\}$ and compute, for  $\rr\geq\rr_*^1$, the constants $\mathcal{C}_0(\rr)$ given in Lemma \ref{lem:firstiterationgen} and  $M_0(\rho)$ defined in \eqref{def:M0}.
    \item Step 3: Choose $\rr_*^2\geq \rr_*^1$ such that \eqref{def:minM0} is satisfied.  Compute also  the constants $M^j_{11}(\rho)$, $j=1,2,3,4$ and $M_{12}^j(\rho)$, $j=2,3,4$, in \eqref{def:Mij}, for  $\rr\geq\rr_*^2$.
    \item Step 4: Compute the constants $L_1(\rho)$ and $L_2(\rho)$ in \eqref{def:Ls} for  $\rr\geq\rr_*^2$.
    \item Step 5: Choose $\rr_*\geq \rr_*^2$ satisfying
    \[
    L(\rr_*)=\min\{L_1(\rho_*),L_2(\rho_*)\}\leq \frac{1}{2}.
    \]
    \item Step 6: Take  $\rr_*$ and check that the difference 
    \[
     \psi^+(-i\rr_*)-\psi^-(-i\rr_*)\neq 0.
     \]
     \item Step 7: For $\rr\ge\rr_*$, compute the constants $A_1(\rr)$ and $A_2(\rr)$ in \eqref{def:A1A2} and $A(\rr)$ in \eqref{def:A}.
     \item Step 8: Compute $\rr_0\ge\rr_*$ such that $A(\rr_0)\leq 1/2$. Then, %, for $\rr>\rr_0$,
     compute $\kk_0(\rr_0)$ in \eqref{def:kapparho} and $\overline{M}(\rr_0)$ in~\eqref{def:Mbarrhosec7}. 
         \end{itemize} 
Therefore, the Stokes constant satisfies
     \begin{equation}
\Theta\in \left[\kk_0(\rr_0)(1-\overline{M}(\rr_0)), \kk_0(\rr_0)(1+\overline{M}(\rr_0))\right]. \label{eq:Stokes-step-8}
\end{equation}
\begin{remark}
By Theorem~\ref{th:inner}, the first 6 steps allows us to check whether $\Theta\neq 0$ or not.
\end{remark}

\section{Examples}\label{sec:examples}
To illustrate the algorithm we consider two concrete examples of analytic unfoldings of a Hopf-Zero singularity~\eqref{initsys} whose corresponding inner equation can be found in~\eqref{almostinner}.
In both cases, we prove that the associated constants $\Theta$ does not vanish and give rigorous estimates for them. 

\subsection{The first example}\label{sec:example1}
 As first example we take
\begin{equation}\label{def:Parameters}
\al=1,\qquad \qquad b=1,\qquad g=h=0\qquad \text{and}\qquad f(X,Y,Z,\d)=Z^3.
\end{equation}
This  corresponds to  $F_1(\phi,\varphi,s)=-s^{-3}$, $F_2=F_1$ and $H=0$.
The inner equation \eqref{eq:InnerModifiedgen} associated to this model is the following
\begin{equation}\label{eq:InnerModified1}
 \begin{pmatrix} \frac{d\phi}{ds}\\ \frac{d\varphi}{ds}\end{pmatrix}=\AAA(s) \begin{pmatrix}\phi\\ \varphi\end{pmatrix}+\SSS(\phi,\varphi,s)
\end{equation}
with
\begin{equation}\label{def:R}
 \SSS(\phi,\varphi,s)=\begin{pmatrix}
                  \SSS_1(\phi,\varphi,s)\\
                  \SSS_2(\phi,\varphi,s)
                   \end{pmatrix}
=
 \begin{pmatrix}
                \dps   \frac{\left(i-\frac{1}{s}\right)\varphi\phi^2
s^2-\frac{1}{s^3}}{1+\varphi\phi s^2}\\
                \dps   \frac{\left(i+\frac{1}{s}\right)\varphi^2\phi
s^2-\frac{1}{s^3}}{1+\varphi\phi s^2}
                   \end{pmatrix}
\end{equation}

%Note that the chosen unfolding is reversible with respect to the involution
%\begin{equation}\label{def:involution}
 %\mathcal{I}(\phi,\varphi,s)=(\varphi,\phi, s).
%\end{equation}
%This is not crucial but makes the analysis simpler.

Now we follow the steps of the algorithm in Section~\ref{sec:AlgorithmMethod1}. 

\noindent
\textbf{Step 1.}
In this case $\h=0$ and moreover, among all the constants defined in Step 1, the only one that is different form $0$ is $a_3=1$.

\noindent
\textbf{Step 2.} In this case we have that $\rr_*^1=2$ and $\mathcal{C}_0=0$ so that $M_0=\frac{22}{3}$ is independent on $\rho$.

\noindent
\textbf{Step 3.} We have that $\rr_*^2$ has to be such that
$$
\sqrt{M_0}=\sqrt{\frac{22}{3}} < \rho_*^2.
$$
In addition $M_{ij}^{1, 2, 3}=0$ and
\[
\begin{split}
M_{11}^4(\rho) &= \frac{M_0}{\left (1 - \frac{M_0^2}{\rr^4}\right )^2}  \left (
1 + M_0 + \left (M_0 + \frac{M_0}{\rr}\right )\left (1 - \frac{M_0^2}{\rr^4}\right ) + \frac{M_0}{\rr}\right )
\\&= \frac{M_0} {\left (1 - \frac{M_0^2}{\rr^4}\right )^2}
\left (1 + M_0\left (1+ \frac{1}{\rr}\right ) \left ( 2- \frac{M_0^2}{\rr^4}\right ) \right )
\\
M_{12}^4(\rho) &= \frac{M_0}{\left (1 - \frac{M_0^2}{\rr^4}\right )^2}  \left (
1 + M_0 \left (1 + \frac{1}{\rr}\right ) \right ).
\end{split}
\]

\noindent
\textbf{Step 4 and Step 5.}
One can check that
$$
L_1(\rr)=B_8\frac{M_{11}^4(\rr) + M_{12}^4(\rr)}{\rr^3}\leq \frac{1}{2}
$$
for $\rr \geq \rr_*=9.7895$.
%we obtain that $\rr\geq \rr_* \geq 5.9850$. 
Under this condition, as we claimed in Remark~\ref{rmk:L1L2},
\[
L(\rr)\leq \frac{1}{2}.
\]
Therefore we can guarantee the existence of $\psi^{\pm}$ for $\rr \geq \rr_*=9.7895$.

%\textbf{No s'hauria de calcular $L_2$ o agafar $\rho\geq 10$?}
%\noindent

\noindent\textbf{Step 6.} Now it only remains to compute 
 \[
     \psi^+(-i\rr_*)-\psi^-(-i\rr_*)\neq 0.
     \]
By  means of rigorous computer computations, which are discussed in more detail in Appendix \ref{app:Delta-comp}, we obtain that there exists a 
\begin{equation}\rho_* \in  [15.99999965, 16.00000035] \label{eq:rho-ex1}
\end{equation}
for which
\begin{align}
& \psi^{+}\left(  -i\rho_{\ast
}\right)  -\psi^{-}\left(  -i\rho_{\ast}\right) \label{eq:Delta-ex1} \\
& =\left(
\begin{array}
[c]{c}%
\Delta\phi\left(  -i\rho_{\ast}\right)  \\
\Delta\varphi\left(  -i\rho_{\ast}\right)
\end{array}
\right) \notag \\
& \in \left(
\begin{array}
[c]{c}
\lbrack-4.50096\cdot10^{-10}, 4.50096\cdot10^{-10}]-[1.88812\cdot10^{-6},1.88897\cdot10^{-6}]i\\
\lbrack-3.85539\cdot10^{-10}, 3.85539\cdot10^{-10}]+[-4.01544\cdot10^{-10}, 3.4832\cdot10^{-10}]i
\end{array}
\right). \notag
\end{align}

%\textbf{Write the results}
Therefore, the Stokes constant associated to the first example \eqref{def:Parameters} does not vanish. 

Now we follow \textbf{Step 7.} and \textbf{Step 8.} to provide rigorous accurate estimates for it.

\noindent
\textbf{Step 7.}
The constants $A_1$ and $A_2$ in \eqref{def:A1A2} are
\begin{equation}\label{def:A:Example1}
A_1(\rr)=
\frac{M_{11}^4(\rho)}{3\rr^3}+ \frac{M_{12}^4(\rho)}{4\rr^4},
\qquad 
A_2(\rr)=\frac{M_{12}^4(\rho)}{2 \rr^4}+ \frac{M_{11}^4(\rho)}{2\rr^5}
\end{equation}
which give  the constant $A(\rr)=\max\{A_1(\rr),A_2(\rr)\}$ in \eqref{def:A}. We obtain 
\begin{align*}
A_1(\rho_*) & \in [0.010155523, 0.010155525], \\
A_2(\rho_*) & \in [0.0009597786, 0.0009597788], \\
A(\rho_*)  & = A_1(\rho_*) < 1/2.
\end{align*}

\noindent
\textbf{Step 8.} One can choose $\rho_0=\rr_*$. Then, by (\ref{def:kapparho}), (\ref{def:Mbarrhosec7}) and (\ref{eq:Stokes-step-8}), one obtains
\begin{equation}
\Theta\in [1.0378681, 1.0598665]+[-0.000253, 0.000253]i\label{eq:S-initial}
\end{equation}
We can see that the accuracy of the computation is roughly $2\cdot 10^{-2}$.

\subsection{The second example}\label{sec:example2}
The second example, breaks the reversibility. It consists in taking $\alpha=b=1$, $g=h=0$ and $f(X,Y,Z,\d)=Z^3+ 2XYZ$ which corresponds to
$$
F_1(\phi,\varphi,s) = F_2(\phi,\varphi,s)= -\frac{1}{s^3}+ \frac{i}{s} (\phi^2 -\varphi^2),\qquad H=0
$$
and then, the inner equation associated to this unfolding is:
\begin{equation}\label{eq:InnerModified2}
 \begin{pmatrix} \frac{d\phi}{ds}\\ \frac{d\varphi}{ds}\end{pmatrix}=\mathcal{A}(s) \begin{pmatrix}\phi\\ \varphi\end{pmatrix}+\mathcal{S}(\phi,\varphi,s)
\end{equation}
with $\mathcal{S}$ defined as 
\begin{equation*}
 \SSS(\phi,\varphi,s)=\begin{pmatrix}
                  \SSS_1(\phi,\varphi,s)\\
                  \SSS_2(\phi,\varphi,s)
                   \end{pmatrix}
=
 \begin{pmatrix}
                \dps   \frac{\left(i-\frac{1}{s}\right)\varphi\phi^2
s^2-\frac{1}{s^3}+\frac{i}{2s} (\phi^2 - \varphi^2)}{1+\varphi\phi s^2}\\
                \dps   \frac{\left(i+\frac{1}{s}\right)\varphi^2\phi
s^2-\frac{1}{s^3}+ \frac{i}{2s}(\phi^2 - \varphi^2)}{1+\varphi\phi s^2}
                   \end{pmatrix}.
\end{equation*}

\noindent
\textbf{Step 1.}
We have that all the constants are zero except $a_3=1$, $C_F=2$, $C_F^\phi=C_F^\varphi=1$.

\noindent
\textbf{Step 2.} As for the first example $\rr_*^1=2$ and $\mathcal{C}_0=0$ so that $M_0=\frac{22}{3}$.

\noindent
\textbf{Step 3.} We have that $\rr_*^2$ has to be such that
$$
\sqrt{M_0}=\sqrt{\frac{22}{3}} < \rho_*^2.
$$
In addition $M_{ij}^{1, 3}(\rho)=0$ being
\[
\begin{split}
M_{11}^2(\rho) &= M_{12}^2 (\rho)= \frac{2}{1- \frac{M_0^2}{\rr^4}}\\
M_{11}^4(\rho) &= \frac{M_0}{\left (1 - \frac{M_0^2}{\rr^4}\right )^2}  \left (
2 + M_0 + \left (M_0 + \frac{M_0}{\rr}\right )\left (1 - \frac{M_0^2}{\rr^4}\right ) + \frac{M_0}{\rr}\right )
\\&= \frac{M_0} {\left (1 - \frac{M_0^2}{\rr^4}\right )^2}
\left (2 + M_0\left (1+ \frac{1}{\rr}\right ) \left ( 2- \frac{M_0^2}{\rr^4}\right ) \right )
\\
M_{12}^4(\rho) &= \frac{M_0}{\left (1 - \frac{M_0^2}{\rr^4}\right )^2}  \left (
2+ M_0 \left (1 + \frac{1}{\rr}\right ) \right ).
\end{split}
\]

\noindent
\textbf{Step 4 and Step 5.}
One can check that 
$$
L_1(\rr)=B_6 \frac{M_{11}^2(\rho)+ M_{12}^2(\rho)}{\rr} + B_8\frac{M_{11}^4(\rr) + M_{12}^4(\rr)}{\rr^3}\leq \frac{1}{2}
$$
for $\rr\geq \rr_* \geq 9.7895$. Therefore, under this condition, using Remark \ref{rmk:L1L2} as for Example 1,  we can guarantee that $L(\rr)\geq 1/2$ and, then, the existence of $\psi^{\pm}$.

\begin{remark} \label{rmk:secondexample}
For  Example 2, we  can obtain more accurate estimates by computing directly the derivatives $\partial_{\phi,\varphi} \mathcal{S}$. Indeed,
performing straightforward computations
we obtain that $M_{ij}^1(\rr)=M_{ij}^2 (\rr) = M_{ij}^3(\rr)=0$ and
\begin{equation}\label{betterboundsE2}
\begin{split}
    M_{11}^4(\rr) &= \frac{M_0} {\left (1 - \frac{M_0^2}{\rr^4}\right )^2}
\left (3 + M_0 \left (1+ \frac{1}{\rr}\right ) \left ( 2- \frac{M_0^2}{\rr^4}\right )\right )  \\
    M_{12}^4(\rr) &= \frac{M_0}{\left (1 - \frac{M_0^2}{\rr^4}\right )^2}  \left (
3 + M_0 \left (1 + \frac{1}{\rr}\right ) \right ).
\end{split}
\end{equation}

Therefore
\[
L_1(\rr)= B_8 \frac{M_{11}^4(\rr) + M_{12}^4(\rr)}{\rr^3}=
\frac{5\pi}{32} \frac{M_{11}^4(\rr) + M_{12}^4(\rr)}{\rr^3}.
\]
\end{remark}

Since we need to be as precise as possible, we  use the constants $M_{ij}^k(\rho)$ defined in Remark~\ref{rmk:secondexample} instead of the constants provided by the general method. 

\noindent\textbf{Step 6.} We compute
$\psi^+(-i\rr_*)-\psi^-(-i\rr_*)$. By  means of rigorous computer computations we obtain that there exists a $\rho_*$
\begin{equation}
\rho_*\in [15.99999965, 16.00000035], \label{eq:rho-ex2}
\end{equation}
for which
\begin{align}
& \psi^{+}\left(  -i\rho_{\ast
}\right)  -\psi^{-}\left(  -i\rho_{\ast}\right)  \label{eq:Delta-ex2}\\
&  =\left(
\begin{array}
[c]{c}%
\Delta\phi\left(  -i\rho_{\ast}\right)  \\
\Delta\varphi\left(  -i\rho_{\ast}\right)
\end{array}
\right)  \nonumber\\
&  \left(
\begin{array}
[c]{c}%
\lbrack8.63066\cdot10^{-9}, 9.53086\cdot10^{-9}]-[ 1.88812\cdot10^{-6},1.88897\cdot10^{-6}]i\\
\lbrack-4.20777\cdot10^{-10}, 3.50313\cdot10^{-10}]+[-4.01721\cdot10^{-10}, 3.48156\cdot10^{-10}]i
\end{array}
\right).  \nonumber
\end{align}

This implies that $\Theta\neq 0$. Now we perform the last two steps in the algorithm.

\noindent
\textbf{Step 7.} For Example 2, we have that
\[
A_1(\rr)=\frac{M_{11}^4(\rr)}{3\rr^3}+ \frac{M_{12}^4(\rr)}{4\rr^4},
\qquad 
A_2(\rr)=\frac{M_{12}^4(\rr)}{2  \rr^4}+ \frac{M_{11}^4(\rr)}{2 \rr^5},
\]
and $A(\rr)=\max \left \{ A_1(\rr),A_2(\rr)\right \}$, 
with $M_{11}^4(\rr),M_{12}^4(\rr)$ defined in~\eqref{betterboundsE2}. We obtain
\begin{align*}
A_1(\rho_*) &\in [0.0114071016, 0.0114071033], \\
A_2(\rho_*) &\in [0.00066984056, 0.00066984069], \\
A(\rho_*)&=A_1(\rho_*) < 1/2.
\end{align*}

\noindent
\textbf{Step 8.} 
By means of rigorous computer validation, for $\rr_0=\rr_*$ using (\ref{def:kapparho}), (\ref{def:Mbarrhosec7}) and (\ref{eq:Stokes-step-8}) we obtain
\begin{equation*}
\Theta\in [1.036525, 1.0612062]+[0.004738, 0.005355]i.
\end{equation*}
We can see that the accuracy of the computation is roughly $2.5\cdot 10^{-1}$.

\section{Improving the computation of the Stokes constant}\label{sec:StokesComputeExtrap}
%\section{A method to compute the Stokes constant}

In this section we give an improvement of the  Steps 7 and 8 in Section~\ref{sec:AlgorithmMethod1} to obtain accurate estimates for the Stokes constant $\Theta$. We explain this improvement for the Example 1 given in Section \ref{sec:example1} but the method we present is general and can be applied to any system.

Recall that, using \eqref{def:StokesLimit} and  \eqref{eq:diferenciapuntfix},
\[
\begin{split}
 \Theta &=\lim_{\Im s\to-\infty}s^{-1}e^{i\al s}\Delta\phi(s)=\kk_0+\lim_{\Im s\to-\infty}s^{-1}e^{i\al s}\GG_1\left(\Delta\psi(s)\right)\\
 &=
 \kk_0+\lim_{\Im s\to-\infty}s^{-1}e^{i\al s}\GG_1\left(\Delta\psi_0(s)\right)+
 \lim_{\Im s\to-\infty}s^{-1}e^{i\al s}\GG_1\left(\GG(\Delta\psi)(s)\right).
\end{split}
\]
Therefore, by \eqref{def:estimateG}, \eqref{eq:boundoperator}  and \eqref{def:Estkappa0}, the remainder
\[
\mathcal{E}_{\Theta}=\Theta-\kk_0-\lim_{\Im s\to-\infty}s^{-1}e^{i\al s}\GG_1\left(\Delta\psi_0(s)\right)
\]
satisfies
\[
\begin{split}
\left|\mathcal{E}_{\Theta}\right|&\le
 \sup_s\left|s^{-1}e^{i\al s}\GG_1\left(\GG(\Delta\psi)(s)\right)\right|  \le  A_1 \|\GG(\Delta \psi)\|
 \le A_1 A \|\Delta\psi\|\\
 &\leq  A_1A  \frac{|\kk_0|}{1-A}.
\end{split}
\]
where $A=A(\rho)=\max\{A_1,A_2\}$ and  $A_1, A_2$  are given in~\eqref{def:A:Example1}.

Using \eqref{def:OperatorG} and the fact that in Example 1, $h_0=0$ and therefore $\widetilde{\mathcal{K}}=\mathcal{K}$,
\begin{equation}\label{eq:improvtheta}
\begin{split}
 \lim_{\Im s\to-\infty}s^{-1}e^{i\al s}\GG_1\left(\Delta\psi_0(s)\right)
 =-\kk_0\int_{-i\infty}^{i\rho}\KK_{11}(t)dt = -i\kk_0\int_{-\infty}^{\rho}\KK_{11}(i\, r)dr,
\end{split}
\end{equation}
which implies
\begin{equation}\label{ThetaImprovement}
%\b%egin{split}
 \Theta  =
 \kk_0 -i\kk_0\int_{-\infty}^{\rho}\KK_{11}(i\, r)dr +\mathcal{E}_{\Theta}.
% \\
% \mathcal{E}_{\Theta}| & \le A_1A  \frac{\kk_0}{1-A}
 % \|\Delta\psi\|
%\%end{split}
\end{equation}
To obtain this integral we need an  approximation of  the coefficient $\KK_{11}$.

\begin{lemma}\label{lemma:K11Computation}
The function $\KK_{11}$  introduced in \eqref{def:K} associated to equation \eqref{eq:InnerModified1} satisfies 
\begin{equation}\label{formulaK11:0}
\begin{split}
\KK_{11}(s) &=
\frac{i\beta_1 }{s^4} +\frac{\beta_2}{ s^5}+
\frac{i\, \beta_3  }{ s^6}+\frac{\beta_4}{ s^7}
+ \mathcal{EKT}_{11}\\
\beta_1&=3, \ \beta_2= -6, \
\beta _3=-68, \beta_4=48
\end{split}
\end{equation}
and $\mathcal{EKT}_{11}$ satisfies
\begin{equation}\label{errorK11total}
|\mathcal{EKT}_{11}| \le \frac{B\RRR}{|s|^7}+\frac{B_{11}+B_{12}+B_{13}+B_{14}}{|s|^8}
\end{equation}
%\[
%\mathcal{EKT}_{11}= \mathcal{EK}_{11}
%+
%\mathcal{ER}_1 +\mathcal{ER}_2+\mathcal{ER}_3+\mathcal{ER}_4
%\]
where
\[
\begin{split}
B&=\frac{5\pi}{32} \left(M^4_{11}(\rr)+ M_{12}^4(\rr)\right)M_0+
  \frac{225 \pi }{2 }\\
\RRR&=\frac{
1+ 4 M_0(1+\frac{1}{\rho})+
3\frac{ M_0^2}{\rho^4}} {\left(1-\frac{M_0^2}{\rr^4}\right)^3}\\
B_{11}&= \frac{M_*^4 (1+\frac{1}{\rho})}{\left(1-\frac{M_*^2}{\rr^4}\right)^2}\\
  B_{12}&=M_*^5\frac{ (2M_*(1+\frac{1}{\rho})+1)(3+\frac{2M_*^2}{\rho^4})}
{\rho^4\left(1-\frac{M_*^2}{\rr^4}\right)^2}\\
  B_{13}&=2  M_*^3 \left (2M_* \left  (1+\frac{1}{\rho}\right )+1 \right )\\
  B_{14}&=800\left(1-\frac{1}{\rho}\right) 
\end{split}
\]
and
\[
M_*(\rr)=1+\frac{4}{\rr}+\frac{20}{\rr^2}+\frac{120}{\rr^3}.
\]
\end{lemma}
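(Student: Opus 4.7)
The strategy is to extract the leading powers of $1/s$ in $\KK_{11}(s)$ by substituting an accurate asymptotic approximation of $\psi^{\pm}$ into an explicit formula for $\partial_{\phi}\SSS_{1}$, and then to control the resulting errors through the Lipschitz bound of Lemma~\ref{lem:DSgen} together with the operator-norm bounds of Lemma~\ref{cor:linearB}.

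First, for the concrete $\SSS_{1}$ of Example~1 given in \eqref{def:R}, a direct differentiation with the abbreviation $u=\phi\varphi s^{2}$ yields
$$
\partial_{\phi}\SSS_{1}(\phi,\varphi,s)=\frac{\varphi/s+2(i-1/s)u+(i-1/s)u^{2}}{(1+u)^{2}}.
$$
Because $\Delta\psi$ is exponentially small along the ray $\{\operatorname{Re}s=0,\ \operatorname{Im}s\le-\rr\}$, the average over $t$ in the definition \eqref{def:K} of $\KK_{11}$ can be replaced by $\partial_{\phi}\SSS_{1}(\psi^{-}(s),s)$ up to an error bounded by $(M_{11}^{4}+M_{12}^{4})|\Delta\psi|/|s|^{4}$, which is absorbed into $\mathcal{EKT}_{11}$.

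Next, I would approximate $\psi^{-}$ by its first iterate $\psi_{(0)}^{-}=\FF^{-}(0)=\B^{-}(-s^{-3},-s^{-3})$. Each component is a Laplace-type integral $\int_{-\infty}^{0}e^{\pm it}(s+t)^{-4}\,dt$, to which repeated integration by parts yields a recurrence of the form $I_{n}=\mp i/s^{n}\mp n i\,I_{n+1}$ and therefore an asymptotic expansion whose first four terms have magnitudes $1,4,20,120$; their sum is exactly $M_{\ast}(\rr)/|s|^{3}$, which is the origin of $M_{\ast}$ in the constants $\RRR, B_{11},B_{12},B_{13}$. Substituting this truncated expansion into $\partial_{\phi}\SSS_{1}$, expanding $(1+u)^{-2}=1-2u+O(u^{2})$, and collecting the powers $1/s^{4},\ldots,1/s^{7}$ gives the coefficients $\beta_{1},\beta_{2},\beta_{3},\beta_{4}$; only the pieces $\varphi/s$ and $2(i-1/s)u$ in the numerator contribute at these orders, since $(i-1/s)u^{2}=O(|s|^{-8})$.

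Finally, the remainder \eqref{errorK11total} would be obtained by splitting $\mathcal{EKT}_{11}$ into three separately controlled pieces: (i) the substitution error $\partial_{\phi}\SSS_{1}(\psi^{-},s)-\partial_{\phi}\SSS_{1}(\psi^{-}_{(0)},s)$, bounded by combining the Lipschitz estimate $(M_{11}^{4}+M_{12}^{4})/|s|^{4}$ from Lemma~\ref{lem:DSgen} with the contraction bound $\|\psi^{-}-\psi_{(0)}^{-}\|\le B_{8}M_{0}$ obtained from $\psi^{-}-\psi_{(0)}^{-}=\B^{-}(\SSS(\psi^{-},\cdot)-\SSS(0,\cdot))$ via Lemma~\ref{cor:linearB} (this produces the summand $\tfrac{5\pi}{32}(M_{11}^{4}+M_{12}^{4})M_{0}$ inside $B$); (ii) the truncation error of the integration-by-parts expansion for $\psi_{(0)}^{-}$, whose tail is bounded through $|I_{N}(s)|\le\int_{-\infty}^{0}(t^{2}+\rr^{2})^{-N/2}dt$ together with the denominator $(1-M_{\ast}^{2}/\rr^{4})^{3}$ arising from the three factors of $(1+u)^{-1}$ that appear when $\partial_{\phi}\SSS_{1}$, $\SSS$, and the contraction argument are multiplied together (whence the factor $\RRR$); (iii) the explicit $O(|s|^{-8})$ contributions from $(i-1/s)u^{2}/(1+u)^{2}$ and from the tail beyond $1/s^{7}$ of the numerator expansion, which, after application of $\B^{-}$ where needed, yield the constants $B_{11},B_{12},B_{13},B_{14}$ together with the numerical factors $225\pi/2$ and $800$.

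The main obstacle is the algebraic bookkeeping: propagating four terms of both $\phi_{(0)}^{-}$ and $\varphi_{(0)}^{-}$ through the quadratic product $\phi\varphi$ and the rational function $\partial_{\phi}\SSS_{1}$, while cleanly separating the four coefficients $\beta_{j}$ from the remainder and tracing every constant in \eqref{errorK11total} back to a specific integration by parts, an operator norm $B_{m}$ from Lemma~\ref{cor:linearB}, or a Lipschitz bound from Lemma~\ref{lem:DSgen}. Once this bookkeeping is done, the stated formula and remainder bound follow directly.
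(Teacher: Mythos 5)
Your overall strategy matches the paper's: integration by parts of $\FF^-(0)$ to extract a four-term asymptotic $\psi_*$ with coefficients $1,4,20,120$, a contraction bound on $\psi^--\FF^-(0)$ plus the tail of the expansion to control $\EE^\pm=\psi^\pm-\psi_*$, substitution into $\partial_\phi\SSS_1$, expansion of $(1+u)^{-2}\approx1-2u$, and power-by-power collection to read off $\beta_1,\dots,\beta_4$.

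However, there is a genuine gap in your error item~(i). To bound $\partial_\phi\SSS_1(\psi^-,s)-\partial_\phi\SSS_1(\psi^-_{(0)},s)$ you need a Lipschitz bound for $\partial_\phi\SSS_1$, hence bounds on the \emph{second} derivatives $\partial_{\phi\phi}\SSS_1$ and $\partial_{\phi\varphi}\SSS_1$; but the quantity $(M_{11}^4+M_{12}^4)/|s|^4$ from Lemma~\ref{lem:DSgen} bounds $\partial_\phi\SSS_1$ itself, not its variation, and cannot do the job you assign it. The paper sidesteps this by applying the fundamental theorem of calculus in an auxiliary variable $r$ directly to the $t$-averaged integrand $\partial_\phi\SSS_1\bigl(\psi_*(s)+r(\EE^-(s)+t(\EE^+(s)-\EE^-(s)))\bigr)$; the error then comes multiplied against $\EE^\pm$ by $\int_0^1\partial_{\phi\phi}\SSS_1\,dr$ and $\int_0^1\partial_{\phi\varphi}\SSS_1\,dr$, which are computed explicitly. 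The cubed denominator in $\RRR$ is produced by differentiating $(1+\tD\varphi\phi)^{-2}$, i.e.\ it lives in the formulas for $\partial_{\phi\phi}\SSS_1$ and $\partial_{\phi\varphi}\SSS_1$, not (as you suggest) in a product of $\partial_\phi\SSS_1$, $\SSS$, and the contraction argument. Two further book-keeping corrections: $\RRR$ is written with $M_0$, not $M_*$ ($M_*$ enters only in $B_{11},B_{12},B_{13}$, which bound rational expressions in the explicit polynomial $\psi_*$); and the factor $\tfrac{225\pi}{2}$ in $B$ comes from the tail $|E_1^-|\le\tfrac{225\pi}{2|s|^6}$ of the integration-by-parts expansion of $\FF^-(0)$, so it belongs with your item~(ii), while $800$ in $B_{14}$ is the purely algebraic tail of $2\varphi_*\phi_* s^2(i-\tfrac1s)+\varphi_*/s$ beyond order $1/s^7$ (your item~(iii)).
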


This lemma is proven in  Section \ref{sec:Kcomputation}.

Now take $\rho_{\diamond}$ be such that $M_{*}(\rho)\leq M_0(\rho)$ for $\rho\geq \rho_{\diamond}$.
Using the expression of $\KK_{11}$ given in \eqref{formulaK11:0}  to compute   \eqref{eq:improvtheta} and using the remainder estimates in \eqref{errorK11total} we obtain:
\begin{equation}
\begin{split}
 -i\kk_0\int_{-\infty}^{\rho}\KK_{11}(i\, r)dr
 &= \kk_0\int_{-\infty}^{\rho}\frac{\beta_1}{r^4}-\frac{\beta_2}{r^5}-
\frac{\beta_3  }{r^6}+
\frac{\beta_4 }{r^7}
dr
+\mathcal{ES}\\
&=\kk_0\left(-\frac{\beta_1}{3 \rho^3}+\frac{\beta_2}{4\rho^4}-
\frac{\beta_3  }{5 \rho^5}-
\frac{\beta_4 }{6\rho^6}\right)
+\mathcal{ES}
\end{split}
\end{equation}
and
\[
\begin{split}
|\mathcal{ES}|&=|\mathcal{ES}(\rho)| \le \kk_0 \int_{\rho}^{\infty}
\frac{B\RRR}{r^7}+\frac{B_{11}+B_{12}+B_{13}+B_{14}}{r^8}
\\
&= \frac{B\RRR}{6\rho ^6}+\frac{B_{11}+B_{12}+B_{13}+B_{14}}{7\rho^7}.  
\end{split}
\]
Finally, using the expression~\eqref{ThetaImprovement} for $\Theta $ we obtain
\[
\begin{split}
 \Theta  &=
 \kk_0\left(1-\frac{\beta_1}{3 \rho^3}+\frac{\beta_2}{4\rho^4}-
\frac{\beta_3  }{5 \rho^5}
-\frac{\beta_4 }{6\rho^6}
\right)
+\mathcal{ET},\\
\mathcal{ET}&= \mathcal{ET}(\rho)= \mathcal{ES} +\mathcal{E}_{\Theta}.
 \end{split}
\]
As we know the constants $\beta_i$, one can use this formula to improve the computation of $\Theta$.

Indeed, using the above approach one obtains
\begin{equation}
\Theta\in [1.047906, 1.049289]+[-0.00070294, 0.00070294]i. \label{eq:S-improved}
\end{equation} 
We can see that the accuracy of the computation is roughly $10^{-3}$, which is an improvement when compared to the accuracy $2\cdot 10^{-1}$ from (\ref{eq:S-initial}). (For (\ref{eq:S-improved}) we have used the same $\rho_*$ and the computed value of $\Delta\psi(-i\rho_*)$ as for (\ref{eq:S-initial}).)

%An alternative is to use extrapolation. For instance, consider $\gamma>0$ and write the formula for $\rho $ and $\gamma \rho$:
%\[\begin{split} \Theta  &= %\kk_0\left(1-\frac{\beta_1}{3\rho^3}+\frac{\beta_2}{4\rho^4}-%\%frac{\beta_3  }{5 \rho^5}
%-\frac{\beta_4 }{6\rho^6}
%\right)+\mathcal{ET}(\rho),\\ \Theta  &=
% \kk_0\left(1-\frac{\beta_1}{3 (\gamma %\rho)^3}+\frac{\beta_2}{4(\gamma \rho)^4}-
%\frac{\beta_3  }{5 (\gamma \rho)^5}
%-\frac{\beta_4 }{6(\gamma \rho)^6}\right)+\mathcal{ET}(\gamma %\rho) \end{split}\]
%multiplying the second equation by $\gamma^3$ andsubstracting we get:\[\begin{split}
%(1-\gamma^3) \Theta  &=\kk_0 \left(
%(1-\gamma^3) +\frac{\beta_2}{4\rho^4}(1-\frac{1}{\gamma})-
%\frac{\beta_3  }{5 \rho^5}(1-\frac{1}{\gamma^2})
%-\frac{\beta_4 }{6\rho^6}(1-\frac{1}{\gamma^3})
%\right)\\
%&+\mathcal{ET}(\rho)-\gamma^3 \mathcal{ET}(\gamma \rho)
% \end{split}\]
%Which gives a new formula for $\Theta$:
%\[\begin{split}
%\Theta  &=\frac{\kk_0}{(1-\gamma^3) } \left(
%(1-\gamma^3) +\frac{\beta_2}{4\rho^4}(1-\frac{1}{\gamma})-
%\frac{\beta_3  }{5 \rho^5}(1-\frac{1}{\gamma^2})
%-\frac{\beta_4 }{6\rho^6}(1-\frac{1}{\gamma^3})
%\right)\\
%&+\frac{\mathcal{ET}(\rho)-\gamma^3 \mathcal{ET}(\gamma %\rho)}{(1-\gamma^3) }
% \end{split}\]
%In this way we can eliminate all the explicit terms from %$\Theta$ without computing them.

\subsection{Proof of Lemma~\ref{lemma:K11Computation} }\label{sec:Kcomputation}
Lets call $\KK_{ij}(s)$ the $4$ elements of the matrix $\KK$. To obtain expansions for these coefficients we compute first an expansion for $\psi^\pm$ associated to Example 1 in \eqref{def:Parameters}.

\begin{lemma}\label{lemma:psiexpansion}
The functions $\psi^\pm$  can be written as 
\[
\psi^\pm=\psi_*+\EE^\pm
\]
where $\psi_*=(\phi_*,\varphi_*)$ 
with 
\[
\begin{split}
\phi_* &= -\frac{i}{s^3} -\frac{4}{ s^4}+\frac{20i}{s^5}+\frac{120}{s^6} \\
\varphi_* &= \frac{i}{ s^3} -\frac{4}{s^4}-\frac{20i}{s^5} +\frac{120}{s^6},
\end{split}
\]
which satisfy
\begin{equation}\label{def:psisgtarestimate}
|\phi_*(s)|,|\varphi_*(s)|\leq \frac{M_*(\rr)}{|s|^3} 
\end{equation}
for
\begin{equation}\label{def:Mstar}
M_*(\rr)=1+\frac{4}{\rr}+\frac{20}{\rr^2}+\frac{120}{\rr^3}
\end{equation}
and the remainders $\EE^\pm=(\EE^\pm_\phi,\EE^\pm_\varphi)$ satisfy
\begin{equation}\label{def:Ephiestimate}
|\EE^\pm_\phi|,|\EE^\pm_\varphi|\le
\frac{B}{|s|^6}.
\end{equation}
with
\[
B=\frac{5\pi}{32} \left(M^4_{11}(\rr)+ M_{12}^4(\rr)\right)M_0+
  \frac{225 \pi }{2 }.
\]
\end{lemma}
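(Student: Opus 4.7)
The proof has two independent parts: constructing $\psi_*$ as a truncated formal solution of the inner equation, and controlling the remainder $\EE^\pm = \psi^\pm - \psi_*$ by a fixed point argument in $\XX_6 \times \XX_6$. For $\psi_*$, I would substitute the ansatz $\phi_* = \sum_{k=3}^6 c_k s^{-k}$, $\varphi_* = \sum_{k=3}^6 d_k s^{-k}$ into~(\ref{eq:InnerModified1}), expand the nonlinearity $\SSS(\psi_*, s)$ as a Laurent series in $s^{-1}$ (the expansion of the denominator $(1+s^2\phi_*\varphi_*)^{-1}$ being legitimate since $\phi_*\varphi_* = O(s^{-6})$), and match the coefficients of $s^{-3}, s^{-4}, s^{-5}, s^{-6}$. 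At each order this produces one scalar linear equation for $c_k$ and one for $d_k$, forming a triangular system whose unique solution is the pair displayed in the statement. The bound $|\psi_*(s)| \leq M_*(\rho)/|s|^3$ on $\DDD^\pm_\rho$ is then immediate from the triangle inequality together with $|s| \geq \rho$.

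For the remainder, define the residual $r(s) := \psi_*'(s) - \AAA(s)\psi_*(s) - \SSS(\psi_*(s), s)$. By the choice of the $c_k, d_k$, the Laurent expansion of $r$ starts at order $s^{-7}$, so $r \in \XX_7 \times \XX_7$ with an explicit constant obtained by continuing the expansion of $\SSS(\psi_*, s)$ one order further. Writing $\psi^\pm = \psi_* + \EE^\pm$ and using that both $\psi^\pm$ and $\psi_*$ decay at $\pm\infty$, the linearity of $\B^\pm$ produces
\[
\EE^\pm = \B^\pm\!\bigl(\SSS(\psi_* + \EE^\pm, s) - \SSS(\psi_*, s)\bigr) - \B^\pm(r),
\]
which I would solve by Banach's fixed point theorem on the closed ball $\{\|\EE\|_6 \leq B\}$ of $\XX_6 \times \XX_6$. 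The inhomogeneous term $\B^\pm(r)$ is bounded in $\XX_6$ by item~1 of Lemma~\ref{cor:linearB} with $\nu = 7$, whose constant $B_8 = 5\pi/32$ produces the $225\pi/2$ summand of $B$ after multiplying by the $\XX_7$-bound on $r$. For the contraction part, Corollary~\ref{coro:Sbounds} applied to arguments in the $\XX_3$-ball of radius $M_0(\rho)$ (which contains $\psi_* + \EE$ once $\rho \geq \rho_\diamond$, using $M_*(\rho) + B/\rho^3 \leq M_0(\rho)$) together with item~1 of Lemma~\ref{cor:linearB} gives a Lipschitz factor proportional to $(M_{11}^4 + M_{12}^4)/\rho^3$; multiplied by the radius $M_0$ this yields the first summand of $B$ and ensures a strict contraction for $\rho$ sufficiently large.

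The principal obstacle is the algebraic bookkeeping in the Laurent expansion of $\SSS(\psi_*, s)$ up to order $s^{-7}$: because the denominator $1 + s^2 \phi_* \varphi_*$ must be expanded geometrically while the numerator of $\SSS_1$ mixes $\phi_*$ and $\varphi_*$ nontrivially, the calculation must be done carefully in order to extract both the polynomial identities determining $c_3, \ldots, c_6, d_3, \ldots, d_6$ and the sharp constant appearing in the bound for $\|r\|_7$. Once this expansion is in place, the remainder of the argument is a direct application of the tools already developed in Sections~\ref{sec:step0}--\ref{sec:step5}.
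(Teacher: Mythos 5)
Your proposal takes a genuinely different route from the paper, and it does not reproduce the stated constant $B$.

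The paper does not match formal Laurent coefficients of the ODE. Instead it observes that $\psi_*$ is precisely the $O(s^{-6})$ truncation of the first Picard iterate $\FF^{-}(0)=\B^{-}(\SSS(0,\cdot))$: repeated integration by parts of $s\int_{-\infty}^{0}e^{it}(s+t)^{-4}\,dt$ gives the polynomial in $1/s$ plus an explicit integral remainder $E^{-}=\FF^{-}(0)-\psi_{*}$, bounded in $\XX_6$ via Lemma~\ref{lem:Im}. The estimate on $\EE^{-}=\psi^{-}-\psi_{*}$ is then a single triangle inequality
\[
\|\psi^{-}-\psi_{*}\|_{6}\le\|\FF^{-}(\psi^{-})-\FF^{-}(0)\|_{6}+\|E^{-}\|_{6},
\]
combined with the $\XX_3\to\XX_6$ Lipschitz bound of $\FF^{-}$ applied to the two points $\psi^{-}$ and $0$ (both in $B(M_0)\subset\XX_3$). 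No new contraction argument is needed: the existing fixed point $\psi^{-}$ with $\|\psi^{-}\|_{3}\le M_0$ does the work, and the two summands of $B$ appear directly. You instead set up a fresh Banach iteration for $\EE^{\pm}$ in a ball of $\XX_6\times\XX_6$; this is a legitimate strategy in principle, but it yields a bound of the form $\|\EE^{\pm}\|_{6}\le(1-L)^{-1}\|\B^{\pm}(r)\|_{6}$, whose algebraic shape does not simplify to the additive expression $\frac{5\pi}{32}(M^4_{11}+M^4_{12})M_0+\frac{225\pi}{2}$; the factor $(1-L)^{-1}$ is never accounted for in your tally.

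Two of your coefficient claims also fail to check out. First, the leading coefficient of the residual $r$ is $-7c_{6}$, and with $c_{6}=\pm 120$ this is $\mp 840$, so $B_{8}\|r\|_{7}$ is about $\frac{5\pi}{32}\cdot 840=\frac{525\pi}{4}$, not $\frac{225\pi}{2}$ (equivalently, $\|r\|_7\approx 840$, not the $720$ your tally implicitly requires). Second, your ``Lipschitz factor proportional to $(M_{11}^{4}+M_{12}^{4})/\rho^{3}$, multiplied by the radius $M_0$'' carries a $\rho^{-3}$ that is absent from the first summand of $B$; the paper's first summand comes from the $\XX_{3}\to\XX_{6}$ Lipschitz bound $\frac{5\pi}{32}(M_{11}^4+M_{12}^4)$ evaluated on the $\XX_{3}$-difference $\psi^{-}-0$ of norm at most $M_0$, not from an $\XX_6\to\XX_6$ contraction factor times an $\XX_3$-radius. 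Your formal-matching construction of $\psi_*$ does produce the same polynomial as the paper's integration by parts in this particular example (the genuinely nonlinear part of $\SSS(\psi_*,s)$ only enters at order $s^{-8}$), but the remainder estimate as you have set it up does not close to the stated $B$.
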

\begin{proof}
By \eqref{def:R}, the first iteration $\FF^-(0)$ analyzed in Lemma \ref{lem:firstiterationgen} for Example 1 is given by
\[
\begin{split}
\FF_1^-(0)&=s\int_{-\infty}^0\frac{1}{(s+t)^4}
 e^{i t}dt\\
\FF_2^-(0)&=s\int_{-\infty}^0\frac{1}{(s+t)^4}
 e^{-i t}dt
\end{split}
\]
Integrating by parts, we obtain
\[
\begin{split}
 \FF_1^-(0)=&\, -\frac{i}{ s^3}+\frac{4s}{i}\int_{-\infty}^0\frac{1}{(s+t)^5}
 e^{i t}dt\\
=&\, -\frac{i}{ s^3} -\frac{4}{ s^4}+\frac{20i}{ s^5} +\frac{120}{s^6}+ E_1^-(s)
=\phi_*(s)+E_1^-(s),
\\
\end{split}
 \]
 where
\[
|E_1^-(s)|=\left|720 s\int_{-\infty}^0\frac{1}{(s+t)^8}e^{i  t}dt\right| \le \frac{720}{|s|^6}\int_0^\infty \frac{1}{(t^2+1)^4}dr = \frac{720}{|s|^6} \frac{5 \pi }{32}=\frac{225 \pi }{2 |s|^6}.
\]
Analogously, for the second component
\[
\begin{split}
 \FF_2^-(0)=\, \frac{i}{s^3} -\frac{4}{ s^4}+\frac{20i}{s^5}
 +\frac{120}{s^6} +E_2^-(s)
 = \varphi_*(s)+E_2^-(s),
\end{split}
 \]
 where $E_2^-$ has the same bounds as $E_1^-$:
\[
|E_2^-(s)|\le \frac{225 \pi }{2|s|^6}.
\]
Let us call $\psi_*=(\phi_*,\varphi_*)$ and $E^-=\left(E_1^-,E_2^-\right)$. Observe that by Lemma \ref{cor:linearB} and Corollary \ref{coro:Sbounds} and recalling that, for Example 1, one has $M_{11}^j=M_{12}^j=0$ for $j=1,2,3$,
%the bounds in \eqref{eq:fitesFF} (beginning at the second inequality), putting $\psi'=0$ give:
%
\[
 \begin{split}
\left\|\FF^-(\psi)-\FF^-(0)\right\|_{6}
 &\leq\frac{5\pi}{32}\left\|\SSS(\psi,s)-\SSS(0,s)\right\|_{7}\\
 &\leq\frac{5\pi}{32} \left(M^4_{11}(\rr)+ M_{12}^4\right)\left\|\psi\right\|_{3}.
 %\\
 %&\leq\frac{5\pi}{32} \max\{M_{11}+M_{21}, M_{12}+M_{22}\}M_0
 \end{split}
\]
Now we use that $\psi ^-$ is a fixed point of operator $\FF^-$ and therefore
%, by   \eqref{def:operator7}:
\[
 \begin{split}
\left\|\psi^- -\psi_*\right\|_{6}& \le
\left\|\psi^-  -\FF^-(0)\right\|_{6}+ \left\|E^-(0)\right\|_{6}\\
  &\leq\frac{5\pi}{32} \left(M^4_{11}(\rr)+ M_{12}^4(\rr)\right)M_0+
  \frac{225 \pi }{2 }   =B.
\end{split}
\]
We conclude
\[
\begin{split}
\left|\phi ^- -\left(- \frac{i}{s^3} -\frac{4}{s^4}-\frac{20i}{s^5} +\frac{120}{s^6}\right) \right| & \le \frac{B}{|s|^6} \\
\left|\varphi^- -\left( \frac{i}{s^3} -\frac{4}{s^4}+\frac{20i}{ s^5} +\frac{120}{ s^6}\right)\right| & \le \frac{B}{|s|^6}.
\end{split}
 \]
%and, therefore,
%\[
%\begin{split}
%\left|\phi^- -\left( - \frac{i}{s^3} -\frac{4}{s^4}-\frac{20i}{s^5} %\right)\right| & \le
%\frac{\bar B}{|s|^6}\\
%\left|\varphi^- -\left(\frac{i}{s^3} -\frac{4}{s^4}+\frac{20i}{ %\frac{\bar B}{|s|^6}
%\end{split}
% \]
%where $\bar B= B+120$.
\end{proof}

Using the previous result, we can compute a better asymptotic expansion of $\KK_{11}$. We rely on the expression
\[
\KK_{11}(s)=\int_0^1 \partial_\phi\SSS_{1}(\psi^-(s)+t(\psi^+(s) -\psi^-(s)) dt.
\]
Note that, for Example 1, 
\[
\pa_\phi \SSS_1=\frac{2\tA\varphi\phi +\tB \varphi^2\phi^2+ \tC\varphi}{\left(1+\tD\varphi\phi\right)^2}
\]
where 
\[
\tA=\left(i-\frac{1}{s}\right)s^2, \quad \tB=s^4 \left(i-\frac{1}{s}\right), \quad \tC=\frac{1}{s},\quad \tD=s^2.
\]
We define the function
\[
g(r)=\pa_\phi\SSS_{1}\left(\psi_*(s)+r\left( \EE^-(s)+t\left(\EE^+(s) -\EE^-(s)\right)\right)\right)
\]
and using the fundamental theorem of calculus, $g(1)=g(0)+\int_0^1 g'(r)dr$, we have that 
\[
\begin{split}
& \pa_\phi\SSS_1(\psi^-(s)+t(\psi^+(s) -\psi^-(s)) =\pa_\phi \SSS_1(\psi_*(s))\\
&+
\left( \EE_\phi^-(s)+t(\EE_\phi^+(s) -\EE_\phi^-(s)\right)
\int_0^1\partial_{\phi\phi}  \SSS_1 \left(\psi_*(s)+r\left( \EE^-(s)+t(\EE^+(s) -\EE^-(s)\right)\right)
\ dr \\
&+
\left( \EE_\varphi^-(s)+t(\EE_\varphi^+(s) -\EE_\varphi^-(s)\right)
\int_0^1\partial_{\phi\varphi} \SSS_1 \left(\psi_*(s)+r\left( \EE^-(s)+t(\EE^+(s) -\EE^-(s)\right)\right)
\ dr.
\end{split}
\]
Therefore,
\begin{equation}\label{eq:Kijaprox}
\begin{split}
& \KK_{11}=\pa_\phi \SSS_1(\psi_*(s))\\
&+\int_0^1 \left[
\left( \EE_\phi^-(s)+t(\EE_\phi^+(s) -\EE_\phi^-(s)\right)
\int_0^1\partial_{\phi\phi} \SSS_{1}\left(\psi_*(s)+r\left( \EE^-(s)+t(\EE^+(s) -\EE^-(s)\right)\right)\, dr
\right]\, dt\\
&+
\int_0^1\left[ \left( \EE_\varphi^-(s)+t(\EE_\varphi^+(s) -\EE_\varphi^-(s)\right)
\int_0^1\partial_{\phi\varphi} \SSS_{1}\left(\psi_*(s)+r\left( \EE^-(s)+t(\EE^+(s) -\EE^-(s)\right)\right)
\, dr \right]\, dt.
\end{split}
\end{equation}
%To bound these integrals let us introduce the notation
%\[
%\begin{split}
%A&=\left(i\al-\frac{1}{s}\right)bs^2, \ B=b^2s^4 %\left(i\al-\frac{1}{s}\right), \ C=\frac{b}{s}\\
%\tilde A&=\left(i\al+\frac{1}{s}\right)bs^2, \ \tilde B=b^2s^4 \left(i\al+\frac{1}{s}\right), \ D=b s^2= C s^3
%\end{split}
%\]
%With this notation the derivatives of $R=(R_1,R_2)$ read
%\[
%%\begin{split}
%\RRR_{11}(\psi)&= D_\phi R_1=\frac{2A\varphi\phi +B \varphi^2\phi^2+ C\varphi}{\left(1+D\varphi\phi\right)^2}\\
%\RRR_{12}(\psi)&= D_\varphi R_1=\frac{A \phi^2+C\phi}{\left(1+D\varphi\phi\right)^2} \\
%\RRR_{21}(\psi)&= D_\phi R_2=\frac{\tilde A \varphi^2+ C \varphi}{\left(1+D\varphi\phi \right)^2}\\
%\RRR_{22}(\psi)&= D_\varphi R_2=\frac{2\tilde A \varphi\phi+ \tilde B \varphi^2\phi s^2+ C \phi}{\left(1+D\varphi\phi \right)^2}\\
%\end{split}
% \]
One can easily check that
\[
%\begin{split}
\partial_{\phi\phi} \SSS_{1}(\psi)= \frac{2\tA\varphi  - 2\tD\tC\varphi^2}{\left(1+\tD\varphi\phi\right)^3},\qquad 
\partial_{\phi\varphi} \SSS_{1}(\psi)= \frac{\tC+ 2\tA\phi  - \tD\tC\varphi \phi}{\left(1+\tD\varphi\phi\right)^3}.
%\RRR_{121}(\psi)&= D_\phi R_{12}=\frac{C+2 A \phi +AD\phi^2 \varphi}{\left(1+D\varphi\phi\right)^3} \\
%\RRR_{122}(\psi)&= D_\varphi R_{12}=\frac{-CD \phi^2-AD  \phi^3}{\left(1+D\varphi\phi \right)^2}\\
%\RRR_{211}(\psi)&= D_\phi R_{21}=\frac{-CD \varphi^2-AD  \varphi^3}{\left(1+D\varphi\phi \right)^2}\\
%\RRR_{212}(\psi)&= D_\varphi R_{21}=\frac{C+2 \tilde A \varphi +\tilde A D\varphi^2 \phi}{\left(1+D\varphi\phi\right)^3} \\
%\RRR_{221}(\psi)&= D_\phi R_{22}=\frac{C+ 2A\phi +2(B-AD) \varphi \phi^2 - DC\varphi \phi}{\left(1+D\varphi\phi\right)^3}\\
%\RRR_{222}(\psi)&= D_\varphi R_{22}=\frac{2\tilde A\phi +2(\tilde  B-\tilde A D) \phi^2\varphi - 2DC\phi^2}{\left(1+D\varphi\phi\right)^3}
 \]
Moreover, by \eqref{def:psiestimate} and \eqref{def:psisgtarestimate}, we know that 
 $$
 |\psi_*(s) +r (\mathcal{E}^-(s) + t (\mathcal{E}^+(s) - \mathcal{E}^-(s)))|\leq \frac{M_0}{|s|^3}.
 $$
Then, taking into account the definitions of $\tA,\tC,\tD$,
one can obtain the following bounds for $|s|\ge \rho$,
\[
\begin{split}
|\partial_{\phi\phi} \SSS_{1}(\psi)|&\le \frac{1 }{s}
\frac{
2 M_0(1+\frac{1}{\rho})+
2\frac{  M_0^2}{\rho^4}} {\left(1-\frac{M_0^2}{\rr^4}\right)^3}
%=\frac{\overline{\RRR}_{111}}{|s|}
\\
|\partial_{\phi\varphi} \SSS_{1}(\psi)| &\le  \frac{1}{s}
\frac{
1+ 2 M_0(1+\frac{1}{\rho})+
\frac{ M_0^2}{\rho^4}} {\left(1-\frac{M_0^2}{\rr^4}\right)^3}.
%=\frac{\overline{\RRR}_{112}}{|s|}
%\\
%|\RRR_{121}|, |\RRR_{212}| &\le  \frac{b}{s}
%\frac{
%1+ 2(\al+\frac{1}{\rho})M_0+ \frac{ b (\al+\frac{1}{\rho})M_0^3}{\rho^4}} {\left(1-\frac{bM_0^2}{\rr^4}\right)^3}\\
%|\RRR_{122}|, |\RRR_{211}| &\le  \frac{b^2 M_0^2}{s^5}
%\frac{
%1+ (\al+\frac{2}{\rho})M_0} {\left(1-\frac{bM_0^2}{\rr^4}\right)^3}\\
\end{split}
 \]
Using \eqref{eq:Kijaprox} and the bounds \eqref{def:Ephiestimate}, we obtain that $\KK_{11}$ satisfies
\begin{equation}\label{eq:k11error}
\KK_{11}(s)= \partial_{\phi} \SSS_{1}(\psi_*(s))+ \mathcal{EK}_{11}
\end{equation}
with
\begin{equation}\label{errorEK11}
|\mathcal{EK}_{11}|\le \frac{B}{|s|^7}\RRR\qquad \text{where}\qquad \RRR=\frac{
1+ 4 M_0(1+\frac{1}{\rho})+
3\frac{ M_0^2}{\rho^4}} {\left(1-\frac{M_0^2}{\rr^4}\right)^3}.
\end{equation}
Last step is to compute $\partial_{\phi} \SSS_{1}(\psi_*)$ using the formula of $\psi_*$ in Lemma \ref{lemma:psiexpansion}. We recall that
\[
\begin{split}
\partial_{\phi} \SSS_{1}(\psi_*)&=\frac{\varphi_*\phi _* s^2\left(i-\frac{1}{s}\right)\left(2+\varphi_*\phi_* s^2\right)+\frac{1}{s}\varphi_*}{\left(1+\varphi_*\phi_* s^2\right)^2}
\end{split}
\]
and we write
\[
\begin{split}
\partial_{\phi} \SSS_{1}(\psi_*)&=
\frac{2\varphi_*\phi _* s^2\left(i-\frac{1}{s}\right)+\frac{1}{s}\varphi_*}{\left(1+\varphi_*\phi_* s^2\right)^2}+
\mathcal{ER}_1
\end{split}
\]
with
\begin{equation}\label{errorER1}
| \mathcal{ER}_1 |=
\left |\frac{\varphi_*\phi_* s^2\left(i-\frac{1}{s}\right)\varphi_*\phi_* s^2}{\left(1+\varphi_*\phi_* s^2\right)^2} \right |
\le  \frac{M_*^4 (1+\frac{1}{\rho})}{\left(1-\frac{M_*^2}{\rr^4}\right)^2}\frac{1}{|s|^8}
=\frac{B_{11}}{|s|^8}.
\end{equation}
Now, using that
\[
\frac{1}{(1+x)^2}=1-2x+\frac{x^2(3+2x)}{1+x^2},
\]
we write
\[
\begin{split}
\partial_{\phi} \SSS_{1}(\psi_*)&=
\left(2\varphi_*\phi_* s^2\left(i-\frac{1}{s}\right)+\frac{1}{s}\varphi_*\right)
\left(1-2\varphi_*\phi_* s^2\right)+
\mathcal{ER}_1 +\mathcal{ER}_2\\
&=
2\varphi_*\phi_* s^2\left(i-\frac{1}{s}\right)+\frac{1}{s}\varphi_*
+
\mathcal{ER}_1 +\mathcal{ER}_2+\mathcal{ER}_3
\end{split}
\]
with
\begin{equation}\label{errorER23}
\begin{split}
| \mathcal{ER}_2 |&=
\left |2\varphi_*\phi _* s^2\left(i-\frac{1}{s}\right)+\frac{1}{s}\varphi_*\right|
\left|\frac{\varphi_*^2\phi _*^2 s^4\left(3+ 2\varphi_*\phi_* s^2\right)}{\left(1+\varphi_*\phi_* s^2\right)^2}\right|\\
& \le
\frac{M_*^5}{|s|^{12}}\frac{ (2M_*(1+\frac{1}{\rho})+1)(3+\frac{2M_*^2}{\rho^4})}
{\left(1-\frac{M_*^2}{\rr^4}\right)^2}
\\
& \le
\frac{M_*^5}{|s|^{8}}\frac{ (2M_*(1+\frac{1}{\rho})+1)(3+\frac{2M_*^2}{\rho^4})}
{\rho^4\left(1-\frac{M_*^2}{\rr^4}\right)^2}
=\frac{B_{12}}{|s|^8}
\\
|\mathcal{ER}_3|&=\left| \left(2\varphi_*\phi_* s^2\left(i-\frac{1}{s}\right)+\frac{1}{s}\varphi_*\right) \left(-2\varphi_*\phi_* s^2\right)\right|\\
&\le
\frac{2  M_*^3}{|s|^{8}} \left (2M_* \left  (1+\frac{1}{\rho}\right )+1 \right )
=\frac{B_{13}}{|s|^8}.
\end{split}
\end{equation}
We  now substitute the expressions $\psi_*$ in Lemma \ref{lemma:psiexpansion} which give
\[
 \begin{split}
 \phi_*\varphi_*=&\frac{1}{ s^6}-\frac{24}{s^8}+\frac{400}{s^{10}}\\ 
 2\varphi_*\phi_* s^2\left(i-\frac{1}{s}\right)+\frac{1}{s}\varphi_*=& \frac{3i}{ s^4}-\frac{6}{s^5}-\frac{68i}{s^6}+\frac{48}{ s^7}+\frac{800i}{s^8}-\frac{800}{ s^9}
\end{split}
% }
\]
%\[
%\begin{split}
%\varphi_0 &= -\frac{4}{\al^2 s^4}+i \frac{\al^2-20}{\al ^3 s^3}\\
%\varphi_0\phi_0 = &|\varphi_0|^2 = \frac{16}{\al^4 s^8}+ \frac{(\al^2-20)^2}{\al ^6 s^6}\\
%2b\varphi_0\phi _0 s^2\left(i\al-\frac{1}{s}\right)+\frac{1}{s}b\varphi_0
%=&
%\frac{i}{s^4}\frac{b}{\al^5}(\al^2-20)\left(3 \al^2-20\right)-\frac{4b}{\al ^2 s^5}+
%\frac{i\, 32 b  }{\al ^3 s^6}\\
%&-
%\frac{2b (\al^2-20)^2}{\al^6 s^7}-
%\frac{32 b}{\al^4 s^9}
%\end{split}
%\]
which gives
\[
%\begin{split}
\pa_\phi\SSS_{1}(\psi_*)=\frac{3i}{s^4}-\frac{6}{ s^5}-\frac{68i}{ s^6}+\frac{48}{ s^7}\\
+\mathcal{ER}_1 +\mathcal{ER}_2+\mathcal{ER}_3+\mathcal{ER}_4
%\end{split}
\]
and
\begin{equation}\label{errorER4}
|\mathcal{ER}_4|\leq\frac{800}{|s|^8} \left(1-\frac{1}{\rho}\right)  = \frac{B_{14}}{|s|^8}.
\end{equation}
%\[
%\begin{split}
%\RRR_{11}(\psi_0)=&
%\frac{i}{s^4}\frac{b}{\al^5}(\al^2-20)\left(3 \al^2-20\right)-\frac{4b}{\al ^2 s^5}+
%\frac{i\, 32 b  }{\al ^3 s^6}-
%\frac{2b (\al^2-20)^2}{\al^6 s^7}\\
%&+
%\mathcal{ER}_1 +\mathcal{ER}_2+\mathcal{ER}_3+\mathcal{ER}_4
%%\end{split}
%\%]
%and
%\begin{equation}\label{errorER4}
%|\mathcal{ER}_4|=\frac{32 |b|}{\al^4 |s|^9} \le \frac{32 |b|}{\al^4 \rho}
%\frac{1}{ |s|^8}  = \frac{B_{14}}{|s|^8}.
%\end{equation}
Using these approximations in \eqref{eq:k11error}, we obtain the statement of the lemma
%conclude that
%\[\KK_{11}(s)=
%\frac{i}{s^4}\frac{b}{\al^5}(\al^2-20)\left(3 \al^2-20\right)-\frac{4b}{\al ^2 s^5}+
%\frac{i\, 32 b  }{\al ^3 s^6}-\frac{2b (\al^2-20)^2}{\al^6 s^7}+ \mathcal{EKT}_{11}\]
%\begin{equation}\label{formulaK11}
%\begin{split}
%\KK_{11}(s) &=
%%\frac{i\beta_1 }{s^4} +\frac{\beta_2}{ s^5}+
%\frac{i\, \beta_3  }{ s^6}+\frac{\beta_4}{ s^7}
%+ \mathcal{EKT}_{11}\\
%\beta_1&=\frac{b}{\al^5}(\al^2-20)\left(3 \al^2-20\right), \ \beta_2= -\frac{4b}{\al ^2 }, \\
%\beta _3&=\frac{32 b  }{\al ^3}, \beta_4=-
%\frac{2b (\al^2-20)^2}{\al^6}
%\end{split}
%\end{equation}
taking
\[
\mathcal{EKT}_{11}= \mathcal{EK}_{11}
+
\mathcal{ER}_1 +\mathcal{ER}_2+\mathcal{ER}_3+\mathcal{ER}_4.
\]
Using the bounds~\eqref{errorEK11}, \eqref{errorER1}, \eqref{errorER23}, \eqref{errorER4} we get
\begin{equation}\label{errorK11total:1}
|\mathcal{EKT}_{11}| \le \frac{ B\RRR}{|s|^7}+\frac{B_{11}+B_{12}+B_{13}+B_{14}}{|s|^8}
\end{equation}.

\section*{Acknowledgements}
This project has received funding from the European Research Council (ERC) under the European Union’s
Horizon 2020 research and innovation programme (grant agreement No 757802). 
This work is part of the grant PGC2018-098676-B-100 funded by MCIN/AEI/10.13039/501100011033 and “ERDF A way of making Europe”.
%I.B and T. M. S. has also been partly supported by the Spanish MINECO-FEDER Grant PGC2018-098676-B-100 (AEI/FEDER/UE) %
I.B and T. M. S. has also been partly supported by the and the Catalan grant 2017SGR1049. M. G.
and T. M. S. are supported by the Catalan Institution for Research and Advanced Studies via an ICREA
Academia Prize 2019.  M.C. has been partially supported by the Polish National Science Center (NCN) grants 2019/35/B/ST1/00655 and 2021/41/B/ST1/00407.
This work is also supported by the Spanish State Research Agency, through the Severo Ochoa and María de Maeztu Program for Centers and Units of Excellence in R\&D (CEX2020-001084-M).

%\section*{References}
\appendix
\section{Proof of Lemma \ref{cor:linearB}}\label{app:ProofOperator}
To prove Lemma \ref{cor:linearB}, we need  first the following lemma.
\begin{lemma} \label{lem:Im}
If $m$ is even
$$
\int_{-\infty}^0 \frac{1}{(t^2 + 1)^{m/2}} \, dt = \frac{\pi}{2} \frac{(m-3))!!}{(m-2)!!} =:B_{m}
$$
and, for $m$ odd
$$
\int_{-\infty}^0 \frac{1}{(t^2 + 1)^{m/2}} \, dt = \frac{(m-3)!!}{(m-2)!!} =:B_m
$$
\end{lemma}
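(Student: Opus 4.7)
The statement is the classical evaluation of $\int_{-\infty}^0 (t^2+1)^{-m/2}\,dt$, so the proof is essentially a computation. I would offer two natural routes; either one fits in a few lines.

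My preferred plan is the trigonometric substitution. Put $t=-\tan\theta$ so that $dt=-\sec^2\theta\,d\theta$ and $1+t^2=\sec^2\theta$, with $t=-\infty$ corresponding to $\theta=\pi/2$ and $t=0$ to $\theta=0$. The integral becomes
\[
\int_{-\infty}^0\frac{dt}{(t^2+1)^{m/2}}=\int_0^{\pi/2}\frac{\sec^2\theta}{\sec^m\theta}\,d\theta=\int_0^{\pi/2}\cos^{m-2}\theta\,d\theta,
\]
and the announced formula is exactly the Wallis formula for $\int_0^{\pi/2}\cos^n\theta\,d\theta$ with $n=m-2$ (producing the factor $\pi/2$ when $n$ is even, i.e. when $m$ is even, and no such factor when $n$ is odd).

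As a self-contained alternative, I would derive the reduction formula by integration by parts. Differentiating $u(t)=t(1+t^2)^{1-m/2}$ and rearranging gives the identity
\[
\frac{m-2}{(1+t^2)^{m/2}}=\frac{du}{dt}+\frac{m-3}{(1+t^2)^{(m-2)/2}},
\]
and integrating from $-\infty$ to $0$ (where the boundary term $u(0)-u(-\infty)$ vanishes for $m\ge 3$) yields $B_m=\tfrac{m-3}{m-2}B_{m-2}$. Together with the base cases $B_2=\bigl[\arctan t\bigr]_{-\infty}^0=\pi/2$ (matching $(m-3)!!/(m-2)!!=1$ with the convention $(-1)!!=0!!=1$) and $B_3=\int_0^{\pi/2}\cos\theta\,d\theta=1$, an easy induction on parity gives the stated closed forms.

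There is no real obstacle here; the only point requiring a sentence of care is the double-factorial conventions at the boundary ($(-1)!!=0!!=1$) so that the formulas make sense for $m=2,3$. The substitution route is the cleanest and is what I would write out.
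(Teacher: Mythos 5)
Your proposal is correct. Your preferred route (the substitution $t=-\tan\theta$, reducing the integral to $\int_0^{\pi/2}\cos^{m-2}\theta\,d\theta$ and citing the Wallis formula) is a valid and arguably cleaner packaging, but it outsources the recursion to Wallis rather than deriving it; it is a genuinely different presentation from the paper's. Your ``self-contained alternative'' is essentially identical to what the paper does: the paper also integrates by parts to obtain the two-step recursion $I_k=\frac{k-3}{k-2}I_{k-2}$ (it writes it as $kI_{k+2}=(k-1)I_k$ after splitting $t^2=(t^2+1)-1$), and then closes with the same base cases $I_2=\pi/2$ and $I_3=1$. The only practical difference between your IBP identity and the paper's is cosmetic bookkeeping — you differentiate $t(1+t^2)^{1-m/2}$ and rearrange, the paper differentiates $t(1+t^2)^{-k/2}$ and rewrites the resulting $t^2$ numerator — but both yield the same recurrence. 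Your remark about the double-factorial conventions $(-1)!!=0!!=1$ is a useful sanity check that the paper leaves implicit.
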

\begin{proof}
Integrating by parts,
\[
\begin{split}
I_{k}:&= \int_{-\infty}^0 \frac{1}{(t^2 + 1)^{k/2}} \, dt = k\int_{-\infty}^0 \frac{t^2}{(t^2 + 1)^{\frac{k}{2}+1}} \\
&= kI_k - kI_{k+2}.
\end{split}
\]
Therefore $I_{k+2}= \frac{k-1}{k} I_{k}$ which implies $I_{k}=\frac{k-3}{k-2}I_{k-2}$ and therefore
$$
I_{k}=\frac{(k-3)!!}{(k-2)!!} J,
$$
with $J=I_3$ if $k$ is odd and $J=I_2$ if $k$ is even.
Since
$
I_3=1$ and $ I_2=\frac{\pi}{2}
$
we are done.
\end{proof}

We use this lemma to prove Lemma \ref{cor:linearB}.
\begin{proof}[Proof of Lemma \ref{cor:linearB}]
Let $\psi =(\phi,\varphi)\in \XX_{\nu}$
The first component of $\B^-(\psi)$ is
\begin{equation}\label{B1}
\B^-_1(\psi) = s \int_{-\infty}^0 \frac{e^{i\al t}}{s+t}\phi(s+t)\, dt.
\end{equation}
We first prove the first item. Indeed, using that 
$|s+t|^2 \geq |s|^2 + t^2$ for  $s\in \DDD^-_\rr$, one can prove that 
$$
\left | \B^-_1(\psi)\right|\leq |s| \|\phi\|_\nu \int_{-\infty}^0 \frac{1}{|s+t|^{\nu+1}}\, dt \leq \frac{\|\phi\|_{\nu}}{|s|^{\nu-1}}
\int_{-\infty}^0 \frac{1}{\big (t^2 + 1\big )^{\frac{\nu+1}{2}}} = B_{\nu+1} \frac{\|\phi\|_{\nu}}{|s|^{\nu-1}}.
$$
Analogously we deal with $\B^-_2(\psi)$ and we obtain the result in the first item, taking into account that the product norm is the supremum norm.

Now we deal with the second item. By the geometry of $\DDD^-_\rr$, and using the Cauchy's theorem, we can change the path of integration in the integral ~\eqref{B1} defining $\B_1^-(\psi)$ to $t e^{i\gamma}$, $t\in(-\infty,0]$, with $0\leq \gamma \leq \beta$. We obtain then
$$
\B^-_1(\psi)(s) = s \int_{-\infty}^0 \frac{e^{i\al te^{i\gamma}}}{s+te^{i\gamma}} \phi(s+te^{i\gamma})e^{i\gamma} \, dt.
$$
Notice that $s+te^{i\gamma} \in \DDD^-_\rr$ and
$$
|s+te^i\gamma| \geq |s|\sin\left (\frac{\pi}{2}-\gamma\right ) = |s|\cos \gamma.
$$
Therefore
$$
\left | \B^-_1(\psi)\right | \leq \frac{\|\phi\|_\nu }{|s|^\nu \big (\cos \gamma \big )^{\nu+1}}\int_{-\infty}^0
e^{\al \sin \gamma t} \, dt =  \frac{\|\phi\|_\nu}{|s|^\nu \al \big (\cos \gamma\big )^{\nu+1} \sin \gamma}.
$$
The function $\big (\cos \gamma \big )^{\nu+1} \sin \gamma$ has only a maximum in $\left (0,\frac{\pi}{2}\right )$ in $\gamma_*$ such that $(\nu+1)\sin^2 \gamma_* =1$.

As in the first item, $\B_2^-(\psi)$ can be treated in the same way, changing here the integration path to
$te^{-i\gamma}$.
\end{proof}

\section{Computing the bound on $\Delta\psi\left(  -i\rho^{\ast
}\right)  $\label{app:Delta-comp}}

Here we provide an explicit rigorous estimate, using interval
arithmetic bounds, for the distance between $\psi^{+}\left(  -i\rho\right)  $
and $\psi^{-}\left(  -i\rho\right)  $ for a given $\rho>0$ which we have used for our Examples 1 and 2 (discussed in Sections  \ref{sec:example1}, \ref{sec:example2} and \ref{sec:StokesComputeExtrap}).

We start with Example 1. We work with the system (\ref{eq:s-system})
with $F_{1}=-s^{-3}$, $F_{2}=0,$ $H=0$.
%, prior to the change to complex time.

By writing
\[
\phi=x_{1}+iy_{1},\qquad\varphi=x_{2}+iy_{2},\qquad s=s_{1}+is_{2},
\]
we can rewrite (\ref{eq:s-system}) as%
\begin{align}
x_{1}^{\prime} &  =\left(  1+\frac{s_{2}}{s_{1}^{2}+s_{2}^{2}}\right)
y_{1}+\frac{s_{1}x_{1}}{s_{1}^{2}+s_{2}^{2}}-\frac{s_{1}^{3}-3s_{1}s_{2}^{2}%
}{\left(  s_{1}^{2}+s_{2}^{2}\right)  ^{3}},\nonumber\\
y_{1}^{\prime} &  =-\left(  1+\frac{s_{2}}{s_{1}^{2}+s_{2}^{2}}\right)
x_{1}+\frac{s_{1}y_{1}}{s_{1}^{2}+s_{2}^{2}}-\frac{s_{2}^{3}-3s_{1}^{2}s_{2}%
}{\left(  s_{1}^{2}+s_{2}^{2}\right)  ^{3}},\nonumber\\
x_{2}^{\prime} &  =-\left(  1-\frac{s_{2}}{s_{1}^{2}+s_{2}^{2}}\right)
y_{2}+\frac{s_{1}x_{2}}{s_{1}^{2}+s_{2}^{2}}-\frac{s_{1}^{3}-3s_{1}s_{2}^{2}%
}{\left(  s_{1}^{2}+s_{2}^{2}\right)  ^{3}},\label{eq:ODE-ex1-real-time}\\
y_{2}^{\prime} &  =\left(  1-\frac{s_{2}}{s_{1}^{2}+s_{2}^{2}}\right)
x_{2}+\frac{s_{1}y_{2}}{s_{1}^{2}+s_{2}^{2}}-\frac{s_{2}^{3}-3s_{1}^{2}s_{2}%
}{\left(  s_{1}^{2}+s_{2}^{2}\right)  ^{3}},\nonumber\\
s_{1}^{\prime} &  =-2s_{1}s_{2}\left(  x_{1}y_{2}+x_{2}y_{1}\right)  +\left(
x_{1}x_{2}-y_{1}y_{2}\right)  \left(  s_{1}^{2}-s_{2}^{2}\right)
+1,\nonumber\\
s_{2}^{\prime} &  =2s_{1}s_{2}\left(  x_{1}x_{2}-y_{1}y_{2}\right)  +\left(
x_{1}y_{2}+x_{2}y_{1}\right)  \left(  s_{1}^{2}-s_{2}^{2}\right)  .\nonumber
\end{align}
This is an ODE in $\mathbb{R}^{6}$, with real time $\tau$. We shall write
$\Phi_{\tau}$ for the flow induced by (\ref{eq:ODE-ex1-real-time}).

Note that $\left(  s_{1},s_{2}\right)  =\left(  0,-\rho\right)$ corresponds
to the complex $s=-i\rho$.

For $x\in\mathbb{R}^{6}$ we shall write%
\begin{align*}
\tau^{+}\left(  x\right)   &  =\sup\left\{  \tau<0:\pi_{s_{1}}\Phi_{\tau}%
(x)=0\right\}  ,\\
\tau^{-}\left(  x\right)   &  =\inf\left\{  \tau>0:\pi_{s_{1}}\Phi_{\tau}%
(x)=0\right\}  ,
\end{align*}
and define
\begin{align*}
\mathcal{P}^{+},\mathcal{P}^{-} &  :\mathbb{R}^{6}\rightarrow\left\{
s_{1}=0\right\}  ,\\
\mathcal{P}^{+}\left(  x\right)   &  :=\Phi_{\tau^{+}\left(  x\right)  }(x),\\
\mathcal{P}^{-}\left(  x\right)   &  :=\Phi_{\tau^{-}\left(  x\right)  }(x).
\end{align*}
We do not assume that $\mathcal{P}^{+}$ and $\mathcal{P}^{-}$ are globally
defined. Whenever we write $\mathcal{P}^{+}\left(  x\right)  $ or
$\mathcal{P}^{-}\left(  x\right)  $ we will always validate that the
considered point $x$ lies in the domain of the map.

We know that the two solutions $\psi^{\pm}$ of \eqref{eq:Inner} (with  $F_{1}=-s^{-3}$, $F_{2}=0,$ $H=0$) satisfy%
\begin{align}
\left\Vert \psi^{-}(s)\right\Vert  & \leq\left\vert s\right\vert ^{-3}%
M_{0}\leq\left\vert
%TCIMACRO{\TeXButton{Re}{\Re}}%
%BeginExpansion
\Re
%EndExpansion
s\right\vert ^{-3}M_{0}\qquad\text{for }%
%TCIMACRO{\TeXButton{Re}{\Re}}%
%BeginExpansion
\Re
%EndExpansion
s<0,\label{eq:psi-minus-M0-bound}\\
\left\Vert \psi^{+}(s)\right\Vert  & \leq\left\vert s\right\vert ^{-3}%
M_{0}\leq\left\vert
%TCIMACRO{\TeXButton{Re}{\Re}}%
%BeginExpansion
\Re
%EndExpansion
s\right\vert ^{-3}M_{0}\qquad\text{for }%
%TCIMACRO{\TeXButton{Re}{\Re}}%
%BeginExpansion
\Re
%EndExpansion
s>0.\label{eq:psi-plus-M0-bound}%
\end{align}

\begin{figure}
    \centering
    \includegraphics[height=3.5cm]{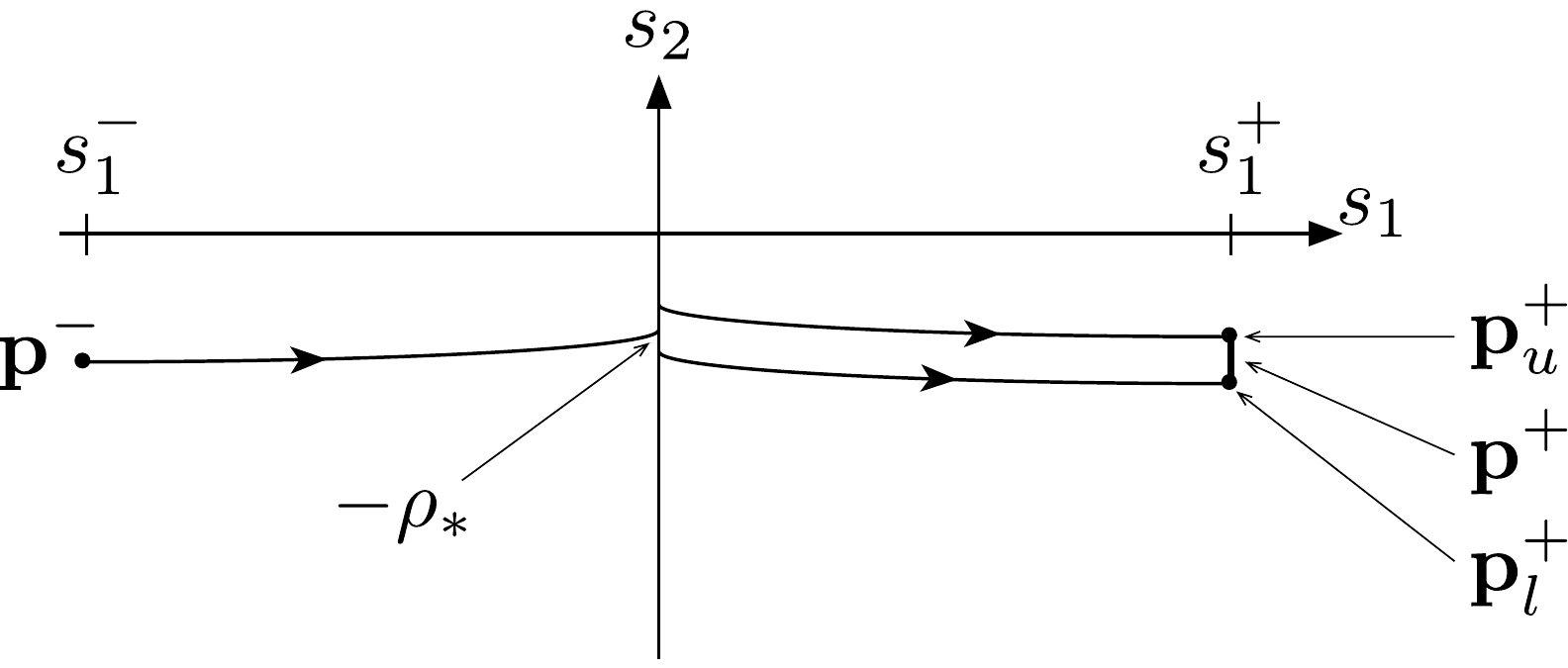}
    \caption{Illustration for Lemma \ref{lem:interval-tool}. Here we depict the projection of the flow onto the $s=(s_1,s_2)$ coordinates. The sets $\mathcal{P}^+(\mathbf{p}^+)$ and $\mathcal{P}^-(\mathbf{p}^-)$ which lead to the bound (\ref{eq:delta-rho-bound}) for $\Delta \psi$ lie on the section $\{s_1=0\}$. }
    \label{fig:Delta-psi}
\end{figure}

\begin{lemma}
\label{lem:interval-tool}Consider $s^{-},s_{l}^{+},s_{u}^{+}\in\mathbb{R}^{2}$
of the form $s^{-}=(  s_{1}^{-},s_{2}^{-})  ,$ $s_{l}^{+}=(
s_{1}^{+},s_{2,l}^{+})  ,$ $s_{u}^{+}=(  s_{1}^{+},s_{2,u}%
^{+})  $, where $s_{1}^{-}<0<s_{1}^{+}$ and $s_{2,l}^{+}<s_{2,u}^{+}$.
(The subscripts $l$ and $u$ stand for `lower' and `upper'; see Figure \ref{fig:Delta-psi}.)
Let $\mathbf{s}^{+}\subset\mathbb{R}^{2}$ be the vertical interval joining $s_{l}^{+}$
and $s_{u}^{+}$ and let%
\begin{align*}
\mathbf{p}^{-}  & :=\left\{  p\in\mathbb{R}^{6}:\pi_{s}p=s^{-},\left\Vert
\pi_{x,y}p\right\Vert \leq\left\vert s_{1}^{-}\right\vert ^{-3}M_{0}\right\}
,\\
\mathbf{p}^{+}  & :=\left\{  p\in\mathbb{R}^{6}:\pi_{s}p\in\mathbf{s}%
^{+},\left\Vert \pi_{x,y}p\right\Vert \leq\left\vert s_{1}^{+}\right\vert
^{-3}M_{0}\right\}  ,\\
\mathbf{p}_{u}^{+}  & :=\left\{  p\in\mathbb{R}^{6}:\pi_{s}p=s_{u}%
^{+},\left\Vert \pi_{x,y}p\right\Vert \leq\left\vert s_{1}^{+}\right\vert
^{-3}M_{0}\right\}  ,\\
\mathbf{p}_{l}^{+}  & :=\left\{  p\in\mathbb{R}^{6}:\pi_{s}p=s_{l}%
^{+},\left\Vert \pi_{x,y}p\right\Vert \leq\left\vert s_{1}^{+}\right\vert
^{-3}M_{0}\right\}  .
\end{align*}
If
\begin{equation}
\pi_{s_{2}}\mathcal{P}^{+}\left(  \mathbf{p}_{l}^{+}\right)  <\pi_{s_{2}%
}\mathcal{P}^{-}\left(  \mathbf{p}^{-}\right)  <\pi_{s_{2}}\mathcal{P}%
^{+}\left(  \mathbf{p}_{u}^{+}\right)  <0\label{eq:Bolzano-ineq}%
\end{equation}
then there exists a $\rho^{\ast}\in-\pi_{s_{2}}\mathcal{P}^{-}\left(
\mathbf{p}^{-}\right)  $ such that%
\begin{equation}
\Delta\psi\left(  -i\rho^{\ast}\right)  =\psi^{+}\left(  -i\rho^{\ast}\right)
-\psi^{-}\left(  -i\rho^{\ast}\right)  \in\pi_{x,y}\left(  \mathcal{P}%
^{+}\left(  \mathbf{p}^{+}\right)  -\mathcal{P}^{-}\left(  \mathbf{p}%
^{-}\right)  \right)  .\label{eq:delta-rho-bound}%
\end{equation}

\end{lemma}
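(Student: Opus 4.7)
The plan is to apply an intermediate value theorem to a one-parameter family of Poincaré map images, exploiting the fact that the inner equation (\ref{eq:Inner}) and the $6$-dimensional real ODE (\ref{eq:ODE-ex1-real-time}) trace out the same curves in $\mathbb{R}^6$: the former is parameterized by the complex variable $s$, the latter by real time $\tau$. Any solution $\psi(s)$ of (\ref{eq:Inner}) on a curve in the $s$-plane therefore lifts to an orbit of $\Phi_\tau$ via $s \mapsto (\phi(s),\varphi(s),s) \in \mathbb{R}^6$ (under the obvious identification of complex coordinates with $\mathbb{R}^2$). Throughout, $\psi^\pm$ denote the true solutions, whose existence and tube bounds (\ref{eq:psi-minus-M0-bound})--(\ref{eq:psi-plus-M0-bound}) are furnished by Theorem~\ref{th:inner} and the validated $\rho_\ast$ of Section~\ref{sec:example1}.

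First, I would locate the true initial data inside the sets $\mathbf{p}^\pm$. The lifted point $q^- := (\psi^-(s^-), s^-)$ satisfies $\pi_s q^- = s^-$ and $\|\pi_{x,y} q^-\| \leq |s_1^-|^{-3} M_0$ by (\ref{eq:psi-minus-M0-bound}), so $q^- \in \mathbf{p}^-$. Analogously, for every $s_2^+ \in [s_{2,l}^+, s_{2,u}^+]$ the lift $q^+(s_2^+) := (\psi^+(s_1^+ + i s_2^+), s_1^+, s_2^+)$ lies in $\mathbf{p}^+$ by (\ref{eq:psi-plus-M0-bound}), with $q^+(s_{2,l}^+) \in \mathbf{p}_l^+$ and $q^+(s_{2,u}^+) \in \mathbf{p}_u^+$ at the endpoints.

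Next, flowing $q^-$ forward in $\tau$ (and each $q^+(s_2^+)$ backward) until the orbit hits the section $\{s_1 = 0\}$, I would introduce
\[
g(s_2^+) := \pi_{s_2}\mathcal{P}^+(q^+(s_2^+)) - \pi_{s_2}\mathcal{P}^-(q^-), \qquad s_2^+ \in [s_{2,l}^+, s_{2,u}^+].
\]
Continuity of $g$ follows from smooth dependence of the flow on initial data together with transversality of $\Phi_\tau$ to $\{s_1 = 0\}$ (along the relevant orbits $s_1'$ is close to $1$ by the $M_0$-tube bounds). Since $q^+(s_{2,l}^+) \in \mathbf{p}_l^+$ and $q^+(s_{2,u}^+) \in \mathbf{p}_u^+$, the hypothesis (\ref{eq:Bolzano-ineq}) forces $g(s_{2,l}^+) < 0 < g(s_{2,u}^+)$, so by the intermediate value theorem there exists $s_2^{+\ast} \in [s_{2,l}^+, s_{2,u}^+]$ with $g(s_2^{+\ast}) = 0$. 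Setting $\rho^\ast := -\pi_{s_2}\mathcal{P}^-(q^-)$, the last inequality of (\ref{eq:Bolzano-ineq}) ensures $\rho^\ast > 0$, and $\rho^\ast \in -\pi_{s_2}\mathcal{P}^-(\mathbf{p}^-)$.

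By construction both $\mathcal{P}^+(q^+(s_2^{+\ast}))$ and $\mathcal{P}^-(q^-)$ project onto the same complex $s$-value $-i\rho^\ast$; since these points sit on the lifted orbits of $\psi^+$ and $\psi^-$, their $x,y$-components are precisely $\psi^+(-i\rho^\ast)$ and $\psi^-(-i\rho^\ast)$, and subtraction yields (\ref{eq:delta-rho-bound}). The main obstacle is verifying that the Poincaré maps $\mathcal{P}^\pm$ are actually well-defined and continuous on all of $\mathbf{p}^\pm$, $\mathbf{p}_l^+$, $\mathbf{p}_u^+$ — that every orbit starting in these sets reaches $\{s_1 = 0\}$ transversally in finite time without escaping the domain. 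This is where the concrete bound $M_0$ and the lower bound on $\rho_\ast$ enter: they keep $|\psi|$ and $s^2|\psi|^2$ small enough along the tube that $s_1'$ stays uniformly positive and the orbit remains in the analyticity domain of $\psi^\pm$. Once that has been secured, continuity is standard and the Bolzano step closes the proof.
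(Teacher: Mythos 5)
Your proof is correct and takes essentially the same route as the paper: locate the true orbits $\psi^\pm$ inside the tubes $\mathbf{p}^\pm$ via the $M_0$ bounds, apply Bolzano's intermediate value theorem to the $\pi_{s_2}$-component of the difference of Poincaré-map images as $s^+$ runs over $\mathbf{s}^+$, and read off $\Delta\psi(-i\rho^\ast)$ from the $x,y$-components. If anything, your write-up is the more careful of the two: the paper's displayed formulas contain a couple of typos (writing $\psi^-(s^+)$ where $\psi^+(s^+)$ is meant, and stating the full Poincaré-map images coincide when only the $s$-components are equated by Bolzano), and your remark that the well-definedness and transversality of $\mathcal{P}^\pm$ on the tubes must be secured is exactly the point the paper addresses by explicitly validating domain membership during the interval-arithmetic integration.
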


\begin{proof}
From (\ref{eq:psi-minus-M0-bound}--\ref{eq:psi-plus-M0-bound}) we see that
\begin{align*}
\psi^{-}\left(  s^{-}\right)    & \in\pi_{x,y}\mathbf{p}^{-},\\
\psi^{+}\left(  \mathbf{s}^{+}\right)    & \subset\pi_{x,y}\mathbf{p}^{+}.
\end{align*}
By the Bolzano theorem applied to
\[
\mathbf{s}^{+}\ni s^{+}\mapsto\pi_{s_{2}}\left(  \mathcal{P}^{+}\left(
\psi^{-}\left(  s^{+}\right)  ,s^{+}\right)  -\mathcal{P}^{-}\left(  \psi
^{-}\left(  s^{-}\right)  ,s^{-}\right)  \right)
\]
from (\ref{eq:Bolzano-ineq}) we see that there there exists a $s_{\ast}^{+}%
\in\mathbf{s}^{+}$ such that%
\[
\mathcal{P}^{+}\left(  \psi^{-}\left(  s_{\ast}^{+}\right)  ,s_{\ast}%
^{+}\right)  =\mathcal{P}^{-}\left(  \psi^{-}\left(  s^{-}\right)
,s^{-}\right)  .
\]
By definition of $\mathcal{P}^{\pm}$ we know that $\pi_{s}\mathcal{P}^{\pm}\left(  q\right)  \in\left\{  0\right\}
\times\mathbb{R}$, so%
\[
\pi_{s}\mathcal{P}^{+}\left(  \psi^{-}\left(  s_{\ast}^{+}\right)  ,s_{\ast
}^{+}\right)  =\pi_{s}\mathcal{P}^{-}\left(  \psi^{-}\left(  s^{-}\right)
,s^{-}\right)  =\left(  0,-\rho^{\ast}\right)  ,
\]
for some $\rho^{\ast}>0$ (the sign follows from (\ref{eq:Bolzano-ineq})) and
hence%
\begin{align*}
\psi^{-}\left(  -i\rho^{\ast}\right)    & =\pi_{x,y}\mathcal{P}^{-}\left(
\psi^{-}\left(  s^{-}\right)  ,s^{-}\right)  \in\pi_{x,y}\mathcal{P}%
^{-}\left(  \mathbf{p}^{-}\right)  ,\\
\psi^{+}\left(  -i\rho^{\ast}\right)    & =\pi_{x,y}\mathcal{P}^{-}\left(
\psi^{-}\left(  s_{\ast}^{-}\right)  ,s_{\ast}^{-}\right)  \in\pi
_{x,y}\mathcal{P}^{+}\left(  \mathbf{p}^{+}\right)  ,
\end{align*}
which implies (\ref{eq:delta-rho-bound}), as required.
\end{proof}

In our computer assisted proof we have taken $\bar{\rho}:=$%
\texttt{16.00008679} and
\begin{align}
s^{-}  & =\left(  -10^{3},-\bar{\rho}\right)  , \label{eq:s-minus}\\
s_{u}^{+}  & =\left(  10^{3},-\bar{\rho}+10^{-6}\right)  ,\\
s_{l}^{+}  & =\left(  10^{3},-\bar{\rho}-10^{-6}\right) \label{eq:s-plus-l} .
\end{align}
(the choice of $\bar{\rho}$ is dictated by the fact that then $\rho^{\ast
}\approx16$; see (\ref{eq:rho-ex1})). Then, we have  validated that, with such choice of
$s^{-},s_{u}^{+},s_{l}^{+}$, Lemma \ref{lem:interval-tool} leads to the bound
(\ref{eq:Delta-ex1}). The computation of $\mathcal{P}^{+}\left(
\mathbf{p}^{+}\right)  ,\mathcal{P}^{-}\left(  \mathbf{p}^{-}\right)  $
required a long integration time, due to the number $10^{3}$ in our choice of
$s^{-}_1,s_1^{+}$. The benefit of such large value in $%
%TCIMACRO{\TeXButton{Re}{\Re}}%
%BeginExpansion
\Re
%EndExpansion
s$ is that then $\left\vert
%TCIMACRO{\TeXButton{Re}{\Re}}%
%BeginExpansion
\Re
%EndExpansion
s\right\vert ^{-3}$ is a very small number, leading to small sets
$\mathbf{p}^{\pm},\mathbf{p}_{l}^{+},\mathbf{p}_{u}^{+}$. This results in good
bounds on $\Delta\psi$. Such choice of $%
%TCIMACRO{\TeXButton{Re}{\Re}}%
%BeginExpansion
\Re
%EndExpansion
s$ was reached by trial and error.

The computer assisted validation of (\ref{eq:Delta-ex1}) took under 20
seconds, running on a single thread of a standard laptop.

The computation of $\Delta\psi$ in the second example also follows from Lemma
\ref{lem:interval-tool}. The only difference is the formula for the vector
field, which is%
\begin{align*}
x_{1}^{\prime} &  =\frac{s_{1}}{s_{1}^{2}+s_{2}^{2}}x_{1}+\left(  \alpha
+\frac{s_{2}}{s_{1}^{2}+s_{2}^{2}}\right)  y_{1}-\frac{s_{1}^{3}-3s_{1}%
s_{2}^{2}}{\left(  s_{1}^{2}+s_{2}^{2}\right)  ^{3}}\\
&  +\frac{1}{s_{1}^{2}+s_{2}^{2}}\left(  s_{2}\left(  x_{1}^{2}-y_{1}%
^{2}-x_{2}^{2}+y_{2}^{2}\right)  +2s_{1}\left(  x_{2}y_{2}-x_{1}y_{1}\right)
\right)  ,\\
y_{1}^{\prime} &  =-\left(  \alpha+\frac{s_{2}}{s_{1}^{2}+s_{2}^{2}}\right)
x_{1}+\frac{s_{1}}{s_{1}^{2}+s_{2}^{2}}y_{1}-\frac{s_{2}^{3}-3s_{1}^{2}s_{2}%
}{\left(  s_{1}^{2}+s_{2}^{2}\right)  ^{3}}\\
&  +\frac{1}{s_{1}^{2}+s_{2}^{2}}\left(  s_{1}\left(  x_{1}^{2}-x_{2}%
^{2}-y_{1}^{2}+y_{2}^{2}\right)  +2s_{2}\left(  x_{1}y_{1}-x_{2}y_{2}\right)
\right)  ,\\
x_{2}^{\prime} &  =\frac{s_{1}}{s_{1}^{2}+s_{2}^{2}}x_{2}-\left(  \alpha
-\frac{s_{2}}{s_{1}^{2}+s_{2}^{2}}\right)  y_{2}-\frac{s_{1}^{3}-3s_{1}%
s_{2}^{2}}{\left(  s_{1}^{2}+s_{2}^{2}\right)  ^{3}}\\
&  +\frac{1}{s_{1}^{2}+s_{2}^{2}}\left(  s_{2}\left(  x_{1}^{2}-y_{1}%
^{2}-x_{2}^{2}+y_{2}^{2}\right)  +2s_{1}\left(  x_{2}y_{2}-x_{1}y_{1}\right)
\right)  ,\\
y_{2}^{\prime} &  =\left(  \alpha-\frac{s_{2}}{s_{1}^{2}+s_{2}^{2}}\right)
x_{2}+\frac{s_{1}}{s_{1}^{2}+s_{2}^{2}}y_{2}-\frac{s_{2}^{3}-3s_{1}^{2}s_{2}%
}{\left(  s_{1}^{2}+s_{2}^{2}\right)  ^{3}}\\
&  +\frac{1}{s_{1}^{2}+s_{2}^{2}}\left(  s_{1}\left(  x_{1}^{2}-x_{2}%
^{2}-y_{1}^{2}+y_{2}^{2}\right)  +2s_{2}\left(  x_{1}y_{1}-x_{2}y_{2}\right)
\right)  ,\\
s_{1}^{\prime} &  =1-2bs_{1}s_{2}\left(  x_{1}y_{2}+x_{2}y_{1}\right)
+b\left(  s_{1}^{2}-s_{2}^{2}\right)  \left(  x_{1}x_{2}-y_{1}y_{2}\right)
,\\
s_{2}^{\prime} &  =2bs_{1}s_{2}\left(  x_{1}x_{2}-y_{1}y_{2}\right)  +b\left(
s_{1}^{2}-s_{2}^{2}\right)  \left(  x_{1}\allowbreak y_{2}+x_{2}y_{1}\right)
.
\end{align*}

We take the same $s^{-},s_{l}^{+},s_{u}^{+}$ as in (\ref{eq:s-minus}%
--\ref{eq:s-plus-l}) which, with the aid of Lemma \ref{lem:interval-tool} and
interval arithmetic integration, leads to the bounds (\ref{eq:rho-ex2}%
--\ref{eq:Delta-ex2}).

The computer assisted validation of (\ref{eq:Delta-ex2}) took under 25 seconds, running on
a single thread of a standard laptop.
\bibliography{HopfZeroEstimatexbiblio.bib}

\def\cprime{$'$}
\begin{thebibliography}{BGG21b}

\bibitem[BCS13]{BaCaSe13}
I.~Baldom\'{a}, O.~Castej\'{o}n, and T.~M. Seara.
\newblock Exponentially small heteroclinic breakdown in the generic {H}opf-zero
  singularity.
\newblock {\em J. Dynam. Differential Equations}, 25(2):335--392, 2013.

\bibitem[BCS18a]{MR3846870}
I.~Baldom\'{a}, O.~Castej\'{o}n, and T.~M. Seara.
\newblock Breakdown of a 2{D} heteroclinic connection in the {H}opf-zero
  singularity ({I}).
\newblock {\em J. Nonlinear Sci.}, 28(5):1551--1627, 2018.

\bibitem[BCS18b]{MR3817789}
I.~Baldom\'{a}, O.~Castej\'{o}n, and T.~M. Seara.
\newblock Breakdown of a 2{D} heteroclinic connection in the {H}opf-zero
  singularity ({II}): the generic case.
\newblock {\em J. Nonlinear Sci.}, 28(4):1489--1549, 2018.

\bibitem[BGG21a]{BaldomaGG21a}
I.~Baldom\'a, M.~Giralt, and M.~Guardia.
\newblock Breakdown of homoclinic orbits to {L}3 in the {RPC3BP} {(I)}. complex
  singularities and the inner equation.
\newblock Available at \url{https://arxiv.org/abs/2107.09942}, 2021.

\bibitem[BGG21b]{BaldomaGG21b}
I.~Baldom\'a, M.~Giralt, and M.~Guardia.
\newblock Breakdown of homoclinic orbits to {L}3 in the {RPC3BP} {(II)}. an
  asymptotic formula.
\newblock Available at \url{https://arxiv.org/abs/2107.09941}, 2021.

\bibitem[BIS20]{BaIbSe20}
I.~Baldom\'{a}, S.~Ib\'{a}nez, and T.~M. Seara.
\newblock Hopf-zero singularities truly unfold chaos.
\newblock {\em Commun. Nonlinear Sci. Numer. Simul.}, 84:105162, 19, 2020.

\bibitem[BS08]{BaSe08}
I.~Baldom{\'a} and T.M. Seara.
\newblock {The inner equation for generic analytic unfoldings of the Hopf-zero
  singularity}.
\newblock {\em Discrete Contin. Dyn. Syst., Ser. B}, 10(2-3):323--347, 2008.

\bibitem[BV84]{BV84}
H.W. Broer and G.~Vegter.
\newblock {Subordinate \v{S}il'nikov bifurcations near some singularities of
  vector fields having low codimension.}
\newblock {\em Ergodic Theory Dyn. Syst.}, 4:509--525, 1984.

\bibitem[CZ17]{MR3567489}
M.~J. Capi\'{n}ski and P.~Zgliczy\'{n}ski.
\newblock Beyond the {M}elnikov method: a computer assisted approach.
\newblock {\em J. Differential Equations}, 262(1):365--417, 2017.

\bibitem[CZ18]{MR3843291}
M.~J. Capi\'{n}ski and P.~Zgliczy\'{n}ski.
\newblock Beyond the {M}elnikov method {II}: {M}ultidimensional setting.
\newblock {\em J. Differential Equations}, 265(9):3988--4015, 2018.

\bibitem[GGSZ21]{GomideGSZ21}
O.~Gomide, M.~Guardia, T.M. Seara, and C.~Zeng.
\newblock On small breathers of nonlinear {K}lein-{G}ordon equations via
  exponentially small homoclinic splitting.
\newblock Available at \url{https://arxiv.org/abs/2107.14566}, 2021.

\bibitem[GH90]{GH90}
J.~Guckenheimer and P.~Holmes.
\newblock {\em Nonlinear oscillations, dynamical systems, and bifurcations of
  vector fields}, volume~42.
\newblock Springer Verlag, 1990.

\bibitem[GL14]{GelfreichL14}
V.~Gelfreich and L.~Lerman.
\newblock Separatrix splitting at a {H}amiltonian {$0^2i\omega$} bifurcation.
\newblock {\em Regul. Chaotic Dyn.}, 19(6):635--655, 2014.

\bibitem[Guc81]{Guc81}
J.~Guckenheimer.
\newblock On a codimension two bifurcation.
\newblock {\em Dynamical Systems and Turbulence, Warwick 1980}, pages 99--142,
  1981.

\bibitem[Laz03]{Lazutkin84}
V.~F. Lazutkin.
\newblock Splitting of separatrices for the {C}hirikov standard map.
\newblock {\em Zap. Nauchn. Sem. S.-Peterburg. Otdel. Mat. Inst. Steklov.
  (POMI)}, 300(Teor. Predst. Din. Sist. Spets. Vyp. 8):25--55, 285, 2003.

\bibitem[Mel63]{Melnikov63}
V.~K. Melnikov.
\newblock On the stability of the center for time periodic perturbations.
\newblock {\em Trans. Moscow Math. Soc.}, 12:1--57, 1963.

\bibitem[SK87]{KS87}
H.~Segur and M.~D. Kruskal.
\newblock Nonexistence of small-amplitude breather solutions in
  ${\mathrm{phi}}^{4}$ theory.
\newblock {\em Phys. Rev. Lett.}, 58:747--750, Feb 1987.

\bibitem[Sn70]{Shil70}
L.~P. Sil\cprime~nikov.
\newblock On the question of the structure of an extended neighborhood of a
  structurally stable state of equilibrium of saddle-focus type.
\newblock {\em Mat. Sb. (N.S.)}, 81 (123):92--103, 1970.

\bibitem[Tre97]{Treshev97}
D.~Treschev.
\newblock Separatrix splitting for a pendulum with rapidly oscillating
  suspension point.
\newblock {\em Russ. J. Math. Phys.}, 5(1):63--98, 1997.

\end{thebibliography}
\bibliographystyle{alpha}
\end{document}